\newtheorem{thm}{Theorem}[section]
\newtheorem{lem}{Lemma}[section]
\newtheorem{cor}[thm]{Corollary}
\newtheorem{prop}[lem]{Proposition}
\newtheorem{rem}[lem]{Remark}
\newtheorem{ob}[lem]{Observation}
\newtheorem{defn}[lem]{Definition}
\def\R{\mathbb{R}}
\def\C{\mathbb{C}}
\def\Im{\mathrm{Im}}
\def\pat{\partial}
\def\patt{\tilde{\partial}}
\def\S{\operatorname{S}}
\def\H{\mathcal{H}}
\def\T{\operatorname{T}}
\def\Dom{\mathrm{Dom}}
\def\gt{\tilde{g}_T}
\def\td{\tilde{d}_T}
\def\Crit{\mathrm{Crit}}
\def\tPhi{\tilde{\Phi}^t}
\begin{document}
\def\o{\omega}
\def\O{\Omega}

\hypersetup{hypertex=true,
            colorlinks=true,
            linkcolor=blue,
            anchorcolor=blue,
            citecolor=blue}
\title{Witten deformation for noncompact manifolds with bounded geometry}

\author{Xianzhe Dai\thanks{Department of Mathematics, UCSB, Santa Barbara CA 93106, dai@math.ucsb.edu. \ \ \ \  \ \  \
		Partially supported by the Simons Foundation}
	\and Junrong Yan\footnote{Department of Mathematics,  UCSB, Santa Barbara CA 93106, j\_yan@math.ucsb.edu}
}

\maketitle

\abstract{ Motivated by the Landau-Ginzburg model,
we study the Witten deformation on a noncompact manifold with bounded geometry, together with some tameness condition on the  growth of the Morse function $f$ near infinity.
We prove that the cohomology of the Witten
deformation $d_{Tf}$ acting on the complex of smooth $L^2$ forms is isomorphic to the cohomology of Thom-Smale complex of $f$ as well as the relative cohomology of a certain pair $(M, U)$ for sufficiently large $T$.  We establish an Agmon estimate for eigenforms of the Witten Laplacian which plays an essential role in identifying these cohomologies via 
 Witten's instanton complex, defined in terms of eigenspaces of the Witten Laplacian for small eigenvalues. As an application we obtain the strong Morse inequalities in this setting.
}

\section{Introduction}

\subsection{Overview}
In the extremely influential paper \cite{witten1982supersymmetry}, Witten introduced a deformation of the de Rham complex by considering the new differential $d_f=d+df,$ where $d$ is the usual exterior derivative on forms, and $f$ is a Morse function. Setting  $$d_{Tf}:=d+Tdf, $$ Witten observed that when $T>0$ is large enough, the eigenfunctions of the small eigenvalues for the
corresponding deformed Hodge-Laplacian, the so called Witten Laplacian,  concentrate at the critical
points of $f$. As a result,  Witten deformation builds a direct bridge between the Betti numbers and the Morse indices of the critical points of $f$.

Witten deformation on closed manifolds has produced a whole range of beautiful applications, from Demailly's holomorphic Morse inequalities\cite{demailly1985holmorse}, to the proof of Ray-Singer conjecture and its generalization by Bismut-Zhang \cite{bismutzhang1992cm}, to the instigation of the development of Floer homology theory.

Although the Witten deformation on noncompact manifolds are much less studied and understood, there are previous interesting work in the direction. In \cite{dimcasaito93polymap} the cohomology of an affine algebraic variety is related to that of the Witten complex of $\mathbb C^m$, see also \cite{farbershustin00polyform} for further development.

This paper is motivated by the study of Landau-Ginzburg models (c.f.\cite{hori2003mirror}), which, according to Witten \cite{witten1993phases},  are simply different phase of Calabi-Yau manifolds, and hence equivalent to Calabi-Yau manifolds. Suppose there is a non-trivial holomorphic function $W$ (the superpotential) on a noncompact Kahler manifold $M^n$ ($n=\dim _{\mathbb C} M$), then one considers the Witten-deformation of $\partial$ operator:
\[\bar\partial_W=\bar\partial-\frac{i}{2}\partial W,\]
as its cohomology describes the quantum ground states of the Landau-Ginzburg model $(M, W)$.
If $W$ is also a Morse function with $k$ critical points, then complex Morse theoretic consideration leads to the expectation that
\[H^l_{\bar\partial_W}(M)=\begin{cases}
\C^k, \mbox{ if } l=n\\
0, \mbox{ otherwise}.
\end{cases}\]
For the mathematical study of LG models and their significant applications we point out the important work \cite{fanjarvisruan2013quantum}.

In this paper, we consider the more general case for Riemannian manifolds: we explore the relations between the Thom-Smale complex for a Morse function $f$ on a noncompact manifold $M$ and the deformed de Rham complex with respect to $f$.  The first difficulty one encounters here is the presence of continuous spectrum on a noncompact manifolds and for that one has to impose certain tameness conditions. This consists of the bounded geometry requirement for the manifold as well as growth conditions for the function. The notion of strong tameness is introduced in \cite{fan2011schr} in the K\"ahler setting which guarantees the discreteness of the spectrum for the Witten Laplacian. Here we introduce a slightly weaker notion which allows continuous spectrum but only outside a large interval starting from $0$.

It is important to note that, and this is another new phenomenon in the noncompact case,   the Thom-Smale complex may not be  a complex in general. Namely, the square of its boundary operator need not be zero, since $M$ is noncompact. However we prove that with the tameness condition, it is. 

The crucial technical  part of our work is the Agmon estimate for eigenforms of the Witten Laplacian which is essential in extending the usual analysis from compact setting to the noncompact case.  The Agmon estimate was discovered by S. Agmon in his study of $N$-body
Schr\"odinger operators in the Euclidean setting and has found many important applications. The exponential decay of the eigenfunction is expressed in terms of the so-called Agmon distance, Cf. \cite{agmon2014lectures}. We make essential use of this Agmon estimate to carry out the isomorphism between the Witten instanton complex defined in terms of eigenspaces corresponding to the small eigenvalues with the Thom-Smale complex defined in terms of the critical point data of the function.  We remark that the Agmon estimate near the critical points  also plays important role in the compact case , see,  e.g. \cite{bismutzhang1992cm}. The novelty here is that we make essential use of  the exponential decay at spatial infinity provided by the Agmon estimate.

As an application of our results on noncompact manifolds, we deduce corresponding results for manifolds with boundaries which generalize recent work of \cite{laudenbach2011morse}, \cite{lu2017thomsmale}.

Finally we would also like to point out the preprint \cite{fanfang2016torsion} which has provided further motivation and inspiration for us. 

In the rest of the introduction we give precise statements of our main results after setting up our notations. In subsequent work we will develop the local index theory and the Ray-Singer torsion for the Witten deformation in the noncompact setting.
\newline

{\em Acknowledgment:  We thank Shu Shen for interesting discussions and for providing us with an example of Thom-Smale complex not being a complex.
}

\subsection{Notations and basic setup}
\numberwithin{equation}{section}
Let $(M,g)$ be a noncompact connected complete Riemannian manifold with metric
$g$. $(M,g)$ is said to have bounded geometry, if the following
conditions hold:
\begin{enumerate}
\item the injectivity radius $r_0$ of $M$ is positive.
\item $|\nabla^m R|\le C_m$, where $\nabla^m R$ is the $m$-th
covariant derivative of the curvature tensor and $C_m$ is a constant
only depending on $m$.
\end{enumerate}

On such a manifold, the Sobolev constant is uniformly bounded, see e.g. \cite{cgt}.
Now let $f:M\mapsto\R$ be a smooth function. In \cite{fan2011schr},  the notion of strong tameness for the triple
$(M,g,f)$ is introduced.

\begin{defn} The triple $(M,g,f)$ is said to be strongly tame, if $(M, g)$ has bounded geometry and
\begin{equation*}
\lim\sup_{p\to\infty}\frac{|\nabla^2 f|(p)}{|\nabla f|^2(p)}=0,
\end{equation*}
and
\[\lim_{p\to\infty}|\nabla f|\to\infty,\]
where $\nabla f, \nabla^2 f$ are the gradient and Hessian of $f$ respectively.
\end{defn}

\begin{rem}\label{rem}
	
	Fix $p_0\in M$, and let $d$ be the distance function induced by $g$.   Here $p\to\infty$ simply means that $d(p,p_0)\to \infty.$
\end{rem}

In this paper we only need the following weaker condition.
\begin{defn} The triple $(M,g,f)$ is said to be well tame, if $(M, g)$ has bounded geometry and
\[
c_f:=\lim\sup_{p\to\infty}\frac{|\nabla^2 f|(p)}{|\nabla f|^2(p)}<\infty,
\]
and
\[
\epsilon_f:=\lim\inf_{p\to\infty}|\nabla f|>0. \]

\end{defn}

\def\Dom{\mathrm{Dom}}
As usual,  the metric $g$ induced a canonical metric (still denote it by $g$) on $\Lambda^*(M)$, which then defines an inner product $(\cdot,\cdot)_{L^2}$ on $\Omega^*_c(M)$:
\[(\phi,\psi)_{L^2}=\int_M (\phi, \psi)_gdvol,\phi,\psi\in \Omega^*_c(M). \]

Let $L^2\Lambda^*(M)$ be the completion of $\Omega_c^*(M)$ with respect to  $\|\cdot\|_{L^2}$, and for simplicity, we denote $L^2(M):=L^2\Lambda^0(M).$

For any $T\geq 0$, let $d_{Tf}:=d+Tdf\wedge:\Omega^*(M)\mapsto \Omega^{*+1}(M)$ be the so-called Witten deformation of de Rham operator $d$.  It is an unbounded operator on $L^2\Lambda^*(M)$ with domain $\Omega^*_c(M)$. Also, $d_{Tf}$ has a formal adjoint operator $\delta_{Tf}$, with $\Dom(\delta_{Tf})=\Omega^*_c(M),$ such that
\[(d_{Tf}\phi,\psi)_{L^2}=(\phi,\delta_{Tf}\psi)_{L^2},\phi,\psi\in\Omega^*_c(M).\]

Set $\Delta_{H,Tf}=(d_{Tf}+\delta_{Tf})^2,$ and we denote the Friedrichs extension of $\Delta_{H,Tf}$ by $\Box_{Tf}$. As we will see (Theorem \ref{esa}), if $(M, g, f)$ is well tame,  then $\Delta_{H,Tf}$ is essentially self-adjoint (and hence  $\Box_{Tf}$ is the unique self-adjoint extension).
In Section \ref{l2}, we will prove the Hodge-Kodaira decomposition when $(M, g, f)$ is well tame and $T$  large enough,
\begin{equation}\label{dec}L^2\Lambda^*(M)=\ker\Box_{Tf}\oplus\Im \bar{d}_{Tf}\oplus\Im \bar\delta_{Tf},\end{equation}
where $\bar{d}_{Tf}$ and $\bar\delta_{Tf}$ are the graph extensions of $d_{Tf}$ and $\delta_{Tf}$ respectively.

Setting  $\Omega_{(2)}^*(M):=L^2\Lambda^*(M)\cap\Omega^*(M),$
we have a chain complex (of unbounded operators)
\[\cdots \xrightarrow{d_{Tf}}\Omega_{(2)}^*(M)\xrightarrow{d_{Tf}}\Omega_{(2)}^{*+1}(M)\xrightarrow{d_{Tf}}\cdots.\]
Let $H^*_{(2)}(M,d_{Tf})$ denote the cohomology of this complex. In Section \ref{l2}, we will show that $H^*_{(2)}(M,d_{Tf})\cong\ker\Box_{Tf}$, provided $(M, g, f)$ is well tame and $T$ is large enough.

Finally we note the following well known (Cf. \cite{witten1982supersymmetry, zhang2001lectures})

\begin{prop}\label{estihdg}
 The Hodge Laplacian  
$\Delta_{H,Tf}$ has the
following local expression:
\begin{equation}
\Delta_{H,Tf}=\Delta-T \nabla^2_{e_i,e_j}f[e^i\wedge,\iota_{e_j}]+T^2|\nabla f|^2.
\end{equation}
Here $\{e_i\}$ is a local frame on $TM$ and $\{e^i\}$ is the dual frame on $T^*M.$
\end{prop}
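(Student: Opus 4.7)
The plan is to verify this purely algebraic identity by direct expansion of $\Delta_{H,Tf}=(d_{Tf}+\delta_{Tf})^2$, which reduces to routine bookkeeping in a local orthonormal frame. The key point is that $d_{Tf}^{\,2}=0$ and $\delta_{Tf}^{\,2}=0$, so
\begin{equation*}
\Delta_{H,Tf}=d_{Tf}\delta_{Tf}+\delta_{Tf}d_{Tf}.
\end{equation*}

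First I would compute the formal adjoint. Since for any vector field $X$ one has $(X^\flat\wedge)^*=\iota_X$, and $df=(\nabla f)^\flat$, we get $\delta_{Tf}=\delta+T\iota_{\nabla f}$. Plugging this and $d_{Tf}=d+Tdf\wedge$ into the expansion yields
\begin{equation*}
\Delta_{H,Tf}=\Delta+T\bigl(\{d,\iota_{\nabla f}\}+\{\delta,df\wedge\}\bigr)+T^2\bigl(\{df\wedge,\iota_{\nabla f}\}\bigr),
\end{equation*}
using that $\{d,df\wedge\}=0$ (because $d^2=0$ and $d(df)=0$) and dually $\{\delta,\iota_{\nabla f}\}=0$.

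The $T^2$ term is immediate: $\{df\wedge,\iota_{\nabla f}\}=|df|^2=|\nabla f|^2$, since at each point one can pick an orthonormal frame with $e_1=\nabla f/|\nabla f|$. For the linear-in-$T$ term, I would work at an arbitrary point $p\in M$ in a local orthonormal frame $\{e_i\}$ with $\nabla_{e_i}e_j|_p=0$, so that $d=e^i\wedge\nabla_{e_i}$ and $\delta=-\iota_{e_i}\nabla_{e_i}$ at $p$. Writing $\iota_{\nabla f}=(\nabla_{e_j}f)\iota_{e_j}$ and $df\wedge=(\nabla_{e_j}f)e^j\wedge$, and using the Leibniz rule on $\nabla_{e_i}$, a direct computation shows that the pieces containing $\nabla_{e_j}f$ times a covariant derivative combine into $\pm\nabla_{\nabla f}$ with opposite signs between $\{d,\iota_{\nabla f}\}$ and $\{\delta,df\wedge\}$, and thus cancel. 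The remaining contributions come from differentiating $\nabla f$, producing the Hessian, and after using symmetry of $\nabla^2f$ to relabel indices one obtains the commutator combination $\nabla^2_{e_i,e_j}f\,[e^i\wedge,\iota_{e_j}]$ (up to the overall sign fixed by conventions on $\delta$ and on the bracket).

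The only real obstacle is keeping the sign conventions consistent: the sign of $\delta$ as a local operator, the sign in the definition of the Hessian $\nabla^2_{e_i,e_j}f$, and whether $[\cdot,\cdot]$ denotes commutator or graded commutator. Once these conventions are fixed to match the statement, the identity is a line of index manipulation. Since $p$ was arbitrary and the formula is tensorial, the expression is valid globally on $M$, proving the proposition.
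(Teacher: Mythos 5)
The paper does not actually prove this proposition; it states it as well known and cites \cite{witten1982supersymmetry, zhang2001lectures}. Your direct expansion of $(d_{Tf}+\delta_{Tf})^2$ is precisely the standard derivation, and the structure of your argument is sound: the reduction to $\Delta+T\bigl(\{d,\iota_{\nabla f}\}+\{\delta,df\wedge\}\bigr)+T^2\{df\wedge,\iota_{\nabla f}\}$, the identification of the quadratic term with $|\nabla f|^2$, and the normal-frame computation of the linear term via Cartan's formula $\{d,\iota_X\}=L_X$ and its adjoint.

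That said, you should not leave the sign of the linear term as ``fixed by conventions,'' because all the relevant conventions are already pinned down by the statement: $\delta_{Tf}$ is the $L^2$-adjoint of $d_{Tf}$, $\nabla^2 f$ is the ordinary Hessian, and $[\cdot,\cdot]$ the ordinary commutator. Carrying your computation through, one finds $\{d,\iota_{\nabla f}\}+\{\delta,df\wedge\}=L_{\nabla f}+L_{\nabla f}^{*}=+\,\nabla^2_{e_i,e_j}f\,[e^i\wedge,\iota_{e_j}]$ with a \emph{plus} sign, whereas the displayed formula has a minus. A quick sanity check exposes this: applying both sides to the constant function $1$, the left side gives $\delta_{Tf}(T\,df)=T\,\delta df+T^2|\nabla f|^2=-T\,\mathrm{tr}(\nabla^2 f)+T^2|\nabla f|^2$, while $[e^i\wedge,\iota_{e_j}]\,1=-\delta_{ij}$, so the right side as printed evaluates to $+T\,\mathrm{tr}(\nabla^2 f)+T^2|\nabla f|^2$, which disagrees. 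The plus sign is also what appears in the cited source \cite{zhang2001lectures}. So your computation, done to the end, will produce the formula with $+T$; the minus sign in the statement is a typo (it is harmless for the rest of the paper, which only uses a pointwise bound $\bigl|\nabla^2_{e_i,e_j}f\,[e^i\wedge,\iota_{e_j}]\bigr|\le C|\nabla^2f|$ that is insensitive to the sign). Your proof should report the plus sign, not be contorted to match the printed minus.
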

\subsection{Main results}
In this subsection, we assume that $(M,g)$ has bounded geometry, $f$ is a Morse function with finite many critical points. 
Clearly this will be the case if $(M,g,f)$ is well tame and $f$ is Morse.

As we mentioned the main technical result here is the Agmon estimate for the eigenforms of the Witten Laplacian.

\begin{thm}\label{est}
Let $(M,g,f)$ be well tame, and $\o\in\Dom(\Box_{Tf})$ be an eigenform of $\Box_{Tf}$ whose eigenvalue is uniformly bounded in $T$. Then
\[|\o(p)|\leq CT^{(n+2)/2}\exp(-a\rho_T(p))\|\omega\|_{L^2},\]
for any $a\in(0,1)$ (provided $T$ is sufficiently large and $C$ is a constant depending on the dimension $n$, the function $f$, the curvature bound, and $a$; for the precise choice of $T, C$ see the end of Section 3). Here the definition of the Agmon distance $\rho_T(p)$ will be given in Section \ref{sec}. 
\end{thm}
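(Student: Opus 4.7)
\emph{Overview.} The plan is the classical two-step Agmon argument, adapted to the bounded-geometry noncompact setting. First I would prove an $L^2$-weighted estimate
$\|e^{a\rho_T}\omega\|_{L^2}\le C\|\omega\|_{L^2}$,
and then upgrade it to the pointwise bound using the uniform local Sobolev embedding afforded by bounded geometry.

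\emph{$L^2$ stage.} Starting from the eigenvalue equation $\Box_{Tf}\omega=\lambda\omega$ with $|\lambda|\le C_0$ uniform in $T$, I would pair both sides against $\chi_R^2 e^{2a\tilde\rho_T}\omega$, where $\chi_R$ is a smooth cutoff supported in a large geodesic ball $B_R(p_0)$ and $\tilde\rho_T$ is a mollification of the Lipschitz Agmon distance $\rho_T$. Using the pointwise expression of Proposition~\ref{estihdg} and the standard Agmon integration by parts on $(\Box_{Tf}\omega,\chi_R^2 e^{2a\tilde\rho_T}\omega)_{L^2}$, one obtains an identity of the form
\begin{equation*}
\|\nabla(\chi_R e^{a\tilde\rho_T}\omega)\|_{L^2}^2 + \int \chi_R^2 e^{2a\tilde\rho_T}\bigl(T^2|\nabla f|^2 - a^2|\nabla\tilde\rho_T|^2 - CT|\nabla^2 f| - \lambda\bigr)|\omega|^2 \le E_R,
\end{equation*}
with $E_R$ a boundary term on $\spt(d\chi_R)$. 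Since $\rho_T$ is the distance induced by the degenerate metric $T^2|\nabla f|^2 g$, one has $|\nabla\tilde\rho_T|^2\le (1+o(1))T^2|\nabla f|^2$ as the mollification is refined, so that the first two terms in the parenthesis combine to $\ge (1-a^2)T^2|\nabla f|^2$. Outside some fixed compact set $K$ the well-tame hypotheses $|\nabla^2 f|\le c_f|\nabla f|^2$ and $|\nabla f|\ge\epsilon_f>0$ then force the entire integrand to be positive and of order $T^2$ once $T$ is large enough (concretely $(1-a^2)T\ge 2Cc_f$ and $T^2\epsilon_f^2\gg|\lambda|$). On $K$ the coefficient is merely bounded and $e^{2a\rho_T}$ is bounded a priori, so the contribution is controlled by $\|\omega\|_{L^2}^2$. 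Sending $R\to\infty$, with a mild bootstrap to ensure $e^{a\rho_T}\omega\in L^2$ in the first place (e.g.\ by first running the estimate for every $a'<a$ and closing up), yields the desired $L^2$-weighted bound.

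\emph{Pointwise stage and main obstacle.} On a geodesic ball $B_r(p)$ of radius $r\sim T^{-1}$ (well inside the uniform injectivity radius), the weight $e^{a\rho_T}$ is comparable to $e^{a\rho_T(p)}$ up to a bounded factor, using $|\nabla\rho_T|\le T|\nabla f|$ and the control on the variation of $|\nabla f|$ over such small balls provided by well-tameness and bounded geometry, so $\|\omega\|_{L^2(B_r(p))}\le C e^{-a\rho_T(p)}\|\omega\|_{L^2}$. The uniform local Sobolev embedding on $B_r$ gives, for any $s>n/2$, a scaled inequality $|\omega(p)|\le C\sum_{j\le s} r^{j-n/2}\|\nabla^j\omega\|_{L^2(B_r(p))}$; interior elliptic regularity applied to $\Box_{Tf}\omega=\lambda\omega$ controls each $\|\nabla^j\omega\|_{L^2(B_r(p))}$ in terms of $\|\omega\|_{L^2(B_{2r}(p))}$ at the cost of a factor of $T$ per derivative order, produced by commutators with the coefficient $T^2|\nabla f|^2$. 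With $r=T^{-1}$, the dominant contribution comes from $j=1$ and scales like $r^{-n/2}\cdot T \cdot r\cdot r^{-n/2}\cdot(\cdots)=T^{(n+2)/2}$, which gives the stated exponent. The hardest part is precisely this bookkeeping: one must simultaneously handle the Lipschitz-only regularity of $\rho_T$ via a mollification compatible with bounded geometry, absorb polynomial factors of $|\nabla f|(p)$ that appear at points where $|\nabla f|(p)$ is very large by slightly decreasing $a$ (exploiting the exponential decay already established in $\rho_T$), and track the precise $T$-powers produced by each commutator with the coefficient of $\Box_{Tf}$, so that the final exponent is indeed no worse than $(n+2)/2$.
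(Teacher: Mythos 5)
Your two-stage plan (weighted $L^2$ estimate, then a local elliptic upgrade) is the right architecture, and your $L^2$ stage is essentially the paper's Lemma~\ref{tech1} and Corollary~\ref{wL2e}. The pointwise stage, however, has a genuine gap in the choice of localization scale.

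You work on geodesic balls $B_r(p)$ of \emph{fixed} radius $r\sim T^{-1}$ and assert that the weight $e^{a\rho_T}$ is comparable to $e^{a\rho_T(p)}$ over $B_r(p)$ up to a bounded factor, invoking $|\nabla\rho_T|\le T|\nabla f|$ and control on the variation of $|\nabla f|$. This fails precisely where the problem is hardest, at spatial infinity. Indeed $|\rho_T(q)-\rho_T(p)|\le r\sup_{B_r(p)}|\nabla\rho_T|\le T^{-1}\cdot T\sup_{B_r(p)}|\nabla f|=\sup_{B_r(p)}|\nabla f|$, so the weight varies by a factor $\exp\bigl(a\sup_{B_r(p)}|\nabla f|\bigr)$. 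Since well-tameness only gives $|\nabla^2 f|\le c_f|\nabla f|^2$, the natural scale at which $|\nabla f|$ has controlled variation is $\sim 1/(c_f|\nabla f|)$, not $T^{-1}$; once $|\nabla f|\gg T$, a ball of radius $T^{-1}$ spans many such scales, so $\sup_{B_r(p)}|\nabla f|$ is effectively uncontrolled and the error factor is exponentially large. This cannot be absorbed by ``slightly decreasing $a$'': by Lemma~\ref{remm} the best one can say is $|\nabla f|(p)\lesssim\exp\bigl(\tfrac{c_f}{2bT}\rho_T(p)\bigr)$, so the offending factor $\exp(a\sup_{B_r}|\nabla f|)$ is a \emph{doubly} exponential function of $\rho_T(p)$ (exponential of an exponential), while decreasing $a$ only buys you a single exponential. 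The same problem contaminates your elliptic regularity step, since the coefficient $T^2|\nabla f|^2$ of $\Box_{Tf}$ is likewise not approximately constant on $B_{T^{-1}}(p)$.

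The repair, which is what the paper actually does, is to localize at a \emph{position-dependent} scale: take $r\sim 1/(T|\nabla f|)$, equivalently work in a unit ball $\tilde B$ of the rescaled Agmon metric $\gt=b^2T^2|\nabla f|^2 g$. Then $\rho_T$ varies only by an $O(1)$ amount over $\tilde B$ (so the weight is comparable), $|\nabla f|$ has controlled multiplicative variation over $\tilde B$ by \eqref{conditionK}, and the price of localization is the Moser/Sobolev factor $r^{-n/2}\sim (T|\nabla f|(p))^{n/2}$. The $|\nabla f|(p)^{n/2}$ is now only a single exponential in $\rho_T(p)$ with a rate $O(c_f/T)$ by Lemma~\ref{remm}, and \emph{this} can be absorbed by passing from $b$ to a slightly smaller $a$. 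Combined with the $T^2$ coming from the weighted $L^2$ bound $I(u)\le CT^2\|u\|^2$ (Corollary~\ref{wL2e}), this accounts exactly for the $T^{(n+2)/2}$. I also note that the paper avoids the form-valued elliptic regularity you sketch by applying Kato's inequality to reduce to the scalar subsolution inequality $\Delta u\le(c+|R|+T|\nabla^2 f|)u$ for $u=|\omega|$ and running a clean Moser iteration (Lemma~\ref{moser}); this sidesteps the delicate commutator bookkeeping you flag as the ``hardest part.''
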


The proof of the Agmon estimate, given in Section \ref{tae}, is to carry out the idea of \cite{agmon2014lectures} in this more general setting.

Set $b_i(T)=\dim H_{(2)}^i(M,d_{Tf})$. If $x$ is a critical point of $f$, denote $n_f(x)$ the Morse index of $f$ at $x.$ Let $m_i$ be the number of critical points of $f$ with Morse index $i.$ Then the strong Morse inequalities hold.
 
\begin{thm}\label{morse}
If $(M,g,f)$ is well tame, then we have the following strong Morse inequality
\[ (-1)^k \sum_{i=0}^k(-1)^i b_i(T)\leq (-1)^k \sum_{i=0}^k(-1)^im_i, \ \ \forall k\leq n,\]
provided $T$ is large enough. And the equality holds for $k=n$.
\end{thm}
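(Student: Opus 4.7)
The plan is to follow the standard strategy of Witten's Morse theory, suitably adapted to the noncompact setting, with the Agmon estimate (Theorem \ref{est}) serving as the key tool to control behavior at spatial infinity. The proof proceeds in three main steps.

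First, I would establish a spectral gap: for $T$ large, the spectrum of $\Box_{Tf}$ in the interval $[0,1]$ consists of finitely many eigenvalues, all $O(e^{-cT})$, separated from the rest by a gap of size at least $cT$. Outside a large ball the local expression in Proposition \ref{estihdg} gives $\Delta_{H,Tf} \geq T^2|\nabla f|^2 - CT|\nabla^2 f|$, and well-tameness ($\epsilon_f>0$, $c_f<\infty$) forces this to tend to $+\infty$ with $T$, so the essential spectrum sits arbitrarily high. Near each critical point $x$ of Morse index $i$, after the rescaling $y = T^{1/2}(\cdot - x)$ in normal coordinates, $\Box_{Tf}$ becomes $T$ times a perturbation of the standard harmonic oscillator, whose kernel in degree $i$ is one-dimensional at each such critical point and whose nonzero spectrum starts at a positive constant. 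Min-max/IMS localization with cutoffs near the critical points, combined with the exponential decay at infinity from Theorem \ref{est} to control cross terms, then shows that the number of eigenvalues of $\Box_{Tf}$ in $[0,1]$ acting on $i$-forms equals exactly $m_i$.

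Second, I would define the Witten instanton complex $F_T^i := \bigoplus_{\lambda \in [0,1]} \ker(\Box_{Tf} - \lambda)|_{\Omega^i_{(2)}(M)}$. Since $\bar d_{Tf}$ and $\bar \delta_{Tf}$ commute with $\Box_{Tf}$ (via the essentially self-adjoint extension), $\bar d_{Tf}$ restricts to $F_T^*$, giving a finite-dimensional cochain complex with $\dim F_T^i = m_i$ by step one. Using the Hodge--Kodaira decomposition \eqref{dec} and splitting every $L^2$ form by the spectral projections of $\Box_{Tf}$ onto $[0,1]$ and $(1,\infty)$, one checks that the high-spectrum part is $\bar d_{Tf}$-exact and $\bar \delta_{Tf}$-exact, so the inclusion induces
\[H^i(F_T^*,\bar d_{Tf}) \;\cong\; H^i_{(2)}(M,d_{Tf}),\]
and hence $\dim H^i(F_T^*) = b_i(T)$.

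Third, the purely algebraic strong Morse inequalities for any finite-dimensional cochain complex $(F^*,d)$ finish the proof: writing $\dim F^i = \dim H^i(F) + \dim B^i + \dim B^{i+1}$ with $B^i \subseteq F^i$ the image of $d$, one obtains by telescoping
\[(-1)^k\sum_{i=0}^k (-1)^i\dim H^i(F) + \dim B^{k+1} \;=\; (-1)^k\sum_{i=0}^k (-1)^i \dim F^i,\]
which gives the inequality for each $k$ and equality at $k=n$ since $B^{n+1}=0$. The main obstacle is step one: in the compact case one simply puts a partition of unity between finitely many critical points and the complement, but in the noncompact case the complement is unbounded, so one must use both the Agmon decay of genuine eigenforms at infinity and the tameness-driven lower bound on $\Delta_{H,Tf}$ outside a large ball to rule out spurious small eigenvalues generated from the noncompact end. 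This is precisely the novel use of Theorem \ref{est} highlighted in the introduction.
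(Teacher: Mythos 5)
Your proposal follows essentially the same strategy as the paper: count small eigenvalues via localization near the critical points and a lower bound for $\Box_{Tf}$ away from them (the paper's Proposition \ref{mo}, whose proof is deferred to \cite{zhang2001lectures} together with a lower bound for forms supported in $M\setminus\bigcup U_x$), identify the cohomology of the instanton complex $(F^{[0,1],*}_{Tf},d_{Tf})$ with $H^*_{(2)}(M,d_{Tf})$ via the Kodaira decomposition and Hodge theorem (Theorem \ref{kodairahodge}), and then apply the purely algebraic Morse inequalities for a finite-dimensional complex. One small caveat: the paper does not actually need the Agmon estimate (Theorem \ref{est}) for this theorem — the spectral count in Proposition \ref{mo} is controlled entirely by the tameness-driven operator lower bound $\|\Box_{Tf}\phi\|_{L^2}\geq CT\|\phi\|_{L^2}$ for $\phi$ supported outside $\bigcup U_x$ together with the discreteness result Theorem \ref{discrete}; the pointwise decay of eigenforms only becomes essential later, when comparing the instanton complex with the Thom–Smale complex (Theorem \ref{wts}), so invoking it here is harmless but unnecessary.
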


In general,  $b_i(T)$ may be very sensitive to $T$.  However we have the following result regarding the indepedence of $b_i(T)$ in $T$. Assume that the Morse function $f$ satisfies the Smale transversality condition. Let $(C^*(W^u),\patt')$ be the Thom-Smale complex given by $f$. It is important to note  that in general, since $M$ is noncompact,  it could happen that $(\patt')^2\not=0$. 
Also let $c>0$ be big enough, $U_c=\{p\in M:f(p)<-c\}$ and $(\Omega^*(M,U_c),d)$ be the relative de Rham complex.

\begin{thm}\label{wts}
 If $(M,g,f)$ is well tame, then $(\patt')^2=0$, and therefore the cohomology $H^*(C^\bullet(W^u),\patt')$ is well defined. Moreover, there exists $T_0\geq 0$ (When $(M,g,f)$ is strongly tame, $T_0=0$), such that
$H^*_{(2)}(M,d_{Tf})$ is isomorphic to $H^*(C^\bullet(W^u),\patt')$ for all $T> T_0$. In addition, $H^*(C^\bullet(W^u),\patt')$, hence $H^*_{(2)}(M,d_{Tf})$ is isomorphic to the relative de Rham cohomology $H_{dR}^*(M,U_c)$.

\end{thm}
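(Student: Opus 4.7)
The plan is to follow the Bismut–Zhang strategy: build Witten's finite-dimensional instanton complex $F^{[0,1]}_T$ out of the small-eigenvalue eigenspaces of $\Box_{Tf}$, then identify it with both the Thom–Smale complex $(C^\bullet(W^u),\patt')$ and (by a separate geometric argument) with the relative de Rham complex of $(M,U_c)$. The Agmon estimate of Theorem \ref{est} is the analytic ingredient that makes the compact-case arguments go through on a noncompact manifold; unlike the compact case, it is used not only near the critical points but crucially at spatial infinity as well.

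First I construct the instanton complex. Let $F^{[0,1]}_T\subset L^2\Lambda^*(M)$ be the spectral subspace of $\Box_{Tf}$ for $[0,1]$. Using Proposition \ref{estihdg} together with harmonic-oscillator model operators centered at each critical point of $f$, and using the Agmon estimate between different critical points to decouple them, I would show that for $T$ sufficiently large $\dim F^{[0,1]}_T=\sum_i m_i$, graded by Morse index, and that each eigenform in $F^{[0,1]}_T$ is approximated in $L^2$ by the corresponding Gaussian ground state up to exponentially small error. Because $d_{Tf}$ commutes with $\Box_{Tf}$, the space $F^{[0,1]}_T$ is a finite-dimensional subcomplex of $(\Omega^*_{(2)}(M),d_{Tf})$; by the Hodge–Kodaira decomposition \eqref{dec} its cohomology equals $\ker\Box_{Tf}\cong H^*_{(2)}(M,d_{Tf})$.

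Next I build a chain isomorphism $J_T:C^\bullet(W^u)\to F^{[0,1]}_T$ in the spirit of Helffer–Sjöstrand and \cite{bismutzhang1992cm}: to each critical point $x$ of index $k$ associate a Gaussian-type $k$-form supported near $x$ with axes along the unstable directions, and define $J_Tx$ as its spectral projection onto $F^{[0,1]}_T$. A direct computation then identifies the matrix element $\langle d_{Tf}J_Tx,J_Ty\rangle$, for $y$ of index $k+1$, with a weighted count of instantons from $x$ to $y$, reproducing $\patt'$ up to exponentially small error; once $J_T$ is shown to intertwine $\patt'$ with $d_{Tf}$ and to be an isomorphism for $T>T_0$, the identity $d_{Tf}^2=0$ pulls back through $J_T$ to give $(\patt')^2=0$ and the cohomology identification $H^*(C^\bullet(W^u),\patt')\cong H^*_{(2)}(M,d_{Tf})$. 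The main obstacle sits in this step: the unstable manifolds $W^u_x$ are noncompact in general, so the integrals defining $J_Tx$ and the off-diagonal matrix elements of $d_{Tf}$ run to spatial infinity. This is exactly where Theorem \ref{est} is invoked at infinity: the bound $|\omega(p)|\le CT^{(n+2)/2}\exp(-a\rho_T(p))\|\omega\|_{L^2}$ gives absolute convergence of all the relevant pairings, and the key technical verification is that along each unstable cell the Agmon distance $\rho_T$ dominates the weight $|Tf|$ by a margin independent of $T$, which will follow from well tameness.

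Finally, to identify $H^*(C^\bullet(W^u),\patt')$ with $H^*_{dR}(M,U_c)$, note that $\liminf_{p\to\infty}|\nabla f|\ge\epsilon_f>0$ forces all critical points, and hence their closed unstable manifolds $\overline{W^u_x}$, into $\{f\ge -c\}$ once $c$ is large enough. Following Smale's transversality arguments adapted to the tame setting, the negative gradient flow then provides a strong deformation retraction of $M$ onto $U_c\cup\bigcup_x\overline{W^u_x}$, giving a relative CW-structure on the pair $(M,U_c)$ whose cellular cochain complex is precisely $(C^\bullet(W^u),\patt')$; the de Rham theorem for CW pairs then completes the identification with $H^*_{dR}(M,U_c)$.
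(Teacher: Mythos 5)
Your overall strategy matches the paper's: build the Witten instanton complex $F^{[0,1]}_T$, relate it to the Thom--Smale complex via a chain map, and use the Agmon estimate to control the noncompact unstable manifolds. However, there are two genuine gaps.

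The first gap concerns $(\patt')^2=0$. You propose to derive it by ``pulling back'' $d_{Tf}^2=0$ through a map $J_T$ built by spectrally projecting Gaussian model forms, but you yourself note this only reproduces $\patt'$ \emph{up to exponentially small error}. An approximate intertwining cannot yield the exact integer identity $(\patt')^2=0$. What would work is an \emph{exact} chain map in the other direction --- the paper's $\mathcal{J}\omega=\sum_x W^u(x)'\int_{W^u(x)}\exp(Tf)\,\omega$ from the instanton complex to the \emph{dual} Thom--Smale complex --- but establishing that $\mathcal{J}$ is a chain map requires a Stokes formula $\int_{W^u(x)}d\phi=\sum_y m(x,y)\int_{W^u(y)}\phi$ for noncompact unstable manifolds. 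This is exactly the technical content your proposal is missing: the paper proves this (Proposition \ref{stoke}) by showing, via the choice of $K$ and Observation \ref{flow}, that all interactions between critical points happen inside a fixed compact set; then one doubles a large compact submanifold $N$, extends $f$ to a transversal Morse function on $DN$, and reduces to Laudenbach's result in the closed case. The direct geometric proof of $(\patt')^2=0$ falls out as a corollary of this Stokes formula, independently of, and logically prior to, the chain isomorphism. Your proposal, by contrast, leaves the order of quantifiers circular: you need an exact chain map to get $(\patt')^2=0$, and the exactness needs the Stokes formula, which your Gaussian--projection construction does not supply.

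The second gap is a geometric inaccuracy in the last paragraph. The condition $\liminf|\nabla f|>0$ does \emph{not} force $\overline{W^u(x)}\subset\{f\ge -c\}$; on the contrary, along the negative gradient flow $f$ decreases, and the noncompact ends of the unstable manifolds escape precisely into $U_c=\{f<-c\}$ (this is Remark \ref{fgoestoinfinity} in the paper). The correct statement is that the pair $(M,U_c)$ carries a relative CW structure with cells the unstable manifolds taken relative to $U_c$. The paper's route to $H^*_{dR}(M,U_c)$ is a spectral-sequence argument modeled on Bismut--Zhang, using the filtration $V_i=f^{-1}(-\infty,i+\tfrac12]$ for a self-indexed $f$ (together with Lemma \ref{v0vn} identifying $V_0$ and $V_n$); there is also a direct analytic proof via a cone complex when $f$ is proper. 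Your deformation-retraction idea is similar in spirit to the filtration argument, but as stated the inclusion $\overline{W^u(x)}\subset\{f\ge -c\}$ is false, and the completeness issues for the gradient flow (addressed in Lemma \ref{complete} by rescaling $\nabla f$) would need to be handled before any retraction can be asserted.
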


By Theorem \ref{wts}, we can refine our result of Theorem \ref{morse}
\begin{cor}
If $(M,g,f)$ is well tame, then $b_i(T)$ is independent of $T$ when $T$ is big enough. When $(M,g,f)$ is strongly tame,
$b_i(T)$ is independent of $T> 0.$
\end{cor}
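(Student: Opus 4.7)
The corollary is a formal consequence of Theorem \ref{wts}, so my plan is short and splits naturally into three steps.

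First, I would invoke Theorem \ref{wts}: under the well tame assumption there exists $T_0 \geq 0$ such that for every $T > T_0$ one has an isomorphism
\[
H^*_{(2)}(M, d_{Tf}) \;\cong\; H^*(C^\bullet(W^u), \patt').
\]
This is the entire analytic input; the earlier sections of the paper carry all of the work (Agmon estimates, construction of the Witten instanton complex, identification with the Thom-Smale complex), so at this stage nothing new is needed.

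Second, I would observe that the Thom-Smale complex $(C^\bullet(W^u), \patt')$ is built purely from the critical point data of $f$, namely the finite set of critical points graded by their Morse indices together with the signed intersection numbers of stable and unstable manifolds under the negative gradient flow of $f$. None of this data depends on the deformation parameter $T$. Consequently $H^*(C^\bullet(W^u), \patt')$ is a fixed finite-dimensional graded vector space (finite-dimensional because $f$ has only finitely many critical points). Taking dimensions in the isomorphism from the first step yields
\[
b_i(T) \;=\; \dim H^i(C^\bullet(W^u), \patt') \qquad \text{for every } T > T_0,
\]
which is exactly the claimed $T$-independence on $(T_0, \infty)$.

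Third, for the strongly tame case I would simply appeal to the parenthetical in Theorem \ref{wts}, which asserts $T_0 = 0$ in that stronger setting; the argument above then applies verbatim on all of $(0, \infty)$.

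There is essentially no obstacle: all of the real difficulty was absorbed into the proof of Theorem \ref{wts}, and what remains is the tautological observation that the target of the isomorphism is manifestly $T$-independent. The only subtle point worth mentioning explicitly in the write-up is that the isomorphism in Theorem \ref{wts} is established for \emph{every} $T > T_0$ (not merely for one sufficiently large $T$), because the conclusion $b_i(T) \equiv \text{const}$ would otherwise be vacuous; fortunately this is exactly how Theorem \ref{wts} is stated.
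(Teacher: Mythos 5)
Your proof is correct and follows exactly the route the paper intends: the corollary is an immediate consequence of Theorem \ref{wts}, since the target $H^*(C^\bullet(W^u),\patt')$ (equivalently $H^*_{dR}(M,U_c)$, which Theorem \ref{wts} also identifies) is manifestly $T$-independent, and the strongly tame case uses $T_0=0$ from the same theorem. The only small point worth making explicit in a write-up is that the paper defines the (un)stable manifolds via the rescaled flow $-F\nabla f$ with $F$ depending on $T$, so one should note that rescaling by a positive function only reparameterizes flow lines and therefore leaves $W^u(x)$, $W^s(y)$, and the counts $m(x,y)$ unchanged.
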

As an another application of Theorem \ref{wts}, we study the Morse cohomology for compact manifolds with boundary.

Let $M$ be a compact, oriented manifold of dimension $n$ with boundary $\partial M$. Let $N_i(i\in \Lambda)$ be the connected components of $\partial M$. We fix a collar neighborhood $(0,1]\times N_i\subset M$, and let $r$ be the standard coordinate on the $(0,1]$ factor.
\begin{defn}
A smooth function $f$ on $M$ is called a transversal Morse function if it satisfies the following conditions:
\begin{enumerate}
\item $f| _{M\setminus\partial M}$ is a Morse function on the manifold $M\setminus\partial M $;
\item $f| _{\partial M}$ is a Morse function on the manifold $\partial M$.
\item For any point $x$ on the collar neighborhood, $\left.-\frac{\partial f}{\partial r}\right|_{x}\neq 0$.
\end{enumerate}
\end{defn}
For a transversal Morse function $f$ on $M$, since
$-\frac{\pat f}{\pat r}$ is continuous on any connected components of $\partial M$, so we can call $N_i$ to be positive (with
respect to $f$) if
$-\frac{\pat f}{\pat r}|_{N_i}>0$, and negative if $-\frac{\pat f}{\pat r}|_{N_i}<0.$

Let $N^+$ be the union of all positive boundaries, $N^-$ be the union of all negative boundaries. Suppose we have a partition of positive boundaries
$N^+=N^+_1\sqcup N^+_2$, and a partition of negative boundaries $N^-=N^-_1\sqcup N^-_2.$
Now We denote
\begin{itemize}
\item by $Crit^{\circ,k}(f)$ the set of internal critical points with Morse index $k$, $m_k=|Crit^{\circ,k}(f)|$;
\item by $Crit^{+,k}_{N^+_j}(f)(j=1,2)$ the set of critical points on positive boundary $N^+_j$ with Morse index $k$, $n_{k,N^+_j}=|Crit^{+,k}_{N^+_j}(f)|$;
\item by $Crit^{-,k}_{N^-_j}(f)(j=1,2)$ the set of critical points on negative boundary $N^-_j$ with Morse index $k-1$, $l_{k,N^-_j}=|Crit^{-,k}_{N^-_j}(f)|$.
\end{itemize}

Let $\Crit^{*}(f)=Cirt^{\circ,*}(f)\cup Cirt^{+,*}_{N^+_1}(f)\cup Cirt^{-,*}_{N^-_1}(f)$
\begin{thm}\label{bdy}
There is a differential $\patt':\Crit^{k}(f)\mapsto \Crit^{k+1}(f)$ making $(\Crit^{*}(f)\otimes\R,\patt')$ a chain complex. Moreover, $H^*(\Crit^\bullet(f)\otimes\R,\patt')$ is isomorphic to the relative de Rham cohomology $H^*(M,N_2^-\cup N_1^+).$

In particular, for $N_1^+=N^+,N_1^-=\emptyset,$  $H^*(\Crit^\bullet(f)\otimes\R,\patt')$ is isomorphic to the relative de Rham cohomology $H^*(M,\pat M).$
\end{thm}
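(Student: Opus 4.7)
The plan is to deduce Theorem \ref{bdy} from Theorem \ref{wts} by constructing a noncompact extension $(\widehat M, \widehat g, \widehat f)$ of $(M, g, f)$ and applying the well-tame Thom--Smale isomorphism to it. First, build $\widehat M$ by attaching a half-infinite cylindrical end $(-\infty, 0]\times N_i$ to each boundary component $N_i\subset \pat M$ and gluing it smoothly to the given collar so that $\widehat g$ equals the product metric $dr^2 + g|_{N_i}$ for $r\ll 0$. The bounded geometry of $(M,g)$, together with the product structure at infinity, ensures $(\widehat M, \widehat g)$ has bounded geometry.

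Next, extend $f$ on each end by setting $\widehat f(r,y) = f(0,y) + \psi_i(r)$ for $r\le 0$, where the profile $\psi_i$ depends on which of the four classes $N_1^+, N_2^+, N_1^-, N_2^-$ the component $N_i$ belongs to. On $N_2^+$ (resp.\ $N_2^-$) ends I take $\psi_i$ monotone with $\psi_i(r)\to+\infty$ (resp.\ $-\infty$) as $r\to-\infty$, so that no new critical points appear. On an $N_1^+$ end, $\psi_i$ is chosen to start with $\psi_i'(0)<0$ (forced by the sign of $\pat f/\pat r$ on $N^+$), pass through a single nondegenerate maximum at some $r_i^*<0$, and decrease to $-\infty$. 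On an $N_1^-$ end, $\psi_i$ starts with $\psi_i'(0)>0$, passes through a single nondegenerate minimum at $r_i^*<0$, and increases to $+\infty$. Taking $\psi_i$ to behave like $\pm r^2/2$ for $|r|$ large gives $|\psi_i'|\to\infty$ and $|\psi_i''|/|\psi_i'|^2\to 0$, which together with the bounded geometry makes $(\widehat M, \widehat g, \widehat f)$ well tame in the sense of Definition 1.4.

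A direct Hessian computation at the new critical points $(r_i^*, p)$, with $p\in\Crit(f|_{N_i})$, identifies the Thom--Smale complex of $\widehat f$ on $\widehat M$ with the complex $(\Crit^\bullet(f)\otimes\R, \patt')$ in the statement: the $N_1^+$ extension contributes a single negative eigenvalue in the $r$-direction (the maximum) and reproduces $\Crit^{+,k}_{N_1^+}(f)$ in the stated degree, the $N_1^-$ extension contributes a single positive eigenvalue (the minimum) and reproduces $\Crit^{-,k}_{N_1^-}(f)$ in the stated degree, the $N_2^\pm$ ends contribute nothing, and the interior critical points of $f$ transfer unchanged with the same indices. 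Applying Theorem \ref{wts} to $\widehat f$ then yields $(\patt')^2=0$ and, for $T$ and $c$ sufficiently large,
\[ H^*(\Crit^\bullet(f)\otimes\R,\patt') \cong H^*_{(2)}(\widehat M, d_{T\widehat f}) \cong H^*_{dR}(\widehat M, \widehat U_c), \]
where $\widehat U_c = \{\widehat f < -c\}$.

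The proof is finished by a homotopy argument: $\widehat M$ deformation retracts to $M$ along all four types of ends, and for $c$ sufficiently large $\widehat U_c$ lies entirely in the $N_2^-$ and $N_1^+$ ends (the only places where $\widehat f\to-\infty$), where it deformation retracts onto $N_2^-\cup N_1^+\subset M$; this gives $H^*_{dR}(\widehat M, \widehat U_c)\cong H^*(M, N_2^-\cup N_1^+)$. The special case $N_1^+=N^+,\ N_1^-=\emptyset$ then yields $H^*(M,\pat M)$ immediately. The main obstacle I anticipate is engineering the one-variable profiles $\psi_i$ to simultaneously achieve the smooth matching at $r=0$, the well-tame growth condition, and the correct index count at each $(r_i^*,p)$ so that the Thom--Smale complex of $\widehat f$ is exactly the one in the statement; once this construction is pinned down, everything else follows formally from Theorem \ref{wts} and standard homotopy equivalences of pairs.
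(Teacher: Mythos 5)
Your strategy coincides with the paper's: extend $(M,g,f)$ to a well-tame triple $(\widehat M,\widehat g,\widehat f)$ on a complete open manifold by attaching cylindrical ends and extending the transversal Morse function, apply Theorem~\ref{wts} to $\widehat f$, and finish with a homotopy equivalence of pairs identifying $H^*_{dR}(\widehat M,\widehat U_c)$ with $H^*(M,N_2^-\cup N_1^+)$. Your Hessian/index computation at the new critical points and the well-tameness verification are correct, and the deformation-retraction step at the end spells out a point the paper leaves terse.

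The genuine gap is the extension formula $\widehat f(r,y)=f(0,y)+\psi_i(r)$. No purely $r$-dependent profile $\psi_i$ can match $f$ smoothly at $r=0$ unless $\partial_r f(0,\cdot)$ is constant on $N_i$: the $C^1$-matching forces $\psi_i'(0)=\partial_r f(0,y)$ for every $y\in N_i$, but the left side is a single number while the right side is a generically nonconstant function on $N_i$. You flag this as the main obstacle, but it is not a matter of engineering a clever $\psi_i$ — the ansatz itself is structurally incompatible with a transversal Morse function whose normal derivative varies along the boundary. The paper resolves exactly this issue by Taylor-expanding $f(x',r)=f(x',1)+(\partial_r f)(x',1)\,\theta(x',r)$ near the boundary (with $\theta(x',1)=0$, $\partial_r\theta(x',1)=1$), and then interpolating only the normalized radial factor $\theta$ to a one-variable asymptotic profile ($(r-1)^2$ on $N_1$ ends, linear on $N_2$ ends) via a cutoff in $r$, while \emph{retaining} the $x'$-dependent prefactor $(\partial_r f)(x',1)$; this makes the gluing smooth and simultaneously achieves the desired critical point structure and well-tame growth. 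Equivalently, your proposal can be repaired by taking $\widehat f(r,y)=f(0,y)+(\partial_r f)(0,y)\,\psi_i(r)$ with $\psi_i(0)=0$, $\psi_i'(0)=1$, and a jet-matching interpolation near $r=0$; with either fix the remainder of your argument (index bookkeeping, well-tameness, and the retraction of $(\widehat M,\widehat U_c)$ onto $(M,N_2^-\cup N_1^+)$) goes through as you describe.
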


\begin{cor}\label{mbdy}
Set $b_i(M,N_2^-\cup N_1^+)=\dim(H^i(M,N_2^-\cup N_1^+), $ then we have the following Morse inequalities:
\[ (-1)^k \sum_{i=0}^k(-1)^ib_i(M,N_2^-\cup N_1^+)\leq (-1)^k \sum_{i=0}^k(-1)^i(m_i+n_{i-1,N^+_1}+l_{i, N_2^-}).\]
\end{cor}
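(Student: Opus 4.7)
The plan is to deduce this strong Morse-type inequality as an entirely formal consequence of Theorem \ref{bdy}. That theorem hands us a bounded cochain complex $(\mathrm{Crit}^\bullet(f)\otimes\mathbb{R}, \tilde\partial')$ of finite-dimensional real vector spaces (finite-dimensional because $M$ is compact and $f$ is a transversal Morse function, so each $\mathrm{Crit}^i(f)$ is a finite set) whose cohomology is identified with $H^*(M, N_2^-\cup N_1^+)$. Once this complex is in hand, nothing analytic remains to do: the corollary reduces to the standard algebraic Morse inequality for any such complex, combined with an identification of the rank of the $i$-th chain group.

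First I would record the algebraic lemma: for a bounded cochain complex $(C^\bullet, d)$ of finite-dimensional vector spaces with $c_i := \dim C^i$ and $b_i := \dim H^i(C^\bullet, d)$, one has
\[
(-1)^k \sum_{i=0}^k (-1)^i b_i \;\leq\; (-1)^k \sum_{i=0}^k (-1)^i c_i
\]
for every $k$, with equality at the top degree. The proof is a short telescoping argument from the identity $c_i = b_i + \mathrm{rk}(d_{i-1}) + \mathrm{rk}(d_i)$: taking the alternating partial sum collapses most terms and leaves only the nonnegative quantity $\mathrm{rk}(d_k)$ on the right-hand side.

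I would then apply this lemma to the Thom--Smale type complex of Theorem \ref{bdy}. The left-hand side is exactly $(-1)^k\sum_{i=0}^k(-1)^i b_i(M, N_2^-\cup N_1^+)$ by the cohomology identification in that theorem. For the right-hand side it remains to count $\dim\bigl(\mathrm{Crit}^i(f)\otimes\mathbb{R}\bigr)$ by unwinding the disjoint-union decomposition of $\mathrm{Crit}^i(f)$ set up just before Theorem \ref{bdy}: the interior critical points of Morse index $i$ contribute the term $m_i$, while the boundary critical points on the positive and negative pieces enter with the degree shifts built into the definitions of $n_{\bullet, N_1^+}$ and $l_{\bullet, N_2^-}$, yielding precisely $m_i + n_{i-1, N_1^+} + l_{i, N_2^-}$.

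I do not expect a serious obstacle here. Boundedness and finite-dimensionality of the complex are automatic from $M$ being compact and $f$ being a Morse function, and the algebraic inequality is classical. The only step that requires genuine care is the bookkeeping of index and partition conventions: one must verify that with the conventions fixed before Theorem \ref{bdy} (positive-boundary critical points of Morse index $k$ placed in complex degree $k+1$ via the contribution $n_{k,N_1^+}$ to degree $k+1$, and negative-boundary critical points of Morse index $k-1$ placed in complex degree $k$ via $l_{k,N_2^-}$), the three counts on the right-hand side of the claimed inequality are exactly the ranks of the chain groups in each degree.
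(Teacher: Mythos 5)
Your strategy is the right one, and it is visibly what the paper has in mind: Corollary \ref{mbdy} is stated immediately after Theorem \ref{bdy} with no independent argument, so it is supposed to be the standard algebraic Morse inequality applied to the finite-dimensional cochain complex $(\Crit^\bullet(f)\otimes\R,\patt')$ together with the cohomology identification $H^*(\Crit^\bullet(f)\otimes\R,\patt')\cong H^*(M,N_2^-\cup N_1^+)$. Your algebraic lemma and its telescoping proof are exactly the point, and finite-dimensionality is indeed automatic here.

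The genuine gap is in the step you describe as ``bookkeeping'' and then do not actually carry out. You assert that unwinding the decomposition of $\Crit^i(f)$ ``yields precisely $m_i+n_{i-1,N_1^+}+l_{i,N_2^-}$.'' But the definition recorded just before Theorem \ref{bdy} is $\Crit^{*}(f)=Crit^{\circ,*}(f)\cup Crit^{+,*}_{N_1^+}(f)\cup Crit^{-,*}_{N_1^-}(f)$, and with the counting conventions as stated ($n_{k,N_j^+}=|Crit^{+,k}_{N_j^+}(f)|$ counts Morse index $k$ on the positive boundary, $l_{k,N_j^-}=|Crit^{-,k}_{N_j^-}(f)|$ counts Morse index $k-1$ on the negative boundary) the literal cardinality of the degree-$i$ piece is $m_i+n_{i,N_1^+}+l_{i,N_1^-}$. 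That disagrees with the right-hand side of the corollary in two places: the shift $n_{i,N_1^+}$ versus $n_{i-1,N_1^+}$, and the partition piece $N_1^-$ versus $N_2^-$. You cannot simply say the shifts are ``built into the definitions'' --- the definition of $n_{k,\cdot}$ has no shift, and the relevant piece of the boundary is not even the same. To close the argument you would need to go back to the extension lemma of Section \ref{atmwb}, compute the Morse index of each new critical point of $\tilde f$ on the added collar (the quadratic cap on a positive piece raises the index by $1$, while on a negative piece it does not), and then reconcile that with the indexing of $\Crit^*(f)$ and with the $N_2^-$ appearing in both the target relative cohomology and the corollary. As written, your proof treats this as a triviality when it is in fact where the substance --- and, I suspect, a notational inconsistency in the paper itself --- lies; you should either resolve it explicitly or flag it.
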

\begin{rem} 
\begin{enumerate}
\item  Theorem \ref{bdy} is a generalization of a result in \cite{laudenbach2011morse}.
\item Corollary \ref{mbdy} is a generalization of a result in \cite{lu2017thomsmale}.
\end{enumerate}
\end{rem}
\subsection{Notations and Organization}
In this article, we will generally use $\phi,\psi$ to denote differential forms, $f$  Morse function, $u,v$  functions, $\nu, \o$  eigenforms, $x,y,z$ critical points of $f$, $p,q$  general points, and $p_0$ a fixed point.

This paper is organized as follows.
In Section \ref{sowl}, we discuss the spectral theory for the Witten Laplacian in our setting. We then proceed to establish the exponential decay estimate for eigenforms of the Witten Laplacian in Section \ref{sec}. Assuming two technical results whose proofs are deferred to Section 7 (7.3) and using a lemma proved in Section 5 about the Agmon distance we prove Theorem \ref{est}, the Agmon estimate. 

In Section \ref{MorseInequality} we present the proof of the Strong Morse Inequalities, Theorem \ref{morse}, after introducing  Witten instanton complex $(F_{Tf}^{[0,1],*},d_{Tf})$. Section \ref{tsw} concerns the Thom-Smale theory in our setting. More specifically, we define the Thom-Smale complex $C^*((W^u)',\patt')$. Then we define a morphism between the Witten instanton complex and the Thom-Smale complex,  $\mathcal{J}: (F_{Tf}^{[0,1],*},d_{Tf})\mapsto C^*((W^u)',\patt').$ We prove that $\mathcal{J}$ is well defined by using the Agamon estimate, deferring the proof that $\mathcal{J}$ is a chain map to Subsection \ref{ThomSmaleComplex}. 

In Section \ref{atmwb} we give an application of our results, namely Theorem \ref{bdy}. Section \ref{tae} collects the proofs of several technical results. In the first two subsections we prove the lemmas used in the proof of Agmon estimate. In the Subsection \ref{ThomSmaleComplex}, we prove that our Thom-Smale complex is indeed a complex, i.e., $\patt'^2=0$.
The rest of the proof for Theorem \ref{wts} is in Subsection \ref{relative} and Subsection \ref{Independent}. Finally in Section \ref{l2}, which is an appendix, we discuss the Kodaira decomposition in a more general setting.

%
%
%
%
%
%
%
%

\section{The Spectrum Of Witten Laplacian}\label{sowl}

In this section we study the spectral theory of the Witten Laplacian on noncompact manifolds. In particular we establish the Kodaira decomposition and the Hodge theorem for the Witten Laplacian under our tameness condition.

\subsection{Essential self-adjointness of $d_{f}+\delta_{f}$}

\begin{thm} \label{esa} On a complete Riemannian manifold, if $$\lim\sup_{p\to\infty}\frac{|\nabla^2 f|(p)}{|\nabla f|^2(p)}<\infty,$$  then
$d_f+\delta_f$ is essentially self-adjoint.
\end{thm}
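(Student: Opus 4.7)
The plan is the classical Gaffney-type cut-off argument, reducing the essential self-adjointness of $A := d_f + \delta_f$ on $\Omega_c^*(M)$ to the fact that $A$ shares the principal symbol of the de Rham operator $d + \delta$. The zeroth-order summands $df \wedge{}$ and $\iota_{\nabla f}$ commute with scalar multiplication, so the commutator with a smooth function $\chi$ is
\[[A, \chi] \;=\; [d + \delta, \chi] \;=\; d\chi \wedge{} - \iota_{\nabla \chi},\]
a zero-order operator of pointwise norm at most $|d\chi|$, and completeness of $(M,g)$ lets us construct cut-offs of arbitrarily small gradient.

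To prove $A$ essentially self-adjoint, it suffices to show $\Dom(A^*) \subset \Dom(\overline{A})$. Fix $u \in \Dom(A^*)$ and set $v := A^* u \in L^2\Lambda^*(M)$; since $A$ is formally self-adjoint on $\Omega_c^*(M)$, this is equivalent to the distributional identity $Au = v$. By Hopf-Rinow, pick smooth cut-offs $\chi_n \in C_c^\infty(M)$ with $\chi_n \equiv 1$ on $B(p_0,n)$, $\chi_n \equiv 0$ off $B(p_0,2n)$, and $|d\chi_n|_g \le C/n$ globally (smooth $\psi(d(\cdot,p_0)/n)$ for a suitable profile $\psi$). Then distributionally
\[A(\chi_n u) \;=\; \chi_n v \;+\; (d\chi_n \wedge{} - \iota_{\nabla \chi_n})\, u,\]
so $\chi_n u \to u$ and $A(\chi_n u) \to v$ in $L^2$ as $n \to \infty$: the first two terms converge by dominated convergence, while the commutator term has $L^2$-norm at most $(C/n)\|u\|_{L^2(\operatorname{supp} d\chi_n)} \to 0$.

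The remaining step is to replace each compactly supported $\chi_n u$ by a genuinely smooth compactly supported form in the graph norm of $A$. Since $\operatorname{supp}(\chi_n u)$ is compact and the coefficients of $A$ are smooth (hence bounded) there, a standard Friedrichs mollification in local charts produces $u_{n,\varepsilon} \in \Omega_c^*(M)$ with $u_{n,\varepsilon} \to \chi_n u$ and $A u_{n,\varepsilon} \to A(\chi_n u)$ in $L^2$ as $\varepsilon \to 0$; the only nontrivial point is Friedrichs' commutator lemma applied to the first-order principal part $d + \delta$. A diagonal subsequence then converges to $u$ in the graph norm of $A$, so $u \in \Dom(\overline{A})$ with $\overline{A}\, u = v$. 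The main delicate point is this mollification on noncompact $M$, but since everything has been localized by $\chi_n$ only standard local theory is needed; the hypothesis $\limsup |\nabla^2 f|/|\nabla f|^2 < \infty$ is not invoked directly in this essential self-adjointness argument, though it becomes crucial in the subsequent spectral analysis of $\Box_{Tf}$.
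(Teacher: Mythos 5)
Your cut-off/mollification argument is correct, and it is in fact one of the two routes the paper cites: it is precisely the Gaffney-type argument behind the proof of Theorem 1.17 in Gromov--Lawson, which the paper points to as a ``see also.'' The paper's primary reference is Section 4 of Chernoff, which handles semibounded Schr\"odinger-type operators via the wave-equation/finite-propagation-speed method; this is where the authors' observation that the hypothesis forces $\Box_f$ to be bounded below would be used. By contrast, your argument makes no use of the lower bound on $\Box_f$ at all, and your remark at the end is both correct and sharper than the paper's phrasing: the commutator $[d_f+\delta_f,\chi]$ sees only the principal symbol (Clifford action by $d\chi$), so the zeroth-order Clifford multiplication by $\nabla f$ drops out, and completeness of $(M,g)$ alone already gives essential self-adjointness of $d_f+\delta_f$ (and, via Chernoff's power theorem, of $(d_f+\delta_f)^2$). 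The tameness hypothesis on $f$ is genuinely needed later in the paper — for semiboundedness of the Witten Laplacian, the Kodaira decomposition, and the spectral gap — so the authors presumably carry it into the statement simply because it is part of the standing assumptions; your proof shows it is superfluous for this particular theorem.
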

\begin{proof} 
 Since $\lim\sup_{p\to\infty}\frac{|\nabla^2 f|(p)}{|\nabla f|^2(p)}<\infty$, $\Box_{f}$ is bounded from below by Proposition \ref{estihdg}.  The rest of the proof is essentially the same as in Section 4 of \cite{chernoff1973}; see also the proof of Theorem 1.17 in \cite{gromov1983positive}.
\end{proof}

\subsection{On the spectrum of $\Box_{Tf}$}

From now on we will assume that $(M,g,f)$ is well tame. Then it follows that there exists a compact subset $K$, which can be taken to be a compact submanifold with boundaries that contains the closure of a ball of sufficiently large radius of $M$ (we will make a more specific choice of $K$ later in section \ref{tsw}), $\delta_1=\frac{1}{2}\epsilon_f,\delta_2=2c_f>0$, such that
\begin{equation} \label{conditionK} |\nabla f|>\delta_1, \  \ \  |\nabla^2 f| < \delta_2|\nabla f|^2 \  \ \mbox{on} \ M-K.\end{equation} 

Let $C_K=\max_K |\nabla^2 f|$. First, we establish the following basic lemma.
\def\dvol{\mathrm{dvol}}
\def\lt{\lambda_T}
\begin{lem}\label{t0}
	Fix any $b\in(0,1)$, there exists $T_1=T_1(c_f,  b)\geq0$ so that whenever $T\geq T_1$,  $\phi\in\Dom(\Box_{Tf})$
	\begin{eqnarray} \int_M(\Box_{Tf}\phi,\phi)\dvol & \geq & \int_M (\nabla \phi,\nabla \phi)\dvol+ \int_{M-K} b^2T^2|\nabla f|^2(\phi,\phi)\dvol \nonumber \\
	& &  - (C_R+TC_K) \int_K (\phi,\phi)\dvol . \label{bl} \end{eqnarray}
	Here $C_R$ is a constant depending only on the sectional curvature bounds of $g$.
\end{lem}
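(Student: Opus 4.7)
The plan is to start from the pointwise Weitzenb\"ock identity $\Delta=\nabla^{*}\nabla+\mathcal{R}$, where $\mathcal{R}$ is a zeroth-order curvature endomorphism on $\Lambda^{*}(M)$ whose operator norm is bounded by a constant $C_R$ depending only on the sectional curvature bounds. Combined with Proposition \ref{estihdg} and integration by parts, for $\phi\in\Omega^{*}_c(M)$ this gives
\[
(\Box_{Tf}\phi,\phi)_{L^2}=\|\nabla\phi\|_{L^2}^{2}+(\mathcal{R}\phi,\phi)_{L^2}-T(\nabla^{2}_{e_i,e_j}f[e^{i}\wedge,\iota_{e_j}]\phi,\phi)_{L^2}+T^{2}\||\nabla f|\phi\|_{L^2}^{2}.
\]
The zeroth-order operator $\nabla^{2}_{e_i,e_j}f[e^{i}\wedge,\iota_{e_j}]$ has pointwise operator norm controlled by a dimensional multiple of $|\nabla^{2}f|$, so its absolute value can be bounded by $C_{n}|\nabla^{2}f|\cdot|\phi|^{2}$ pointwise.

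Next I would split all integrals into the parts over $K$ and over $M\setminus K$. On $K$, I drop the nonnegative term $T^{2}\int_{K}|\nabla f|^{2}|\phi|^{2}$ and use $|\mathcal{R}|\leq C_R$ together with $|\nabla^{2}f|\leq C_K$ (absorbing dimensional constants into $C_R$ and $C_K$) to obtain the contribution $\int_{K}|\nabla\phi|^{2}-(C_R+TC_K)\int_{K}|\phi|^{2}$, which matches the right-hand side of (\ref{bl}).

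The substance of the argument is on $M\setminus K$. There, by (\ref{conditionK}), $|\nabla^{2}f|\leq\delta_{2}|\nabla f|^{2}$ and $|\nabla f|^{2}\geq\delta_{1}^{2}$. Hence it suffices to show that pointwise on $M\setminus K$,
\[
T^{2}|\nabla f|^{2}-C_{n}T\delta_{2}|\nabla f|^{2}-C_{R}\ \geq\ b^{2}T^{2}|\nabla f|^{2}.
\]
Using $|\nabla f|^{2}\geq\delta_{1}^{2}$, this reduces to the single scalar condition
\[
(1-b^{2})T^{2}-C_{n}\delta_{2}T\ \geq\ C_{R}/\delta_{1}^{2},
\]
which holds for all $T$ larger than some $T_{1}=T_{1}(b,c_{f},\epsilon_{f},C_R)$; since $1-b^{2}>0$, the quadratic in $T$ eventually dominates. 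This gives the estimate on $M\setminus K$ with the correct weight $b^{2}T^{2}|\nabla f|^{2}|\phi|^{2}$, and combining the two pieces yields (\ref{bl}) for $\phi\in\Omega^{*}_c(M)$.

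Finally, to pass from $\Omega^{*}_c(M)$ to a general $\phi\in\Dom(\Box_{Tf})$, I would invoke essential self-adjointness (Theorem \ref{esa}), which makes $\Omega^{*}_c(M)$ a core for the closed quadratic form associated to $\Box_{Tf}$; both sides of (\ref{bl}) are continuous in the corresponding graph norm, so the inequality extends by density. The main obstacle is purely bookkeeping: tracking the dimensional constant coming from the commutator $[e^{i}\wedge,\iota_{e_j}]$ and the precise interplay of $\delta_{1},\delta_{2},C_R,C_n,b$ that determines $T_1$; none of this involves any analytic difficulty beyond the pointwise algebraic bound above.
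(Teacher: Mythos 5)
Your argument is correct and follows essentially the same route as the paper: apply Proposition \ref{estihdg} together with the Bochner--Weitzenb\"ock identity $\Delta=\nabla^{*}\nabla+\mathcal{R}$, split the integral over $K$ and $M\setminus K$, bound the Hessian commutator and curvature terms on $K$ by $C_K$ and $C_R$, and on $M\setminus K$ absorb the error terms into $b^{2}T^{2}|\nabla f|^{2}$ using the tameness bounds from (\ref{conditionK}); the density argument at the end (via essential self-adjointness) is also what the paper implicitly invokes. The one small discrepancy worth flagging is the dependence of $T_1$: you correctly note that absorbing the curvature constant $C_R$ over $M\setminus K$ requires the lower bound $|\nabla f|\geq\delta_1=\tfrac12\epsilon_f$, so your $T_1$ depends on $C_R$ and $\epsilon_f$ in addition to $c_f$ and $b$, whereas the paper states $T_1=T_1(c_f,b)$ and folds the absorption into a factor $e_T=T^{2}|\nabla f|^{2}(1-4c_f/T)$ without making this dependence explicit — your version is the more careful bookkeeping, not a different proof.
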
   
\begin{proof} 
It suffices to show the inequality for a compactly supported smooth form.  By Proposition \ref{estihdg}, together with the Bochner-Weitzenb\"ock formula, we have
		\begin{eqnarray*}			\int_M(\Box_{Tf}\phi,\phi)\dvol & \geq & \int_M (\nabla \phi,\nabla \phi)\dvol -(C_R+TC_K) \int_K (\phi,\phi)\dvol \\
			& &  + \int_{M-K}  e_T(p) (\phi,\phi)\dvol,\end{eqnarray*}
		where $e_T=T^2|\nabla f|^2(1-\frac{4c_f}{T})$. 
		Thus, for any $b\in (0,1)$, there exists \begin{equation}\label{t1}
		T_1=T_1(c_f,  b)
		\end{equation}
		such that whenever $T\geq T_1,$ we have
\begin{eqnarray*} \int_M(\Box_{Tf}\phi,\phi)\dvol & \geq & \int_M (\nabla \phi,\nabla \phi)\dvol+  \int_{M-K}|bT\nabla f|^2(\phi,\phi)\dvol  \\  & & -(C+TC_K) \int_M (\phi,\phi)\dvol .
\end{eqnarray*}
\end{proof}
\begin{rem}
	When $(M,g,f)$ is strongly tame, we can take $T_1=0,$ but $K$ may depend on $T.$
\end{rem}

Let $\gt:=b^2T^2|\nabla f|^2g$ be a new metric on $M$ (with discrete conical singularities).
Fix $p_0 \in K,$ let $\rho_T(p)$ be the distance between $p$ and $p_0$ induced by $\gt.$ Then we have  $|\nabla \rho_T|^2=b^2T^2|\nabla f|^2$ a.e., where the gradient $\nabla$ is induced by $g$.
\begin{thm}\label{discrete}
Let $\sigma$ be the set of spectrum of $\Box_{Tf}.$ Then for any positive number $\delta < \frac{b\epsilon_f}{2}$ there is \begin{equation}\label{t2}T_2=T_2(\delta, c_f, \epsilon_f, C_R, C_K) >0\end{equation} such that  when $T\geq T_2$, $\sigma\cap[0,\delta^2 T^2]$ consists of a finite number of eigenvalues of finite multiplicity.
\end{thm}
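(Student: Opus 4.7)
The plan is to prove that the bottom of the essential spectrum of $\Box_{Tf}$ lies strictly above $\delta^2 T^2$; once that is established, the portion of the spectrum in $[0,\delta^2 T^2]$ must be purely discrete and, since the discrete spectrum can only accumulate at $\inf \sigma_{ess}$, must consist of only finitely many eigenvalues of finite multiplicity.

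First, given $\delta$ with $\delta < b\epsilon_f/2$, I fix such a $b\in(0,1)$ and apply Lemma \ref{t0}, which produces $T_1=T_1(c_f,b)$. For any $T\geq T_1$ and any $\phi\in C_c^\infty(M-K,\Lambda^*)$, the $\int_K$ correction term in Lemma \ref{t0} vanishes. Using that $|\nabla f|>\delta_1=\epsilon_f/2$ on $M-K$, the lemma gives
\begin{equation*}
\langle \Box_{Tf}\phi,\phi\rangle_{L^2} \;\geq\; \int_{M-K} b^2 T^2 |\nabla f|^2 |\phi|^2\,\dvol \;\geq\; \frac{b^2 T^2 \epsilon_f^2}{4}\,\|\phi\|_{L^2}^2 \;>\; \delta^2 T^2 \|\phi\|_{L^2}^2.
\end{equation*}

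Next, I invoke Persson's characterization of the infimum of the essential spectrum for the (essentially, by Theorem \ref{esa}) self-adjoint operator $\Box_{Tf}$:
\begin{equation*}
\inf \sigma_{ess}(\Box_{Tf}) \;=\; \sup_{K'\Subset M}\; \inf\!\left\{\frac{\langle \Box_{Tf}\phi,\phi\rangle_{L^2}}{\|\phi\|_{L^2}^2} : 0\neq\phi\in C_c^\infty(M-K',\Lambda^*)\right\}.
\end{equation*}
Choosing $K'\supset K$ and combining with the estimate above yields $\inf\sigma_{ess}(\Box_{Tf})\geq b^2 T^2\epsilon_f^2/4 > \delta^2 T^2$, so $\sigma\cap[0,\delta^2 T^2]$ is strictly separated from $\sigma_{ess}$. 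Hence this part of the spectrum is discrete and finite, completing the proof with $T_2:=T_1$ (the dependence on $C_R,C_K$ in the statement is harmless since those constants do not enter the bound used here).

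The only nontrivial point is to justify Persson's characterization in our setting, i.e.\ for a lower-bounded, essentially self-adjoint Laplace-type operator acting on differential forms on a complete manifold. The usual proof via Weyl sequences, truncating an approximate eigenform by a cutoff supported in $M-K'$, carries over verbatim in the form setting; alternatively, one can give a self-contained argument via a cutoff $\chi\equiv 1$ near $K$ and the min-max principle, splitting $\phi=\chi\phi+(1-\chi)\phi$ so that $(1-\chi)\phi$ contributes at least $b^2 T^2\epsilon_f^2/4$ to the Rayleigh quotient (up to $\|\nabla\chi\|_\infty$ errors that are absorbed by enlarging $K'$), while the space of $\chi\phi$'s is mapped into a precompact set of $L^2$ by $(\Box_{Tf}+1)^{-1/2}$ via Rellich's theorem on the compact $\spt\chi$. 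Either route is standard, and this is the only step beyond direct application of Lemma \ref{t0}.
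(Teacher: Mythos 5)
Your proof is correct, but it takes a genuinely different route from the paper. You bound $\inf\sigma_{ess}(\Box_{Tf})$ from below via Persson's characterization, which only requires testing the Rayleigh quotient against forms compactly supported in $M-K'$; since Lemma \ref{t0} applied to such forms has no $\int_K$ correction term, you obtain $\inf\sigma_{ess}\geq b^2T^2\epsilon_f^2/4>\delta^2T^2$ directly, and the finiteness of $\sigma\cap[0,\delta^2T^2]$ follows from standard spectral theory (the discrete spectrum cannot accumulate below $\inf\sigma_{ess}$). The paper instead argues by hand: it combines Lemma \ref{t0} with the a priori bound $\int_M(\Box_{Tf}\phi,\phi)\leq\delta^2T^2\|\phi\|^2$ on the spectral subspace $L=\mathrm{Im}(P)$, chooses $T_2\geq T_1$ large enough that the coefficient $\delta^2(1+C_R/(\delta^2T^2)+C_K/(\delta^2T))$ falls below $(b\epsilon_f/2)^2$, thereby deducing $\int_M|\nabla\phi|^2\leq C\int_K|\phi|^2$, and then shows the restriction map $Q:L\to W^{1,2}(\Lambda^*K)\hookrightarrow L^2\Lambda^*(K)$ is injective with range in a compactly embedded space, forcing $\dim L<\infty$. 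The paper's argument is entirely self-contained and doesn't invoke Persson's theorem (whose extension to Laplace-type operators on forms over manifolds with bounded geometry, while standard, does require justification as you note); on the other hand your argument is shorter, conceptually cleaner, and yields the slightly sharper observation that $T_2$ can be taken to be $T_1(c_f,b)$ with no dependence on the curvature bound $C_R$ or the Hessian bound $C_K$ on $K$. Both routes are valid and deliver the stated conclusion.
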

\begin{proof}
Let $P:L^2\Lambda^*(M)\mapsto L^2\Lambda^*(M)$ be the integral of the spectral measure of $\Box_{Tf}$ on $[0,\delta^2 T^2]$. It suffices to prove that $L:=Im(P)$ is finite dimensional. For any $\phi\in L,$ we have
\begin{equation}\label{e1}\int_M(\Box_{Tf}\phi,\phi)dvol\leq \delta^2 T^2 \int_{M}|\phi|^2dvol.\end{equation}

Combining with (\ref{bl}), we have
\begin{eqnarray*} \delta^2 T^2 \int_{M}|\phi|^2dvol & \geq & \int_M (\nabla \phi,\nabla \phi)\dvol+  \int_{M-K} |bT\nabla f|^2(\phi,\phi)\dvol  \\  & & -(C_R+TC_K) \int_M (\phi,\phi)\dvol
\end{eqnarray*} provided $T\geq T_1$. That is,
\begin{eqnarray*}  \int_M (\nabla \phi,\nabla \phi)\dvol+  \int_{M-K} |bT\nabla f|^2(\phi,\phi)\dvol  & \leq &  \\   \delta^2 T^2(1 + \frac{C_R}{\delta^2T^2} +\frac{C_K}{\delta^2T}) \int_M (\phi,\phi)\dvol & &  \\
\end{eqnarray*}

Since $ |bT\nabla f|^2> (\frac{b\epsilon_f}{2})^2 T^2$ on $M-K$, $\delta < \frac{b\epsilon_f}{2}$,  there is $T_2=T_2(\delta, c_f, \epsilon_f, C_R, C_K) \geq T_1\geq 0$ such that  when $T\geq T_2$,
\begin{equation}
\int_M (\nabla \phi,\nabla \phi)\dvol \leq  \delta^2 T^2(1 + \frac{C_R}{\delta^2T^2} +\frac{C_K}{\delta^2T}) \int_K (\phi,\phi)\dvol. \label{e3}
\end{equation}

Now define $Q:L\mapsto L^2\Lambda^*(K)$, by $Qu=u|_{K}.$ By (\ref{e3}),
it's easy to see that $Q$ is injective, and $Im(Q)\subset W^{1,2}(\Lambda^*K).$ Since $W^{1,2}(\Lambda^*K)\hookrightarrow L^2\Lambda^*(K)$ is compact, $dim(L)=dim(Im(Q))$ must be finite.
\end{proof}

\begin{rem}
	Once again, if $(M,g,f)$ is strongly tame, we can take $T_2=0.$
\end{rem}

We now state the important consequence of this section. By combining Theorem \ref{esa} and Theorem \ref{discrete} with Proposition \ref{prop1}, decomposition (\ref{eq3}), we have

\begin{thm} \label{kodairahodge} 
	Assume that $(M,g,f)$ is well tame. Then we have the Kodaira decomposition
	\[L^2\Lambda^*(M)=\ker\Box_{f}\oplus\Im(\bar{d_f})\oplus\Im(\bar{\delta}_f),\]
Furthermore, the Hodge Theorem holds: 
$$H^*_{(2)}(M,d_{f})\cong\ker\Box_{f}.$$
	\end{thm}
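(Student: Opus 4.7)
The plan is to assemble three ingredients from above together with abstract spectral theory developed in the appendix. First, Theorem \ref{esa} provides essential self-adjointness of $d_f+\delta_f$, so its closure $\bar D$ is self-adjoint and $\Box_f=\bar D^{\,2}$ is the unique self-adjoint extension of $\Delta_{H,f}$; in particular it has a spectral resolution, and the graph closures satisfy $\bar{d}_f^{\,2}=0$, $\bar{\delta}_f^{\,2}=0$, and $\Box_f=\bar{d}_f\bar{\delta}_f+\bar{\delta}_f\bar{d}_f$. Second, Theorem \ref{discrete} applied to $f$ yields a spectral gap at the origin: $\sigma(\Box_f)\cap[0,\delta^2]$ consists of finitely many eigenvalues of finite multiplicity for some $\delta>0$, so in particular $0$ is an isolated point of $\sigma(\Box_f)$.

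With the spectral gap established, the Kodaira decomposition becomes a standard functional analytic consequence. Let $P$ denote the orthogonal projection onto $\ker\Box_f$; the gap makes $G:=\Box_f^{-1}(I-P)$ a bounded operator, and for any $\omega\in L^2\Lambda^*(M)$ one has
\[\omega=P\omega+\bar{d}_f\bigl(\bar{\delta}_fG\omega\bigr)+\bar{\delta}_f\bigl(\bar{d}_fG\omega\bigr),\]
the three summands being mutually orthogonal thanks to $\bar{d}_f^{\,2}=\bar{\delta}_f^{\,2}=0$ and self-adjointness of $\Box_f$. In particular $\Im(\bar{d}_f)$ and $\Im(\bar{\delta}_f)$ are closed. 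This argument is carried out abstractly in Section \ref{l2} as Proposition \ref{prop1} and equation \eqref{eq3}, and one simply invokes it here to obtain the first assertion.

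For the Hodge isomorphism $H^*_{(2)}(M,d_f)\cong\ker\Box_f$, the natural map is $\omega\mapsto[\omega]$. Harmonic $L^2$ forms are smooth, and hence lie in $\Omega^*_{(2)}(M)$, because $\Box_f$ shares the principal symbol of the ordinary Hodge Laplacian and local elliptic regularity applies on any bounded geometry ball. Injectivity is immediate from orthogonality of $\ker\Box_f$ and $\Im(\bar{d}_f)$. For surjectivity, given a smooth closed $\alpha\in\Omega^*_{(2)}(M)$, the Kodaira decomposition gives $\alpha=P\alpha+\bar{d}_f\eta+\bar{\delta}_f\xi$; applying $\bar{d}_f$, using $\bar{d}_f\alpha=0$, and invoking orthogonality forces $\bar{\delta}_f\xi=0$, so $[\alpha]=[P\alpha]$. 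Taking $\eta=\bar{\delta}_fG\alpha$, one has $\Box_f(G\alpha)=(I-P)\alpha$, which is smooth, so elliptic regularity for $\Box_f$ promotes $G\alpha$, and hence $\eta$, to a smooth $L^2$ form. The main obstacle to watch out for is precisely this last regularity step: while routine on compact manifolds, in the noncompact bounded-geometry setting it must be carried out using uniform local elliptic estimates rather than a global parametrix, but this is standard and causes no difficulty.
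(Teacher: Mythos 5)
Your proof follows the same route as the paper: combine essential self-adjointness (Theorem \ref{esa}), the spectral gap at zero (Theorem \ref{discrete}), and the abstract Proposition \ref{prop1} together with decomposition \eqref{eq3} from the appendix. The paper states this combination in a single sentence; your Green's-operator expansion and the elliptic-regularity argument for the Hodge isomorphism simply make explicit the standard details that the paper's reference implicitly invokes.
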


\section{Exponential decay of eigenfunction}\label{sec}
In this section, we assume that  $(M,g,f)$ is well tame, and $T\geq T_2$, where $T_2$ is described in Lemma \ref{t0}. If $(M,g,f)$ is strongly tame, then we can just take $T_2=0.$

\def\gt{\tilde{g}_T}
Recall that $\gt:=b^2T^2|\nabla f|^2 g$, the Agmon metric on $M.$ Let $K$ be the compact set as in last section. In this and later sections we define the Agmon distance $\rho_T(p)$ be the distance between $p$ and $K$ induced by $\gt.$ Then we have $|\nabla \rho_T|^2=b^2T^2|\nabla f|^2$ a.e. $p\notin K$, where the gradient $\nabla$ is induced by $g$.

 For simplicity, denote $b^2T^2|\nabla f|^2$ by $\lambda_T$.
We need the following two technical lemmas, whose proofs are postponed to Section \ref{tae}.
\begin{lem}\label{tech1}
Assume $w\in L^2(M),0\leq u\in L^2(M)$, and $(\Delta +\lt)u\leq w$ outside the compact subset $K\subset M$ in the weak sense. That is
\[\int_{M-K}\nabla u\nabla v+\lt u v \dvol\leq\int_{M-K}w\cdot v\dvol, \ \ \forall\ 0\leq v\in C_c^\infty(M-K).\]
Then there exists another compact subset $L\supset K$ of $M$ such that
\begin{align}\begin{split}\label{techeq}\int_{M-L}|u|^2\lt\exp(2b\rho_T)\dvol&\leq C_1 \left[ \int_{M-K}|w|^2\lt^{-1}\exp(2b\rho_T)\dvol \right. \\
&+ \left. \int_{L-K}|u|^2\lt\exp(2b\rho_T)\dvol \right] \end{split}\end{align}
for $C_1=\frac{8(1+b^2)}{(1-b^2)^2}$.
\end{lem}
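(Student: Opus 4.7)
The strategy is the classical Agmon weighted energy method. I would take $\phi = \chi \exp(b\rho_T)$, where $\chi$ is a Lipschitz cutoff vanishing in a neighborhood of $K$ and equal to $1$ on $M - L$, and test against $v = \phi^2 u \geq 0$ in the weak inequality $(\Delta+\lt)u\leq w$ on $M - K$. A priori $\phi$ may be unbounded, so strictly speaking one first truncates $\rho_T$ to $\min(\rho_T,N)$ and takes $\chi$ with compact support, derives the estimate uniformly in these approximations, then passes to the limit by monotone convergence (every integrand below is nonnegative).

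Substituting $v=\phi^2 u$ into $\int \nabla u \cdot \nabla v + \lt uv \,\dvol \leq \int w v\,\dvol$ and invoking the pointwise identity
\[\nabla u \cdot \nabla(\phi^2 u) = |\nabla(\phi u)|^2 - u^2\,|\nabla \phi|^2,\]
then discarding $|\nabla(\phi u)|^2 \geq 0$, gives
\[\int_{M-K}\bigl[\lt \phi^2 - |\nabla\phi|^2\bigr] u^2\,\dvol \leq \int_{M-K} w\, \phi^2 u\,\dvol.\]
Since $|\nabla \rho_T|^2 = \lt$ a.e.,
\[|\nabla \phi|^2 = e^{2b\rho_T}\bigl(|\nabla\chi|^2 + 2b\chi\,\nabla\chi\cdot\nabla\rho_T + b^2 \chi^2 \lt\bigr).\]
Applying Cauchy-Schwarz to the cross term (parameter $\epsilon>0$) and to the right-hand side via $w\phi^2 u = (w\phi\lt^{-1/2})(\phi u \lt^{1/2})$ (parameter $\alpha>0$) yields
\[\bigl(1 - \tfrac{\alpha}{2} - (1+\epsilon)b^2\bigr)\int \lt\chi^2 u^2 e^{2b\rho_T}\,\dvol \leq \bigl(1+\tfrac{1}{\epsilon}\bigr)\int |\nabla\chi|^2 u^2 e^{2b\rho_T}\,\dvol + \tfrac{1}{2\alpha}\int \chi^2 w^2 \lt^{-1} e^{2b\rho_T}\,\dvol.\]
Choosing $\alpha = (1-b^2)/2$ and $\epsilon = (1-b^2)/(4b^2)$, the LHS coefficient equals $(1-b^2)/2$, and a short computation shows both RHS coefficients are bounded by $C_1 = 8(1+b^2)/(1-b^2)^2$.

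For the geometric choice of cutoff, I pick $\chi$ with $\chi = 1$ on $M-L$, $\chi=0$ in a neighborhood of $K$, and $|\nabla\chi|^2 \leq \lt$ on its support (feasible since $\lt \geq \tfrac{1}{4} b^2 T^2 \epsilon_f^2$ on $M-K$ for $T$ large). Then $\chi^2 u^2 \geq u^2$ on $M-L$, so the LHS integral dominates $\int_{M-L}\lt u^2 e^{2b\rho_T}\,\dvol$, while on the RHS the $|\nabla\chi|^2$-term is supported in $L-K$ and is bounded by $\int_{L-K}\lt u^2 e^{2b\rho_T}\,\dvol$. Combining these yields exactly the inequality \eqref{techeq}.

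The principal obstacle is the admissibility of the (possibly $L^2$-unbounded) test function $\phi^2 u$ in the weak formulation; this is resolved by the truncation / approximation procedure outlined at the start. Once justified, the rest is a bookkeeping exercise in Cauchy-Schwarz and optimal choice of $\alpha,\epsilon$ to realize the stated constant $C_1$.
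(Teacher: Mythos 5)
Your proof follows essentially the same route as the paper: test the weak inequality against $\phi^2 u$ with $\phi$ an Agmon-weighted cutoff $\chi e^{b\rho_T}$, use the identity $\nabla u\cdot\nabla(\phi^2 u)=|\nabla(\phi u)|^2-u^2|\nabla\phi|^2$, apply Cauchy--Schwarz with parameters to absorb the cross term of $|\nabla\phi|^2$ and the right-hand side into the dominant $\lambda_T\phi^2 u^2$ term, and justify the manipulation by a truncation and monotone convergence argument. The paper's version makes the truncation explicit (a family $\eta_k\circ\rho_T$ with $|\eta_k'|\le 2$ together with $\rho_{T,j}=\min\{\rho_T,j\}$, and $L=\{\rho_T\le 2\}$), and does the $|\nabla\varphi|^2$ bound in a single weighted-Young step rather than your two-parameter $(\alpha,\epsilon)$ split; both routes yield coefficients dominated by the stated $C_1=8(1+b^2)/(1-b^2)^2$.
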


\begin{cor}
If $w=cu$ for some $c>0$ and $T\geq \frac{2\sqrt{1+c}}{b\epsilon_f}$, then \[I(u):=\int_{M}|u|^2\exp(2b\rho_T)\dvol<\infty.\]
\end{cor}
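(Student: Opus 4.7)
The plan is to apply Lemma \ref{tech1} with $w = cu$ and then absorb the resulting right-hand side into the left using the lower bound on $T$.

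First, I would establish a pointwise comparison on $M - K$: by (\ref{conditionK}), $|\nabla f| > \delta_1 = \epsilon_f/2$ on $M - K$, so $\lt = b^2T^2|\nabla f|^2 \geq b^2T^2\epsilon_f^2/4$. Combined with $T \geq 2\sqrt{1+c}/(b\epsilon_f)$ this gives $\lt \geq 1+c$ on $M - K$, and consequently $c^2\lt^{-1} \leq \lt \cdot c^2/(1+c)^2$ there---the key pointwise estimate that makes the $\lt^{-1}$-weighted right-hand side term of (\ref{techeq}) comparable to the $\lt$-weighted left-hand side term.

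Second, I substitute $w = cu$ into (\ref{techeq}) and split the integral $\int_{M-K}|u|^2\lt^{-1}e^{2b\rho_T}\dvol$ as $\int_{L-K} + \int_{M-L}$. The first piece is finite because $L-K$ is compact, $u \in L^2(M)$, and $\lt^{-1}$ is bounded on $M-K \supset L-K$. On $M-L$ I apply the pointwise estimate from Step 1 to obtain
\[
C_1 c^2 \int_{M-L}|u|^2\lt^{-1}e^{2b\rho_T}\dvol \leq \frac{C_1 c^2}{(1+c)^2}\int_{M-L}|u|^2\lt e^{2b\rho_T}\dvol.
\]
Provided $T$ is large enough relative to $c$ that the prefactor is strictly less than one (possibly strengthening the explicit hypothesis), this term can be absorbed into the left-hand side of (\ref{techeq}). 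Combined with the finite compact-region contributions $C_1 c^2\int_{L-K}|u|^2\lt^{-1}e^{2b\rho_T}\dvol$ and $C_1\int_{L-K}|u|^2\lt e^{2b\rho_T}\dvol$, the inequality then yields a finite bound on $\int_{M-L}|u|^2\lt e^{2b\rho_T}\dvol$.

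Third, since $\lt \geq 1+c > 0$ on $M - L$, this upgrades to $\int_{M-L}|u|^2 e^{2b\rho_T}\dvol < \infty$; the complementary integral $\int_L |u|^2 e^{2b\rho_T}\dvol$ is trivially finite as $L$ is compact and $u \in L^2$. This proves $I(u) < \infty$.

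The main subtlety I foresee is that a priori $\int_{M-L}|u|^2\lt e^{2b\rho_T}\dvol$ could be infinite, so the absorption step is not immediately rigorous. The natural remedy is to run the whole argument with the weight $\min(e^{2b\rho_T},N)$ in place of $e^{2b\rho_T}$, obtain the corresponding bound uniformly in $N$, and then pass to the limit $N\to\infty$ by monotone convergence; this truncation is presumably already built into the proof of Lemma \ref{tech1}, so the corollary can invoke its truncated form directly.
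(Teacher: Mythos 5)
Your approach is genuinely different from the paper's, and the difference is where the gap lies. The paper rewrites the hypothesis $(\Delta + \lt)u \leq cu$ as $(\Delta + (\lt - c))u \leq 0$ and then applies Lemma~\ref{tech1} with the potential $\lt - c$ (which is still $> 1$ on $M - K$ under the stated bound on $T$) and $w = 0$. Because $w = 0$, the $\lt^{-1}$-weighted term on the right of (\ref{techeq}) vanishes and there is nothing to absorb; the lemma immediately gives $\int_{M-L}|u|^2(\lt - c)e^{2b\rho_T}\dvol \leq C_1\int_{L-K}|u|^2(\lt - c)e^{2b\rho_T}\dvol < \infty$, and $\lt - c > 1$ on $M - L$ finishes it. You instead feed $w = cu$ directly into (\ref{techeq}) and try to absorb $C_1 c^2\int_{M-L}|u|^2\lt^{-1}e^{2b\rho_T}\dvol$ into the left-hand side.

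The absorption step is where your argument fails under the hypothesis as stated. Your prefactor is $C_1 c^2/(1+c)^2$ with $C_1 = \frac{8(1+b^2)}{(1-b^2)^2} \geq 8$, so already for $c = 1$ it is at least $2$, and it only worsens as $c$ grows or as $b \to 1$. More sharply: the pointwise absorption needs $C_1 c^2 \lt^{-2} < 1$ on $M - L$, i.e.\ $\lt > c\sqrt{C_1}$, whereas $T \geq \frac{2\sqrt{1+c}}{b\epsilon_f}$ only gives $\lt > 1+c$. These are incompatible once $c$ is not small, so strengthening the $T$-threshold is not optional for your route --- it would have to grow roughly linearly in $c$ rather than like $\sqrt{c}$, which changes the statement and would propagate into Corollary~\ref{wL2e} and the Agmon estimate where the $T$-thresholds are used quantitatively. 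The move worth internalizing is the paper's: put the $cu$ on the left as a shift of the potential, so that $w = 0$ and absorption is never needed. Your closing remark about running the argument with a truncated weight and passing to the limit is correct and is indeed how the proof of Lemma~\ref{tech1} is organized, but it does not rescue the absorption constant.
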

\begin{proof}
With this choice of  $T$, $\lambda_T>{1+c}$ outside $K.$
Now replacing $\lambda_T$ with $\lambda_T-c$ and $w$ with $0$ in Lemma \ref{tech1}, we get
\begin{align*}
&\int_{M-L}|u|^2\exp(2b\rho_T)\dvol\leq \int_{M-L}|u|^2(\lt-c)\exp(2b\rho_T)\dvol\\
&\leq  C_1\int_{L-K}|u|^2\lt\exp(2b\rho_T)\dvol<\infty.
\end{align*}

\end{proof}

By refining the argument above, we have the following corollary which will be used in the proof of our Agmon estimate for eigenforms.

\begin{cor} \label{wL2e}
	If $0\leq u\in L^2(M)$, and $\Box_{Tf}u\leq (c+T|\nabla^2 f|)u$ for some $c>0$ and $T\geq \max \{(c+2c_f)C_2, \sqrt{\frac{1-b^2 +8c}{1-b^2}} /(b\epsilon_f) \}$, then \[I(u):=\int_{M}|u|^2\exp(2b\rho_T)\dvol \leq CT^2  \|u\|^2\]
	where the constant $C=C(C^L, C_L, r_0, b, c), L=\{p\in M:\rho_T(p)\leq 2\}, C^L>\max_L |\nabla f|^2, C_L>\max_L |\nabla^2 f|$.
\end{cor}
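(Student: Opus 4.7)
The plan is to follow the template of the preceding corollary while tracking the $T$-dependence of the final estimate. First I would reduce the hypothesis to the setup of that corollary: since $u$ is a function, Proposition \ref{estihdg} gives $\Box_{Tf} u = \Delta u + T^2|\nabla f|^2 u$ (the commutator $[e^i\wedge, \iota_{e_j}]$ annihilates $0$-forms), so the assumption reads $(\Delta + T^2|\nabla f|^2) u \le (c + T|\nabla^2 f|)u$ weakly. On $M - K$ the well-tame bound $|\nabla^2 f| \le 2c_f |\nabla f|^2$ converts this to $\Delta u \le (c + (2c_f T - T^2)|\nabla f|^2) u$, and adding $\lambda_T u = b^2 T^2|\nabla f|^2 u$ to both sides yields
$$(\Delta + \lambda_T) u \le cu - \bigl[(1-b^2)T^2 - 2c_f T\bigr]\,|\nabla f|^2\, u$$
weakly on $M - K$. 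The first threshold $T \ge (c+2c_f) C_2$ (with $C_2$ of order $1/(1-b^2)$) makes the bracket nonnegative and lets one discard the last term, giving $(\Delta + \lambda_T)u \le cu$ weakly on $M - K$. The second threshold $T \ge \sqrt{(1-b^2+8c)/(1-b^2)}/(b\epsilon_f)$ is calibrated so that $\lambda_T - c > 1$ on $M - K$, letting one rewrite the inequality as $(\Delta + (\lambda_T - c))u \le 0$ weakly there.

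This is exactly the setting of the preceding corollary. I would then apply Lemma \ref{tech1} with $\lambda_T$ replaced by $\lambda_T - c$ and $w \equiv 0$, obtaining
$$\int_{M-L} |u|^2 (\lambda_T - c) \exp(2b\rho_T)\,\dvol \le C_1 \int_{L-K} |u|^2 \lambda_T \exp(2b\rho_T)\,\dvol.$$
On the tube $L$ one has $\rho_T \le 2$ and $|\nabla f|^2 \le C^L$, so $\exp(2b\rho_T) \le e^{4b}$ and $\lambda_T \le b^2 T^2 C^L$; the right-hand side is therefore at most $C_1 b^2 T^2 C^L e^{4b} \|u\|^2$, which is the source of the $T^2$ in the conclusion. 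Since $\lambda_T - c > 1$ on $M - L \subset M - K$, the factor $(\lambda_T - c)$ may be dropped on the left, and adding the trivial bound $\int_L |u|^2 \exp(2b\rho_T) \le e^{4b} \|u\|^2$ yields the desired $I(u) \le CT^2 \|u\|^2$.

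The main obstacle is the clean verification that the two explicit $T$-thresholds in the hypothesis are exactly what is needed for the reduction carried out in the first paragraph; beyond that, the argument is direct bookkeeping on top of the preceding corollary. The appearance of $C_L$ and $r_0$ in the constant $C$ is most natural if, for the later application to the Agmon estimate, one wishes to upgrade the $L^2$ control of $u$ on $L$ to pointwise control via standard interior elliptic regularity for $\Box_{Tf}$: the coefficients of $\Box_{Tf}$ on $L$ are bounded in terms of $C^L$ and $C_L$, and the Sobolev constants on geodesic balls of radius comparable to $r_0$ enter the usual Moser or $L^p$ estimates.
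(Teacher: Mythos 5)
Your argument is sound and arrives at the same conclusion, but it routes the $T|\nabla^2 f|$ term differently from the paper. The paper does not pre-reduce to $w\equiv 0$. Instead it reruns the proof of Lemma \ref{tech1} with $w=(c+T|\nabla^2 f|)u$ left intact, producing a right-hand contribution $C_2\int_{M-K}(c+T|\nabla^2 f|)|u|^2\exp(2b\rho_T)\dvol$; it then splits this over $L-K$ and $M-L$, uses $T|\nabla^2 f|\le 2c_fT|\nabla f|^2$ on $M-L$ to recast the tail as a multiple of $\int_{M-L}|u|^2(\lt-cC_2)\exp(2b\rho_T)\dvol$, and absorbs that into the left-hand side once $T\ge(c+2c_f)C_2$. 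You instead absorb $T|\nabla^2 f|$ into the $(1-b^2)T^2|\nabla f|^2$ gap between $\lt$ and $T^2|\nabla f|^2$ before invoking the lemma, reducing to $(\Delta+(\lt-c))u\le 0$ on $M-K$ and then following the preceding corollary verbatim. Both work; yours is a bit tidier and, as you observed, actually shows the final constant does not depend on $C_L$, because no $T|\nabla^2 f|$ factor ever survives to the shell $L-K$ (the $C_LT$ term in the paper's bound comes precisely from keeping $w$ there). Your guess that $C_L$ and $r_0$ are carried in the statement for the later Moser/pointwise step in the Agmon estimate is consistent with how the corollary is used.

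One correction: $[e^i\wedge,\iota_{e_j}]$ does not annihilate $0$-forms. For a function $\phi$, $e^i\wedge\iota_{e_j}\phi=0$ but $\iota_{e_j}(e^i\wedge\phi)=\delta^i_j\phi$, so $[e^i\wedge,\iota_{e_j}]\phi=-\delta^i_j\phi$, and the Witten Laplacian on functions is $\Delta u+T\,\mathrm{tr}(\nabla^2 f)\,u+T^2|\nabla f|^2 u$. This is harmless to your argument since $|\mathrm{tr}\,\nabla^2 f|\le\sqrt{n}\,|\nabla^2 f|$ is dominated by exactly the $T|\nabla^2 f|$ term you are already absorbing on $M-K$; and in the paper's own application to $u=|\omega|$ the hypothesis is obtained via Bochner--Kato, so it should be read directly as $(\Delta+T^2|\nabla f|^2)u\le(c+T|\nabla^2 f|)u$. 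Your reduction is the right one to make; only the stated reason for discarding the commutator term is off.
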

\begin{proof} Following the proof of Lemma \ref{tech1} given in Section 7.1, put $L=\{p\in M:\rho_T(p)\leq 2\}.$ Then we deduce
\begin{align*}\begin{split} \int_{M-L}|u|^2\lt\exp(2b\rho_T)\dvol&\leq C_1 \int_{L-K}|u|^2\lt\exp(2b\rho_T)\dvol \\
&+ C_2 \int_{M-K}(c+ T|\nabla^2f|) |u|^2\exp(2b\rho_T)\dvol \end{split}\end{align*}
for $C_1=\frac{8(1+b^2)}{(1-b^2)^2}$ as above, and $C_2=\frac{8}{1-b^2}$.	We split the second integral on the right hand side into two; the one over $L-K$ will be absorbed into the first term.
The second term is (we omit the volume form here)
\begin{align*}\begin{split}  C_2 \int_{M-L}c |u|^2\exp(2b\rho_T) &+ C_2 \int_{M-L}T|\nabla^2f| |u|^2\exp(2b\rho_T) \\
\leq  C_2 \int_{M-L}c |u|^2\exp(2b\rho_T) &+  \frac{C_2c_f}{T-cC_2/\epsilon_f} \int_{M-L}|u|^2(\lt-cC_2)\exp(2b\rho_T).   \end{split}\end{align*}
Combining the above one arrives at 
\begin{align*}\begin{split} \int_{M-L}|u|^2(\lt-cC_2) \exp(2b\rho_T)&\leq C_1 \int_{L-K}|u|^2(\lt + c + T |\nabla^2 f| ) \exp(2b\rho_T) \\
&+\frac{c_fC_2}{T-cC_2/\epsilon_f} \int_{M-L}|u|^2(\lt-cC_2)\exp(2b\rho_T).  \end{split}\end{align*}
Thus, for $T\geq (c+2c_f)C_2$, 
\begin{align*} \int_{M-L}|u|^2(\lt-cC_2) \exp(2b\rho_T)\leq 2C_1 (C^L b^2T^2 +c_LT +c)e^{4b} \|u\|^2, \end{align*}
where $C^L>\max_L |\nabla f|^2, C_L>\max_L |\nabla^2 f|$. If $T$ is also bigger than $\sqrt{\frac{1-b^2 +8c}{1-b^2}} /(b\epsilon_f)$, then $\lt > 1+ cC_2$ outside $L$. Hence
\begin{align*}
\int_{M-L}|u|^2\exp(2b\rho_T)\dvol &\leq \int_{M-L}|u|^2(\lt-cC-2)\exp(2b\rho_T)\dvol\\
&\leq   2C_1 (C^L b^2T^2 +c_LT +c)e^{4b} \|u\|^2,
\end{align*}
and consequently
\begin{align*}
\int_{M}|u|^2\exp(2b\rho_T)\dvol \leq  [ 2C_1 (C^L b^2T^2 +c_LT +c)+1]e^{4b} \|u\|^2,
\end{align*}
for $T\geq \max \{(c+2c_f)C_2, \sqrt{\frac{1-b^2 +8c}{1-b^2}} /(b\epsilon_f) \}$.
\end{proof}	
\begin{rem}\label{indepentoft}
It may seem that $C^L$ and $C_L$ depend on $T$ as  $L=\{p\in M:\rho_T(p)<r_0\}$. However,  notice that when $T$ becomes bigger, $L$ gets smaller. Hence we can choose $C^L>\max_{p\in L}|\nabla f|(p)$, $C_L>\max_{p\in L}|\nabla^2 f(p)|$, s.t. they are independent of $T.$
\end{rem}
\begin{lem}[De Giorgi-Nash-Moser Estimates]\label{moser}
For $r>0$, 
let $B_r(p)$ be the geodesic ball around $p$ with radius $r$ (in the metric $g$).  Let $0\leq u\in L^2(M)$, and   $\Delta u\leq cu$                
on $B_{2r}(p)$ in the weak sense for some constant  $c\geq 0.$ Then there exists constant $C_2>0$ depending only on the dimension $n$, the Sobolev constant, and $c$, such that
\[\sup_{y\in B_r(p)}u(y)\leq \frac{C_2}{r^{n/2}}\|u\|_{L^2(B_{2r}(p))}. \]
\end{lem}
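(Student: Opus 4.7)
The plan is the classical Moser iteration adapted to the Riemannian setting, using the uniform Sobolev inequality supplied by the bounded geometry assumption (referenced via \cite{cgt} at the start of Section 1.2). The argument proceeds in three stages.

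First, I would derive a Caccioppoli-type estimate. Since $u\geq 0$ weakly satisfies $\Delta u - cu\leq 0$ on $B_{2r}(p)$, I test the weak inequality against $v=\eta^{2}u^{2q-1}$ for $q\geq 1$ and a smooth cutoff $\eta$. Expanding $\nabla v$, rewriting $u^{2q-2}|\nabla u|^{2} = q^{-2}|\nabla u^{q}|^{2}$ and $u^{q-1}\nabla u = q^{-1}\nabla u^{q}$, and absorbing the resulting cross term via Cauchy--Schwarz yields
\[ \int \eta^{2}|\nabla u^{q}|^{2}\,\dvol \leq C\,q \int \bigl(|\nabla\eta|^{2}+c\,\eta^{2}\bigr)u^{2q}\,\dvol, \]
with $C$ an absolute constant. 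Combining this with the uniform Sobolev inequality $\|w\|_{L^{2\kappa}}\leq C_{S}(\|\nabla w\|_{L^{2}}+\|w\|_{L^{2}})$, with $\kappa=n/(n-2)$ for $n\geq 3$ (any $\kappa>1$ suffices for $n\leq 2$), applied to $w=\eta u^{q}$, gives the reverse-Hölder step
\[ \|u\|_{L^{2q\kappa}(B_{r'}(p))} \leq \bigl(C'q/(r''-r')^{2}\bigr)^{1/(2q)}\,\|u\|_{L^{2q}(B_{r''}(p))} \]
for $r\leq r'<r''\leq 2r$, provided $\eta\equiv 1$ on $B_{r'}(p)$, is supported in $B_{r''}(p)$, and satisfies $|\nabla\eta|\leq 2/(r''-r')$. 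Such cutoffs exist uniformly under bounded geometry (compose a smooth bump with the distance function, whose Hessian is controlled by Hessian comparison).

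Then I iterate with exponents $q_{k}=\kappa^{k}$ and radii $r_{k}=r(1+2^{-k})\searrow r$, starting from $r_{0}=2r$. Telescoping the reverse-Hölder step gives
\[ \|u\|_{L^{2\kappa^{k+1}}(B_{r_{k+1}})} \leq \prod_{j=0}^{k}\bigl(C'\kappa^{j}4^{j+1}/r^{2}\bigr)^{1/(2\kappa^{j})}\,\|u\|_{L^{2}(B_{2r})}. \]
Since $\sum_{j\geq 0}\kappa^{-j}=n/2$ and $\sum_{j\geq 0}j\,\kappa^{-j}<\infty$, the product converges as $k\to\infty$ and the cumulative power of $r$ is exactly $-n/2$. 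Letting $k\to\infty$ and using $\|u\|_{L^{p}(B_{r})}\to\sup_{B_{r}}u$ produces the stated bound with $C_{2}=C_{2}(n,C_{S},c)$.

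I do not anticipate any essential obstacle, as the argument is classical. The one point that requires mild care is ensuring polynomial growth in $q$ of the constant in the Caccioppoli estimate, which the $Cq$ prefactor provides and which keeps the telescoping series convergent. All other geometric input (the Sobolev inequality, the existence of uniformly good cutoffs) is packaged into the bounded geometry hypothesis.
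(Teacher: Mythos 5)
Your argument is correct and is essentially the paper's own: a Caccioppoli estimate from testing against $\eta^{2}u^{2q-1}$, the uniform Sobolev inequality supplied by bounded geometry, and Moser iteration over exponents $\kappa^{k}=(n/(n-2))^{k}$ with a telescoping family of radii decreasing from $2r$ to $r$. The only detail the paper handles more explicitly is the standard truncation $u_{m}=\min\{u,m\}$, which ensures the test functions $\eta_{k}^{2}u_{m}^{2n_{k}-1}$ lie in $H_{0}^{1}(B_{2r})$ for $u$ merely in $L^{2}$; one runs the iteration with $u_{m}$ and lets $m\to\infty$ at the end.
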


With these preparation we are now ready to prove our first main estimate for the eigenforms of $\Box_{Tf}$.

\def\tb{\tilde{B}}
\begin{proof}[Proof of Theorem \ref{est}]
Consider an eigenform $\o$ of $\Box_{Tf}$. That is  $\Box_{Tf}\o=\mu(T) \o$, where the eigenvalue $\mu(T)$ satisfies $|\mu(T)|\leq c$ for some constant $c$. Then letting $u=g(\o,\o)^{1/2}$, by a straightforward computation using the Bochner's formula (for forms) and the Kato's inequality, we have
\[\Box_{Tf}u\leq (c+|R|+T|\nabla^2 f|)u,\]
where $|R|$ is the upper bound of curvature tensor. Hence by Corollary \ref{wL2e}, we have,  for $T\geq \max \{(c+|R|+2c_f)C_2, \sqrt{\frac{1-b^2 +8c+8|R|}{1-b^2}} /(b\epsilon_f) \}$,
\[I(u)=\int_{M}|u|^2\exp(2b\rho_T)\dvol \leq CT^2  \|u\|^2\]
where the constant $C=C(C^L, C_L, r_0, b, c, |R| )$.

Recall that the compact set $K$ is chosen so that (\ref{conditionK}) is satisfied. Hence by Proposition \ref{estihdg},
the conditions of Lemma \ref{moser} are satisfied for $u$ on $M-K$. Also, the Agmon distance $\rho_T(p)$ is the distance between $p$ and $K$ induced by $\gt$ and $L=\{p\in M:\rho_T(p)\leq 2\}.$ Suppose $p\in M-L$. Denote by $\tb_r(p)$ the $\gt$-geodesic ball around $p$ with radius $r$.  Set $l=\sup_{q\in \tb_2(p)}|T\nabla f|(q)$, and $r=1/(2l).$ Then one  can easily verify that $B_{2r}(q)\subset \tb_2(p),\ \forall q\in \tb_1(p).$

Choose $q_0\in \overline{\tb_2(p)}$ so that $|T\nabla f|(q_0)\in (l/2,l].$
By Lemma \ref{tech1} and de Giorgi-Nash-Moser estimate Lemma \ref{moser}, we have
\begin{align*}
|u(p)|^2\exp(2b\rho_T(p))&\leq \frac{C_2}{r^{n}}\|u\|^2_{L^2(B_{2r}(p))}\exp(2b\rho_T(p))\\
&\leq\frac{C_3}{r^{n}}\int_{\tb_2(p)}|u|^2(q)\exp(2b\rho_T(q))\dvol \\
&\leq {C_4}|T\nabla f(q_0)|^{n}I(u).
\end{align*}
We will prove that \begin{equation}\label{tbp}|\nabla f(q_0)|^2 \leq C_5\exp(\frac{2c_f}{bT} \rho_T(q_0))\end{equation} in Lemma \ref{remm}. Hence,
\[ |\nabla f(q_0)|^2 \leq C_6\exp(\frac{2c_f}{bT}\rho_T(p)) \leq C_6\exp(\epsilon \rho_T(p)) \]
for any small $\epsilon$, provided $T\geq \frac{2c_f}{b\epsilon}$.
It follows then that,
\[|u(p)|^2\leq C_6I(u)T^n\exp(-2a\rho_T(p)),\]
for any $a<b$ provided $T\geq \frac{nc_f}{b(b-a)}$. Hence if $T\geq \max \{(c+|R|+2c_f)C_2, \sqrt{\frac{1-b^2 +8c+8|R|}{1-b^2}} /(b\epsilon_f),  \frac{nc_f}{b(b-a)} \}$,
\[|u(p)|^2\leq C_6C(C^L, C_L, r_0, b, c, |R| )T^{n+2} \exp(-2a\rho_T(p)) \|u\|^2.\]
\end{proof}
\begin{rem}
The proof above gives the  inequality for $p\in M-L=\{\rho_T(p)>r_0\}$ for some constant $r_0$ independent of $T$, which is what we needed for later applications. For $p\in L$, using the same reasoning as in Remark \ref{indepentoft}, there exist constant $C>0$, which is independent of $T$, such that 
\begin{equation}
\Delta u\leq CTu.
\end{equation} 
for all $p\in L$. Therefore via Moser iteration as in Lemma \ref{moser} and similar arguments as above, one can show that
\[|u|^2(p)\leq C'T^{n}\|u\|_{L^2}^2\leq C'\exp(2a)T^{n}\exp(-a\rho_T)\|u\|_{L^2}^2.\]
\end{rem}
\section{Morse inequalities}\label{MorseInequality}
In this and the next section, we assume that $f$ is a Morse function on $M$, and $T\geq T_0$. In fact, we assume that in a neighborhood $U_x$ of critical points $x$ of $f$, we have coordinate system $z=(z_1,...,z_n),$ such that
\begin{align}\begin{split}\label{flatMetric}
f=-z_1^2-...-z_{n_f(x)}^2+z_{n_f(x)+1}^2+...+z_n^2,
g=dz_1^2+...+dz_n^2.
\end{split}\end{align}
This is a generic condition.
Without loss of generality we assume that $U_x$ is an Euclidean open ball around $x$ with radius $1.$ Also, these open sets are disjoint.

Let $F_{Tf}^{[0,1],*}$ be the space spanned by the eigenforms of $\Box_{Tf}$ with eigenvalue lying in $[0,1].$ By Theorem \ref{discrete}, $F_{Tf}^{[0,1],*}$ is finite dimensional. Recall that $m_i$ denotes the number of critical points of $f$ with Morse index $i.$ We have the following Proposition:
\begin{prop}\label{mo}
There exists $T_3>T_0$ big enough, so that whenever $T\geq T_3,$ the number of eigenvalues (counted with multiplicity) in $[0,1]$ of $\Box_{Tf}|_{\Omega^i_{(2)}(M)}$ equals $m_i$. I.e. $\dim F_{Tf}^{[0,1],*}=m_i$.
\end{prop}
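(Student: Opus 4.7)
The plan is to prove equality $\dim F_{Tf}^{[0,1],i} = m_i$ by establishing a matching lower bound and upper bound via the min-max principle, comparing $\Box_{Tf}$ with its harmonic-oscillator model near each critical point. Since Theorem \ref{discrete} guarantees that the spectrum in $[0,\delta^2T^2]$ is discrete with finite multiplicity for $T$ large, min-max applies without modification in the noncompact setting.

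For the lower bound $\dim F_{Tf}^{[0,1],i} \geq m_i$, I would construct, for each critical point $x$ of index $k$, a localized model $i$-form supported in $U_x$. Using the flat coordinates of \eqref{flatMetric}, Proposition \ref{estihdg} shows that on $U_x$ one has $\Box_{Tf} = \sum_j\bigl(-\partial_{z_j}^2 + T^2 z_j^2 + T[e^j\wedge,\iota_{e_j}]\cdot\epsilon_j\bigr)$ with $\epsilon_j=-1$ for $j\leq k$ and $\epsilon_j=+1$ for $j>k$. The unique (up to scalar) ground state of this model is the Gaussian $k$-form $\omega_x = (T/\pi)^{n/4} e^{-T|z|^2/2}\, dz_1\wedge\cdots\wedge dz_k$, which lies in the kernel of the model. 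Setting $\tilde\omega_x=\chi_x\omega_x$ for a smooth cutoff $\chi_x$ with $\chi_x\equiv 1$ near $x$ and $\mathrm{supp}\,\chi_x\subset U_x$, the Gaussian decay yields $\|\tilde\omega_x\|_{L^2}=1+O(e^{-cT})$ and $\|\Box_{Tf}\tilde\omega_x\|_{L^2}=O(e^{-cT})$ for some $c>0$. Since the $U_x$ are disjoint, the $\tilde\omega_x$ of a fixed degree $i$ form an $L^2$-orthonormal family of size $m_i$. The min-max principle applied to $\Box_{Tf}|_{\Omega^i_{(2)}(M)}$ then produces $m_i$ eigenvalues bounded by $O(e^{-cT})\leq 1$ once $T\geq T_3$.

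For the upper bound $\dim F_{Tf}^{[0,1],i} \leq m_i$, I would use an IMS-type localization. Choose smooth functions $\psi_0,\psi_1$ with $\psi_0^2+\psi_1^2=1$, $\psi_0$ supported in $\bigcup_x U_x$ and equal to $1$ on a smaller neighborhood, and $\psi_1$ supported where $|\nabla f|\geq c_0>0$. Apply the IMS identity
\begin{equation*}
\langle \Box_{Tf}\phi,\phi\rangle = \sum_{j=0,1}\langle \Box_{Tf}(\psi_j\phi),\psi_j\phi\rangle - \sum_{j=0,1}\bigl\||\nabla\psi_j|\phi\bigr\|^2 .
\end{equation*}
On $\mathrm{supp}\,\psi_1$ the lower bound from Lemma \ref{t0} (with $K$ enlarged to contain $\bigcup_x U_x$ as announced before Lemma \ref{t0}) gives $\langle\Box_{Tf}(\psi_1\phi),\psi_1\phi\rangle \geq (cT^2 - C)\|\psi_1\phi\|^2$, which far exceeds $1$ for $T$ large. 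On $\mathrm{supp}\,\psi_0$ the operator is a bounded perturbation (by a $T$-independent curvature/cutoff term) of the direct sum of the harmonic-oscillator models at each $x$; the model spectrum is $T\bigl(2|\alpha|+\sum_j\epsilon_j\eta_j+n\bigr)$ with $\eta_j=\pm1$, so on $i$-forms it has exactly $m_i$ zero eigenvalues and all other eigenvalues $\geq 2T$. Thus the restriction of the quadratic form of $\Box_{Tf}$ to the $L^2$-orthogonal complement (inside $\Omega^i_{(2)}(M)$) of the $m_i$-dimensional model kernel is bounded below by $2T - O(1) > 1$. By the min-max principle this forces $\dim F_{Tf}^{[0,1],i} \leq m_i$.

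The main obstacle is the upper bound: one must justify the IMS localization in the noncompact setting and quantify the comparison with the model operator. The first is routine since $\psi_0,\psi_1$ are compactly supported bounded multipliers preserving the domain. The delicate point is the spectral comparison on $\mathrm{supp}\,\psi_0$ — one needs that the error between $\Box_{Tf}\psi_0\phi$ and the harmonic-oscillator $\Box_{Tf}^{\mathrm{mod}}\psi_0\phi$ is of lower order in $T$ on the spectral subspaces under consideration; this follows from \eqref{flatMetric} because the metric and the function are exactly Euclidean on each $U_x$, so the only discrepancy between $\Box_{Tf}$ and the model comes from the fact that $\psi_0\phi$ is not genuinely compactly supported in $\mathbb{R}^n$ but in $U_x$ — handled by the Gaussian decay of the low eigenvectors of the model, which confines their mass well inside $U_x$ for $T$ large.
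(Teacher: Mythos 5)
Your approach is correct in spirit and arrives at the right decomposition of the problem, but it takes a genuinely different technical route from the paper. The paper simply transplants the proof of Proposition~5.5 of Zhang's lectures (the Bismut--Zhang resolvent/projection argument), and observes that the only ingredient that must be re-established in the noncompact setting is the a priori estimate $\|\Box_{Tf}\phi\|_{L^2}\geq CT\|\phi\|_{L^2}$ for $\phi$ supported in $M-\bigcup_x U_x$; this is stated as an auxiliary proposition and proved from the well-tameness bounds exactly as in Zhang's Proposition~4.7. You instead run a semiclassical min-max argument with IMS localization. Both routes are legitimate, and both reduce the noncompact issue to the same key estimate away from the critical points. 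What the paper's route buys is economy: everything except that one estimate is quoted verbatim. What your route buys is a self-contained, transparent localization argument that does not presuppose familiarity with Zhang's resolvent machinery.

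There is, however, a genuine gap in your treatment of the exterior region. You justify the bound $\langle\Box_{Tf}(\psi_1\phi),\psi_1\phi\rangle\geq (cT^2-C)\|\psi_1\phi\|^2$ by invoking Lemma~\ref{t0} ``with $K$ enlarged to contain $\bigcup_x U_x$.'' But Lemma~\ref{t0}, for any choice of compact $K$, produces a lower bound of the form
\[
\int_M(\Box_{Tf}\phi,\phi)\,\dvol \;\geq\; \int_{M-K}b^2T^2|\nabla f|^2|\phi|^2\,\dvol \;-\;(C_R+TC_K)\int_K|\phi|^2\,\dvol,
\]
i.e.\ the coercive term is supported only on $M-K$, and the term over $K$ is \emph{subtracted}, with a coefficient that is \emph{linear in $T$}. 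Since $\mathrm{supp}\,\psi_1$ must sweep out the whole collar between $\bigcup_x U_x$ and infinity, it necessarily meets $K$, and on $\mathrm{supp}\,\psi_1\cap K$ the lemma as stated gives you nothing positive. To close the gap you need the stronger statement that $|\nabla f|\geq c_0>0$ and $|\nabla^2 f|\leq\delta_2|\nabla f|^2$ hold on \emph{all} of $M-\bigcup_x U_x$ (the first by compactness on the annular region and by well-tameness at infinity, the second by well-tameness), so that the zeroth-order part $T^2|\nabla f|^2 - T\nabla^2 f\cdot[\cdot,\cdot]$ is $\geq cT^2 - O(T)$ pointwise there. That is precisely the auxiliary proposition the paper isolates. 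So the correct citation is not Lemma~\ref{t0} but that proposition (or a direct re-derivation following Zhang's Proposition~4.7). A secondary, smaller issue: your remark that the discrepancy with the model on $\mathrm{supp}\,\psi_0$ is ``handled by the Gaussian decay of the low eigenvectors of the model'' is circular as phrased, since you are trying to locate the low eigenvectors of $\Box_{Tf}$, not of the model; the standard fix is to project $\psi_0\phi$ onto the orthocomplement of the \emph{model} ground states inside each $U_x$ and use the model gap $\geq 2T$ there, rather than invoking decay of the unknown eigenforms.
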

\begin{rem}
See the definition of $T_0$ in (\ref{T0}). Also recall that, if $(M,g,f)$ is strongly tame, $T_0=0.$
\end{rem}
The proof of Proposition \ref{mo} follows from that of Proposition 5.5 in \cite{zhang2001lectures}, except for the proof of the following proposition:
\begin{prop}
There exist constants $C > 0$, $T_4 > 0$ such that for any
smooth form $\phi \in \Omega^*_{(2)}(M)$ with ${\rm supp}(\phi) \subset M-\cup_{x\in \Crit{f}}U_x$ and $T \geq T_4$, one has
\[\|\Box_{Tf}\phi\|_{L^2}\geq CT\|\phi\|_{L^2}.\]

Here ${\rm supp}(\phi)$ denotes the support of $\phi.$
\end{prop}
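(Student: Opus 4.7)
The plan is to prove the stronger quadratic form bound $\langle \Box_{Tf}\phi,\phi\rangle_{L^2} \geq cT^2\|\phi\|_{L^2}^2$ on the given support class, and then to obtain the claimed operator bound via Cauchy--Schwarz: once $\langle \Box_{Tf}\phi,\phi\rangle \geq cT^2\|\phi\|^2$ we have $\|\Box_{Tf}\phi\|_{L^2}\|\phi\|_{L^2} \geq cT^2\|\phi\|^2$, so $\|\Box_{Tf}\phi\|_{L^2}\geq cT^2\|\phi\|_{L^2}\geq CT\|\phi\|_{L^2}$ for $T\geq 1$.

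The first step is a geometric observation: since $f$ is Morse with only finitely many critical points, all of which lie in the interior of $\cup_x U_x$, and since $(M,g,f)$ is well tame so that $|\nabla f|^2\geq \delta_1^2$ outside the compact set $K$ from Section \ref{sowl}, there is a positive constant $c_0$, depending only on $f$ and the choice of neighborhoods $U_x$, with $|\nabla f|\geq c_0$ on all of $M-\cup_x U_x$. After enlarging $K$ if necessary, we may assume $\cup_x U_x\subset K$, so the support of $\phi$ decomposes as a compact piece in $K-\cup_x U_x$ plus a possibly noncompact piece in $M-K$.

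Next I would apply Proposition \ref{estihdg} and integrate by parts (first for $\phi$ smooth and compactly supported, and then extend by density) to write
\begin{equation*}
\langle \Box_{Tf}\phi,\phi\rangle_{L^2} \;=\; \|\nabla\phi\|_{L^2}^2 \;+\; T^2\!\int_M |\nabla f|^2 |\phi|^2\,\dvol \;-\; T\!\int_M \langle \nabla^2_{e_i,e_j}f\,[e^i\wedge,\iota_{e_j}]\phi,\phi\rangle\,\dvol.
\end{equation*}
The bracket $[e^i\wedge,\iota_{e_j}]$ is bounded as an endomorphism of $\Lambda^*T^*M$ by a dimensional constant $C_n$, so the last term is dominated in absolute value by $C_nT\int |\nabla^2 f||\phi|^2$. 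On the compact part $K-\cup_x U_x$ we have $|\nabla f|\geq c_0$ and $|\nabla^2 f|\leq C_K$ by continuity, so the integrand $T^2|\nabla f|^2 - C_n T|\nabla^2 f|$ is at least $\tfrac{1}{2}T^2 c_0^2$ once $T$ is large. On the noncompact part $M-K$ the well-tame estimate $|\nabla^2 f|\leq 2c_f |\nabla f|^2$ gives $T^2|\nabla f|^2 - C_nT|\nabla^2 f|\geq T|\nabla f|^2 (T-2C_nc_f)\geq \tfrac{1}{2}T^2 c_0^2$ for $T$ large. Combining yields $\langle \Box_{Tf}\phi,\phi\rangle\geq \tfrac{1}{2}c_0^2 T^2 \|\phi\|_{L^2}^2$ for $T\geq T_4$, and Cauchy--Schwarz finishes the proof.

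The main obstacle is the middle term involving $\nabla^2 f$, which is a priori unbounded at infinity and cannot be treated as a small perturbation in a naive way; the well-tame inequality $|\nabla^2 f|\leq c_f|\nabla f|^2$ outside $K$ is precisely what allows the positive $T^2|\nabla f|^2$ term to absorb it on the noncompact region, while compactness of $K-\cup_x U_x$ trivially handles the bounded region.
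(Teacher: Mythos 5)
Your plan is essentially the paper's approach, which simply observes that by tameness plus compactness of $(M-\cup_x U_x)\cap K$ one has uniform bounds $|\nabla f|\ge\delta_1$ and $|\nabla^2 f|\le\delta_2|\nabla f|^2$ on the support, and then defers to the standard quadratic form argument from Proposition 4.7 of Zhang's lecture notes; you spell that argument out rather than citing it. One small imprecision: the identity $\langle\Box_{Tf}\phi,\phi\rangle = \|\nabla\phi\|^2 + T^2\int|\nabla f|^2|\phi|^2 - T\int\langle\cdots\rangle$ is not quite right — integrating by parts on the Hodge Laplacian gives $\|d\phi\|^2+\|\delta\phi\|^2$, and passing to $\|\nabla\phi\|^2$ via Bochner--Weitzenb\"ock introduces a curvature term $\langle R\phi,\phi\rangle$ that you drop. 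Since the geometry is bounded this term is uniformly bounded and easily absorbed for $T$ large (or you can simply keep $\|d\phi\|^2+\|\delta\phi\|^2\ge 0$ and never convert to $\|\nabla\phi\|^2$), so the conclusion is unaffected, but the equality as written should be a lower bound with the curvature correction.
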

\begin{proof}
Since $f$ is well tame, there exist $\delta_1,\delta_2>0,$ s.t. $|\nabla f|\geq \delta_1,$ also $|\nabla^2f|\leq \delta_2 |\nabla f|^2$ on $M-\cup_{x\in \Crit{f}}U_x$.
Then our proposition follows from the same argument in Proposition 4.7 of \cite{zhang2001lectures}.
\end{proof}

On the other hand, $(F_{Tf}^{[0,1],*}, d_{Tf})$  form a complex, the so called Witten instanton complex, whose cohomology is $H^*_{(2)}(M,d_{f})$ by Theorem \ref{kodairahodge}.
As a result, our Theorem \ref{morse} (the strong Morse inequalities) follows from Proposition \ref{mo} and our Hodge theorem when $T>T_3$. For the case of $T\in (T_0,T_3]$, see section \ref{tae}.
\section{Thom-Smale theory}\label{tsw}

In this section, we assume that $K$ is a compact subset of $M$, $\epsilon>0$ is small enough (to be determined later), $T_5=T_5(\epsilon)$ is big enough, such that outside $K$, we have 
\begin{equation}T|\nabla^2f|\leq \epsilon\, T^2|\nabla f|^2, \label{2tc} \end{equation} 
  provided $T\geq T_5$.

Moreover, we make a more judicious choice of $K.$  Fix any $p_0\in M$. Set
 \begin{eqnarray*} D & = & \sup_{x,y\in \Crit(f)}{\tilde{d}_T(x,p_0)+\tilde{d}_T(x,y)}  \\ 
 & = & \sup_{x,y\in \Crit(f)} bT(|f(x)-f(p_0)|+|f(x)-f(y)|),\end{eqnarray*}
 where $\tilde{d}_T$ is the distance function induced by $\gt$ (note that the Agmon distance $\rho_T(x)$ is the same distance function but between $x$ and a compact subset $K$). The second equality follows from the claim in the proof of Lemma \ref{rho}.
 We choose $K$ so that that $$\tilde{B}_{D+1}(p_0)=\{p\in M: \tilde{d}_T(p,p_0)\leq D+1\}\subset K^\circ$$ where $K^\circ$ denotes the interior of $K.$

\begin{rem}
We can take $T_5=0$ if $(M,g,f)$ is strongly tame.
\end{rem}

Now we set 
\begin{equation}\label{T0}
  T_0=\max\{T_1,T_2,T_5\}.  
\end{equation}

(Note that the definition of $T_0$ does not involve $T_3$; Cf. (\ref{t1}) and (\ref{t2}) for the description of $T_1$ and $T_2$.)
 Before defining the Thom-Smale complex, there is still a subtle issue for noncompact cases. That is, the gradient vector field $-\nabla f$ may not be complete, i.e., its flow curves may not exist for all time. But notice that if we rescale the vector field by some positive function, we actually get the reparameterization of flow curves.

For this purpose, we fix a positve smooth function $F$ such that
\begin{equation*}
    F|_{M_K}=\frac{1}{T^2|\nabla f|^2}.
\end{equation*}

Then we have 
\begin{lem}\label{complete}
$-F\nabla f$ is a complete vector field.
\end{lem}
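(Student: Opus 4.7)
The plan is to show that the vector field $-F\nabla f$ has globally bounded norm on $M$, and then invoke the standard fact that a vector field of bounded norm on a complete Riemannian manifold is complete.

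First I would split $M$ into the compact piece $K$ and its complement. On $K$, the function $F$ is smooth and positive on a compact set, so it is bounded above; similarly $|\nabla f|$ is bounded on $K$. Hence $|F\nabla f|_g$ is bounded by some constant $C_1$ on $K$. On $M \setminus K$, by definition $F = 1/(T^2 |\nabla f|^2)$, so
\begin{equation*}
|F \nabla f|_g = \frac{|\nabla f|}{T^2 |\nabla f|^2} = \frac{1}{T^2 |\nabla f|} \le \frac{1}{T^2 \delta_1},
\end{equation*}
using the well-tameness condition $|\nabla f| \ge \delta_1$ on $M-K$ from \eqref{conditionK}. Setting $C = \max\{C_1, 1/(T^2 \delta_1)\}$, we conclude $|F\nabla f|_g \le C$ everywhere on $M$.

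Next I would argue completeness of the flow. Let $\gamma:[0,a) \to M$ be any maximal integral curve of $-F\nabla f$; then $|\gamma'(t)|_g \le C$, so for $0 \le s < t < a$ the $g$-length of $\gamma|_{[s,t]}$ is at most $C(t-s)$, and in particular $\gamma$ is a uniformly Cauchy curve as $t \to a^-$. Since $(M,g)$ is complete (it has bounded geometry, hence is geodesically complete), Hopf-Rinow gives that any such Cauchy sequence $\gamma(t_n)$ converges to some $p_\infty \in M$ and that the closure of $\gamma([0,a))$ is compact. Standard ODE theory for smooth vector fields then allows the solution to be extended past $a$, contradicting maximality unless $a = \infty$. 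The same argument works for the backwards direction, so $-F\nabla f$ is complete.

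The argument is essentially routine; the only step that requires the structural hypotheses on $(M,g,f)$ is the uniform lower bound $|\nabla f| \ge \delta_1$ off $K$, which is exactly what makes the reciprocal factor $1/(T^2|\nabla f|^2)$ in the definition of $F$ harmless at infinity. No genuine obstacle is anticipated.
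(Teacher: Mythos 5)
Your proof is correct, but it takes a genuinely different route from the paper's. You observe that $|F\nabla f|_g$ is globally bounded, using $F$ and $|\nabla f|$ bounded on the compact set $K$ and $F\,|\nabla f| = 1/(T^2|\nabla f|) \le 1/(T^2\delta_1)$ on $M-K$ by \eqref{conditionK}, and then invoke the standard escape lemma: a vector field of bounded norm on a complete Riemannian manifold is complete. The paper instead rescales the metric to $F^{-1}g$ on $M-K$, where $-F\nabla f$ has constant norm $1/T$; it then identifies the flow lines as geodesics of $(M,F^{-1}g)$ (the content of Lemma~\ref{rho}, which it needs anyway for the Agmon-distance estimates) and appeals to the completeness of that rescaled metric, after reducing via compactness of $L=\{p:\td(p,K)\le 1\}$ to trajectories starting in $M-L$. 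Your argument is shorter and more elementary, avoiding the rescaled-metric machinery at the cost of re-deriving the escape-to-infinity argument from scratch; the paper's approach is slightly less self-contained but reuses structure (the geodesic characterization in $\gt$) that is essential elsewhere in the paper. Both hinge on the same fact, namely the uniform lower bound $|\nabla f|\ge\delta_1$ outside $K$ which keeps $F$ from blowing up at infinity.
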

\begin{proof}
Let $\tilde{\Phi}^t$ be the flow generated by $-F\nabla f$.
We show that for any $p\in M$, there exists a universal $\epsilon_0>0$, s.t. $\tPhi(p)$ is well defined on $(-\epsilon_0,\epsilon_0)$. Hence, $-F\nabla f$ is complete. 

Let $L:=\{p\in M:\td(p,K)\leq 1\}$. It suffices to show that for any $p\in M-L,$ $\tPhi(p)$ is well defined on $(-1,1)$, since $L$ is compact.

But on $M-K$, $F^{-1}g(-F\nabla f,-F\nabla f)=\frac{1}{T^2}$, and $(M,F^{-1}g)$ is complete, hence $\tPhi(p)$, $t\in(-1,1)$ is a geodesic (See Lemma \ref{rho}) inside $M-K$  for $p\notin L$.

\end{proof}


Let $x$ be a critical point of the Morse function $f$, $W^s(x)$ and $W^u(x)$ be the stable and unstable manifold of $x$ with respect to flow $\tilde{\Phi}^t$ defined in Lemma \ref{complete} (See Chapter 6 in \cite{zhang2001lectures}). We will further assume that $f$ satisfies the Smale transversality condition, namely $W^s(x)$ and $W^u(y)$ intersect transversally. Then the Thom-Smale complex $(C_*(W^u),\partial)$ is defined by
\[C_*(W^u_f)=\oplus_{x\in \Crit(f)}\R W^u(x),\]
and
\[C_i(W^u)=\oplus_{x\in \Crit(f),n_f(x)=i}\R W^u(x).\]
To define the boundary operator, let
$x$ and $y$ be  critical points of $f$, with $n_f(y) = n_f(x) - 1$.

For $x\in \Crit(f),$ set
\[\patt W^u(x)=\sum_{y\in \Crit(f),n_f(y)=n_f(x)-1}m(x,y)W^u(y).\]
Here the integer $m(x,y)$ is the signed counts of the flow lines in $W^s(y)\cap W^u(x),$.

 \begin{rem}\label{fgoestoinfinity}
With our nice choice of $K$ and $F$, it is easy to see that for any $x,y\in \Crit(f)$, $W^s(x)\cap W^u(y)\subset K^\circ$. Moreover, for any $p\in W^u(x)-K$, the curve $\{\tPhi(p):t\geq 0\}\cap K=\emptyset$ and $\lim_{t\to\infty}|f|(\tPhi(p))\approx\lim_{t\to\infty}\rho_T(\tPhi(p))=\infty.$ Moreover, since $\tilde{d}_T(p,p_0)=T\tilde{d}_1(p,p_0)$, we can actually choose $K$, such that it is independent of $T.$ Thus, just like the compact case, by the transversality, $m(x,y)$  is well defined. 
 \end{rem} 

We will prove in Section 7.3 that under our tameness condition, $\patt^2=0.$ Thus, $C_*(W^u,\patt)$ is a complex.

\def\P{\mathcal{P}}
\def\J{\mathcal{J}}
Recall that the Witten instanton complex $F_{Tf}^{[0,1],*}$ is the finite dimensional space generated by the eigenforms of $\Box_{Tf}$ with eigenvalue lying in $[0,1]$.  By the discussion in the previous subsection, the cohomology of the Witten instanton comple is $H^*_{(2)}(M,d_{f})$.

To prove Theorem \ref{wts}, we now consider the following chain map $\J: (F_{Tf}^{[0,1],*},d_{Tf})\mapsto C^*((W^u)',\patt').$ Here $C^*((W^u)',\patt')$ denote the dual chain complex. Let $W^u(x)'$ be the dual basis of $W^u(x).$  Then
 
$$\J\o= \sum_{x\in\Crit(f)}W^u(x)'\int_{W^u(x)}\exp(Tf)\o.$$ 
However there is a technical issue here we need to address. 
When $\overline{W^u(x)}$ is compact, the integral $\int_{W^u(x)}\exp(Tf)\o$ is clearly well defined, but $\overline{W^u(x)}$ here may be noncompact.
We will be content here only with the wel-definedness of the map and leave the proof that $\J$ is indeed a chain map to Section 7.3.

\def\dr{B_r^{n_f(x)}(x)}
Let $r>0$ small enough, $B_r^{n_f(x)}(x)\subset K$ be the $n_f(x)$-dimensional ball in $W^u(x)$ with center $x$ and  radius $r$ with respect to metric $g.$
As before, let $\tPhi$ be the flow generated by $-F\nabla f$. Then  $W^u(x)=\lim_{t\to\infty}\tPhi(\dr).$

Therefore, for any $\o\in F^{[0,1],*}_{Tf}$
\begin{align*}
&|\int_{W^u(x)}\exp(Tf)\o|=|\lim_{t\to\infty}\int_{(\tPhi)(\dr)}\exp(Tf)\o|\\
&\leq C \exp(Tf(x))\lim_{t\to\infty}\int_{\dr}|(\tPhi)^*\o||\det((\tPhi)_*)|\dvol
\end{align*}
\def\nf{n_{f}(x)}
The well definedness of $\J$ is now reduced to the following two technical lemmas, as well as Theorem \ref{est} and the well tameness of $(M,g,f)$.

\begin{lem}\label{vol}
Fix any $y\in\dr-\tilde{\Phi}^{-t}K,$ we have
\begin{align*}  |\Phi^t_*(y)| & \leq C_7(T)\exp(\epsilon\rho_T(p)) \\
\end{align*}
Hence,
\begin{align*}  |\det(\Phi^t)_*(y)|\leq C_7(T)\exp(n_f(x)\epsilon\rho_T(p))\end{align*}
Here $C_7$ is a constant independent of $y.$
\end{lem}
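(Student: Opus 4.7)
The natural strategy is Gr\"onwall's inequality for the variational (Jacobi) equation. Writing $X=-F\nabla f$ for the generator of the flow, a Jacobi field $J(s)=(\tilde\Phi^s)_*(y)v$ satisfies $\dot J=\nabla_J X$, so
\[|(\tilde\Phi^t)_*(y)|\leq \exp\left(\int_0^t |\nabla X|(\tilde\Phi^s y)\,ds\right).\]
A direct computation using $F=1/(T^2|\nabla f|^2)$ on $M\setminus K$ gives $\nabla X=-dF\otimes\nabla f-F\nabla^2 f$ and hence $|\nabla X|\leq 3|\nabla^2 f|/(T^2|\nabla f|^2)$, which by the smallness condition \eqref{2tc} is bounded by $3\epsilon/T$ outside $K$; on $K$ itself one has only a constant bound $|\nabla X|\leq C_K(T)$.

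The decisive geometric input is that, outside $K$, the orbits of $X$ are geodesics for the Agmon metric $\gt=b^2F^{-1}g$ traversed at the constant $\gt$-speed $b/T$. This is the same observation used in the proof of Lemma \ref{complete}: in the conformally rescaled metric $F^{-1}g$ one has $|\nabla_{F^{-1}g}f|^2=1/T^2$ constant, so integral curves of $-\nabla_{F^{-1}g}f=X$ are geodesics there, and multiplying the metric by the constant $b^2$ preserves geodesics. Let $\tau=\tau(y)$ be the exit time of the orbit from $K$, which is unique since by Remark \ref{fgoestoinfinity} a forward orbit that leaves $K$ cannot return. For $s\in(\tau,t]$ the orbit is a $\gt$-geodesic starting at $\tilde\Phi^\tau y\in\partial K$; since $\rho_T(\tilde\Phi^\tau y)=0$, the triangle inequality yields
\[\frac{b(t-\tau)}{T}=\td(\tilde\Phi^t y,\tilde\Phi^\tau y)\leq \rho_T(p)+D_T,\]
where $D_T:=\operatorname{diam}_{\gt}(K)$. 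Consequently $\int_\tau^t|\nabla X|\,ds\leq (3\epsilon/b)(\rho_T(p)+D_T)$, and choosing $\epsilon$ sufficiently small relative to the prescribed $\epsilon'$ (e.g. $\epsilon<b\epsilon'/3$) and absorbing $\exp(3\epsilon D_T/b)$ into $C_7(T)$ produces the desired factor $\exp(\epsilon'\rho_T(p))$.

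The step I expect to be the main obstacle is the inside-$K$ contribution $\int_0^\tau|\nabla X|\,ds\leq \tau\,C_K(T)$. As $y$ approaches the stable manifold of a lower critical point inside $K$, one has $\tau(y)\to\infty$, so the naive bound is inadequate. To handle this one must use that the Jacobi field is tangent to the flow-invariant submanifold $W^u(x)$ and invoke the Smale transversality condition. A $\lambda$-lemma-type argument, based on the normal form of $X$ near each intermediate critical point $z\in\overline{W^u(x)}\cap K$ (whose linearization is $-F(z)\operatorname{Hess}(f)(z)$), shows that the stable contraction along directions tangent to $W^u(x)$ compensates the unstable expansion, so that $|(\tilde\Phi^\tau)_*(y)|_{TW^u(x)}$ remains bounded uniformly on $\dr\setminus\tilde\Phi^{-t}K$ by a constant depending on $T$, $r$, $K$ but independent of $y$. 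Combining this uniform bound with the outside-$K$ estimate proves $|(\tilde\Phi^t)_*(y)|\leq C_7(T)\exp(\epsilon\rho_T(p))$, and the determinant estimate is then immediate from $|\det A|\leq |A|^{\nf}$ for a linear map on an $\nf$-dimensional space.
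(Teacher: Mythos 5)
Your approach is essentially the paper's: you set up the variational equation for a Jacobi field tangent to $W^u(x)$, apply Gr\"onwall's inequality using the pointwise bound on $\nabla Y_f$ that follows from (\ref{2tc}) outside $K$, and then convert flow time to Agmon distance via (essentially) Lemma \ref{rho}, namely the observation that outside $K$ the orbits of $-F\nabla f$ are $\gt$-geodesics of constant $\gt$-speed. The paper's proof of Lemma \ref{vol} consists of exactly these two ingredients and nothing else.

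Where you genuinely diverge from the paper is in flagging the inside-$K$ portion of the orbit. The paper applies the Gr\"onwall estimate as if the bound $|\nabla Y_f|\leq C\epsilon/T$ held for all $s\in[0,t]$, and lets the constant $C_8$ absorb whatever happens for $s\leq\tau(y)$; it does not discuss the fact that $\tau(y)$ is \emph{not} uniformly bounded on $\dr\setminus\tPhi^{-t}K$ (it tends to infinity as $y$ approaches the stable set of an intermediate critical point inside $K$). You have correctly identified this as the real difficulty, and the paper does not address it. However, I would not accept your proposed patch as stated. The $\lambda$-lemma near an intermediate critical point $z$ tells you that directions tangent to $W^u(x)$ split into ones aligned with $W^u(z)$ (which \emph{expand} like $e^{\lambda_u\tau}$) and ones in $T W^u(x)\cap T W^s(z)$ (which contract); the \emph{operator norm} $|(\tPhi^\tau)_*(y)|_{TW^u(x)}$ is controlled by the expanding directions and is therefore \emph{not} uniformly bounded as $\tau(y)\to\infty$, contrary to what you assert. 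What one can hope to control with this kind of normal-form analysis is the \emph{determinant} $|\det(\tPhi^t)_*|$ on $TW^u(x)$ --- which is in fact the only quantity used downstream, in estimating $\int_{W^u(x)}e^{Tf}\omega$ --- because there the contracting directions do offset the expanding ones (and one can further exploit $\int_{\dr}$ rather than a pointwise sup). So the issue you raise is real, but the route to closing it should be formulated directly for the Jacobian determinant along the unstable manifold, not for the operator norm; deriving the determinant bound from a (false) uniform operator-norm bound will not work.
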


\begin{proof}
Let $e$ be a unit tangent vector of $W^u(x)$ at $y,$ Extend $e$ to a local unit vector field near $y$ via parallel transport along radial geodesics. Denote 
\begin{equation}\label{yf}
 Y_f=-F\nabla f 
\end{equation}
Noting that from (\ref{2tc})

\[|\nabla_{\tPhi_*e}(\tPhi)_* (Y_f)|\leq \frac{\epsilon}{T}|(\tPhi)_*e|,\]
we have
\begin{align*}&|\frac{\partial}{\partial t}g((\tPhi)_*e(y),(\tPhi)_*e(y))|\\
&=2|g(\nabla_{(\tPhi)_*e(y)}(\tPhi)_*Y_f,(\tPhi)_*e(y))|\\
&\leq \frac{2\epsilon}{T}|g((\tPhi)_*e(y),(\tPhi)_*e(y))|.
\end{align*}

By a classical result in ODE, we have
\[g((\tPhi)_*e(y),(\tPhi)_*e(y)) \leq C_8\exp(\frac{2\epsilon t}{T}).\]

Now our lemma follows from teh following, Lemma \ref{rho}.
\end{proof}
\def\diam{\mathrm{diam}}

\begin{lem}\label{rho}
Suppose $t>0$ is big enough, $y\in \dr-\tilde{\Phi}^{-t} K$. Then there exists a constant $C_9=C_9(K,|\nabla f|^2|_K)>0,$ s.t.
\[|\rho_T(\tPhi(y))-\frac{t}{T}|<C_9T.\]
\end{lem}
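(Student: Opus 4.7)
The plan is to bracket the Agmon distance $\rho_T(\tilde{\Phi}^t(y))$ between explicit quantities computed directly from the gradient flow, using the flow curve itself for an upper bound and a universal inequality relating $\td$ to $|f|$-differences for a lower bound. The key preliminary is
\[\td(p,q)\geq bT|f(p)-f(q)|\qquad\forall\, p,q\in M.\]
For any absolutely continuous $\gamma$ from $p$ to $q$, pointwise Cauchy--Schwarz gives $|\nabla f|_g|\gamma'|_g\geq|df(\gamma')|$, so $\ell_{\gt}(\gamma)=\int bT|\nabla f|_g|\gamma'|_g\,ds\geq bT|f(p)-f(q)|$; infimizing over $\gamma$ proves the claim (and simultaneously justifies the ``claim in the proof of Lemma~\ref{rho}'' invoked in the definition of $D$, the matching upper bound there being realized by the gradient flow line joining the critical points).

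Next I compute the $f$-evolution along $\tilde{\Phi}^s$. Outside $K$, $F=1/(T^2|\nabla f|^2)$, so $\frac{d}{ds}f(\tilde{\Phi}^s(y))=df(Y_f)=-1/T^2$. Setting $s^*:=\sup\{s\in[0,t]:\tilde{\Phi}^s(y)\in K\}$, this integrates to $f(\tilde{\Phi}^t(y))=f(\tilde{\Phi}^{s^*}(y))-(t-s^*)/T^2$. The $\gt$-speed of the flow outside $K$ is $|Y_f|_{\gt}=bT|\nabla f|_g\cdot|Y_f|_g=b/T$, so the segment $\{\tilde{\Phi}^s(y):s\in[s^*,t]\}$ is a path from $K$ to $\tilde{\Phi}^t(y)$ of $\gt$-length $b(t-s^*)/T$, yielding the upper bound $\rho_T(\tilde{\Phi}^t(y))\leq b(t-s^*)/T$. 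Combining the universal inequality (applied to $p\in K$ and $q=\tilde{\Phi}^t(y)$) with the reverse triangle estimate $|f(\tilde{\Phi}^t(y))-f(p)|\geq (t-s^*)/T^2-2\max_K|f|$ yields the matching lower bound $\rho_T(\tilde{\Phi}^t(y))\geq b(t-s^*)/T-2bT\max_K|f|$.

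The remaining step -- and the primary technical obstacle -- is to convert these bounds (which are in terms of $t-s^*$) into a bound in $t$ alone, absorbing all compact-set contributions into the stated error $C_9T$. The corrections each scale at most linearly in $T$: the time $s^*$ spent inside $K$ is controlled uniformly in $y\in\dr$ via the bounds on $F$ and $|\nabla f|$ on $K$; the term $2bT\max_K|f|$ is already $O(T)$ in $K$-data; and the $\gt$-diameter of $K$ itself is $\leq bT\sqrt{\max_K|\nabla f|^2}\cdot\mathrm{diam}_g K$. The delicate point is verifying uniformity in both $y$ and $t$ of these corrections, which will follow from the uniform boundedness of $F$ and $|\nabla f|^2$ on the fixed $K$ chosen at the start of this section. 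Collecting the estimates produces an explicit $C_9=C_9(K,|\nabla f|^2|_K)$ independent of $T$ and $y$, as required.
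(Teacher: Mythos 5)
Your universal Cauchy--Schwarz inequality $\td(p,q)\geq bT|f(p)-f(q)|$ is the right key observation, and using the flow segment as a competitor path for the upper bound is also correct. This is in fact a genuinely more elementary route than the paper's: the paper first shows that $\tilde\Phi^s(y)$ is a $\gt$-geodesic outside $K$ by a direct Levi--Civita computation (using that $|\tna f|_{\gt}$ is constant), and then proves minimality by the ODE comparison $a'(s)=-1$ versus $b'(s)\geq -1$ along competing normal geodesics --- which is exactly your Cauchy--Schwarz observation packaged into the geodesic framework. Your route gets the lower bound in one line and skips the geodesic verification entirely. Both approaches then relate the Agmon distance $\rho_T(\tPhi(y))=\td(\tPhi(y),K)$ to the flow data, the paper via the triangle inequality $|\td(\tPhi(y),K)-\td(\tPhi(y),y)|\leq\diam_{\td}(K)=O(T)$.

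The gap is in your final step. You assert that the escape time $s^*$ is ``controlled uniformly in $y\in\dr$ via the bounds on $F$ and $|\nabla f|$ on $K$.'' That cannot be right: $\dr$ is a ball around the critical point $x$, where $|\nabla f|=0$, so the flow speed vanishes and the escape time diverges as $y\to x$; \emph{upper} bounds on $F$ and $|\nabla f|^2$ on $K$ give no upper bound on $s^*$. The hypothesis $y\in\dr-\tilde\Phi^{-t}K$ only ensures $s^*<t$, not a $t$-independent bound. What actually controls the inside-$K$ contribution in the paper's argument is not time but $\gt$-length: since $\frac{d}{ds}f(\tilde\Phi^s(y))=-F|\nabla f|^2$ and the $\gt$-speed of the flow is $bTF|\nabla f|^2$, the $\gt$-length of the segment $\tilde\Phi^{[0,s^*]}(y)\subset K$ equals $bT\,\bigl|f(y)-f(\tilde\Phi^{s^*}(y))\bigr|\leq 2bT\max_K|f|=O(T)$, \emph{independently of $s^*$} and of $y$. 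Replacing your attempted bound on $s^*$ by this length identity closes the gap and recovers the paper's bracket without ever estimating the escape time. (There is also a stray $b$-factor both in your bracket $\rho_T\approx b(t-s^*)/T$ and in the paper's statement $\td(\tPhi(y),y)=t/T$; the paper seems to carry the same sloppiness, so this is not a defect special to your proposal.)
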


\begin{proof}

For any $y\in \dr-\tPhi K$, we claim that $\tilde{\Phi(y)}^s,s\in[0,t]$ is one of the shortest smooth curve on $(M,\gt)$ connecting $y$ to $\tPhi(y).$

Granted, since $\gt(Y_f(p),Y_f(p))=\frac{1}{T^2}$ for all $p\notin K$, the claim gives
\begin{align*}
&|\rho_T(\Phi^t(y))-\frac{t}{T}|=|\td(\Phi^t(y),K)-\td(\Phi^t(y),y)| \\
&\leq T\sup_{p\in K}|\nabla f| \diam(K),
\end{align*}
where $\td$ is the distance induced by $\gt$, $\diam(K)$ is the diameter of $K$ with respect to metric $g.$ 

We now prove the claim (See \cite{Puits} Lemma A 2.2 for another proof):

\def\tphi{\Tilde{\Phi}}
\def\te{\tilde{e}^T}
\def\tna{\tilde{\nabla}^T}

First, Let's show that $\gamma(s):=\tphi^s(y), s\in[0,r]$ is a geodesic for any $r>0$:

Let $\te_1(s),...,\te_n(s)$ be a local orthomormal frame on $\gamma$ with $\te_1=-\tna f=\gamma'$. In order to prove $\gamma''=0,$ it suffices to prove $\gt(\gamma'',\te_i)=0,i\geq 2.$

Let $\tna$ be the Levi-Civita connection induced by $\gt,$ then
\begin{align*}
\gt(\gamma'',\te_i)&=\gt(\tna_{\tna f}\tna f,\te_i)\\
&=-\gt(\tna f,[\tna f,\te_i])\\
&=-[\tna f,\te_i]f\\
&=-\tna f\gt(\te_i,\tna f)+\te_i\gt(\tna f,\tna f)\\
&=0.
\end{align*}
We now prove that $\gamma$ is the shortest geodesic connecting $p$ and $\gamma(r)$ in $(M,\gt)$, for all $r>0$:

 Assume that $\sigma$ is another normal geodesic connecting $p$ and $\gamma(r)$ induced by $\gt.$ Then $\gt(\sigma'(0),\tna f(p))<1.$ Set $a(s)=f\circ\gamma(s),b(s)=f\circ\sigma(s),$ then we have $a(0)=b(0),$ and $a'(s)=-1,b'(s)=-\gt(\sigma'(s),\tna f\circ\gamma(s))\geq-1.$  Hence by a comparison theorem in ODE, we must have $a(s)\leq b(s).$
Assume that $\sigma(r')=\gamma(r),$ then we can see $r'=Length(\sigma),$ also we have $a(r')\leq b(r')=a(r).$ Since $a$ is decreasing, we must have $r'\geq r.$

By now, we can see that $\tilde{\Phi}^s(y),s\in [0,t]$ is one of shortest geodesic connecting $y$ and $\tPhi(y).$
\end{proof}

We now note the following lemma which plays an important role in estimating the eigenforms previously.
\def\tg{\tilde{g}_T}
\begin{lem}\label{remm}
Suppose $T\geq T_0.$ Then for any $q\in M$, there exists $C>0$, such that
\[|\nabla f|^2(q)\leq C\exp( \frac{c_f}{bT} \rho_T(q))\]
\end{lem}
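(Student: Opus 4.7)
The plan is to control $|\nabla f|^2$ along a $\tilde{g}_T$-minimizing curve from $K$ to $q$ by a direct Gronwall estimate on $\ln|\nabla f|^2$.

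When $q \in K$ the bound is trivial: $\rho_T(q)=0$, so one takes $C = \max_K |\nabla f|^2$. For $q \notin K$, I would pick a $\tilde{g}_T$-minimizing, unit-$\tilde g_T$-speed curve $\gamma:[0,\ell]\to M$ with $\gamma(0)\in \partial K$, $\gamma(\ell)=q$, and $\ell=\rho_T(q)$. By the minimality of $\gamma$, one has $\gamma(s)\notin K$ for $s\in(0,\ell]$ (any excursion back into $K$ would produce a shorter path), so the well-tameness inequality $(\ref{conditionK})$, namely $|\nabla^2 f| \leq \delta_2 |\nabla f|^2$ with $\delta_2 = 2c_f$, applies along the entire open arc. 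The unit-$\tilde g_T$-speed condition for $\tilde g_T = b^2 T^2 |\nabla f|^2 g$ translates into the crucial identity $|\gamma'|_g = 1/(bT|\nabla f|(\gamma))$.

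The main computation is then the one-line estimate
\[
\left|\tfrac{d}{ds}\ln|\nabla f|^2(\gamma(s))\right|
= \frac{2\,|\langle \nabla_{\gamma'}\nabla f,\nabla f\rangle|}{|\nabla f|^2}
\leq \frac{2\,|\nabla^2 f|\,|\gamma'|_g}{|\nabla f|}
= \frac{2|\nabla^2 f|}{bT|\nabla f|^2}
\leq \frac{2\delta_2}{bT},
\]
using Cauchy--Schwarz and $(\ref{conditionK})$. Integrating from $0$ to $\ell=\rho_T(q)$ and exponentiating yields
\[
|\nabla f|^2(q) \leq |\nabla f|^2(\gamma(0))\,\exp\!\Bigl(\tfrac{2\delta_2}{bT}\rho_T(q)\Bigr)
\leq \bigl(\max_K|\nabla f|^2\bigr)\exp\!\Bigl(\tfrac{4c_f}{bT}\rho_T(q)\Bigr),
\]
which is the stated inequality (up to an absorbable numerical factor in the exponent, consistent with the form invoked at $(\ref{tbp})$ and in Corollary~$\ref{wL2e}$).

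The only delicate point is producing the minimizing curve $\gamma$: the Agmon metric $\tilde g_T$ is conformally singular exactly where $\nabla f=0$, but well-tameness ensures that $\mathrm{Crit}(f)\subset K$, so on $M\setminus K$ the metric $\tilde g_T$ is smooth and genuinely Riemannian. A standard Hopf--Rinow-type argument---or, more concretely, taking an infimum over near-minimizing piecewise-smooth curves in $M\setminus K$ starting at $\partial K$ and then passing to a limit---delivers $\gamma$ without difficulty. Once $\gamma$ is at hand the rest is a short one-variable Gronwall inequality, so I do not expect any further serious obstacle.
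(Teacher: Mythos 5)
Your proof is correct and follows essentially the same route as the paper's: take a $\tilde g_T$-minimizing geodesic from $K$ to $q$ (unit $\tilde g_T$-speed, so $|\gamma'|_g = 1/(bT|\nabla f|)$), differentiate $|\nabla f|^2$ along it, bound the derivative via $|\nabla^2 f|\le\delta_2|\nabla f|^2$, and apply Gronwall. You are in fact slightly more careful than the paper about the constants (you track the factor of $2$ from $\tfrac{d}{ds}|\nabla f|^2 = 2\langle\nabla_{\gamma'}\nabla f,\nabla f\rangle$ and the factor $\delta_2 = 2c_f$, arriving at $4c_f/(bT)$ in the exponent rather than $c_f/(bT)$), which is harmless since the exponent is invoked only up to such multiplicative constants.
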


\begin{proof}

Let $\gamma:[0,\rho_T(q)]\mapsto M$ be a normal minimal $\tg$-geodesic connecting $p_0$ and $q$. Then we have
$g(\gamma',\gamma')=\frac{1}{b^2T^2|\nabla f|^2}$ outside $K.$ As $K$ is compact and $|\nabla f|$ is bounded on $K$, we will assume without loss of generality that $\gamma$ lies outside of $K$.

Let $h(t)=|\nabla f|^2\circ \gamma,$ then
\[h'(t)=g(\nabla_{\gamma'}\nabla f,\nabla f)\leq \frac{1}{bT}|\nabla^2f|\leq \frac{c_f}{bT} |\nabla f|^2= \frac{c_f}{bT}  h(t),\]

Hence $|\nabla f|^2(q)\leq C\exp( \frac{c_f}{bT}  \rho_T(q)). $
\end{proof}

Here we are gives a direct proof of the isomorphism of $H^*_{(2)}(M,d_{Tf})$ and $H^*_{dR}(M,U_c)$ under the assumption that $f$ is proper.

\begin{thm}\label{thm}
 Assume that $f$ is proper. Set $I=\inf_{p\in K}f(p),S=\sup_{p\in K}f(p)$ and fix $c>|I|+|S|+2$. Then for $U_c=\{p\in M:f(p)<-c\},$
$(\Omega_{(2)}^*(M),d_{Tf})$
  and $(\Omega^*(M,U_c),d)$ are quasi-isomorphic.
\end{thm}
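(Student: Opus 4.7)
The plan is to exploit the conjugation identities
\[
d\bigl(e^{Tf}\phi\bigr) \;=\; e^{Tf}\,d_{Tf}\phi,
\qquad
d_{Tf}\bigl(e^{-Tf}\phi\bigr) \;=\; e^{-Tf}\,d\phi,
\]
so that multiplication by $e^{\pm Tf}$ intertwines $d$ and $d_{Tf}$ on smooth forms. The natural candidate chain map is
\[
\Psi \colon \bigl(\Omega^*(M,U_c),d\bigr) \longrightarrow \bigl(\Omega^*_{(2)}(M),d_{Tf}\bigr), \qquad \phi \longmapsto e^{-Tf}\phi,
\]
which is well defined on forms vanishing on $U_c$ since then $e^{-Tf}\phi$ extends smoothly by zero across $\{f=-c\}$; the chain-map property $d_{Tf}\Psi=\Psi d$ is then automatic. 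The weight $e^{-2Tf}$ is bounded above by $e^{2Tc}$ on $\mathrm{supp}\,\Psi(\phi)\subset\{f\ge -c\}$ and decays super-exponentially on $\{f\to+\infty\}$, so for $T$ large $\Psi$ does land in $L^2$, at least on the subcomplex $\mathcal A^*\subset\Omega^*(M,U_c)$ of forms supported in some sublevel set $\{f\le R\}$; properness of $f$ makes such support compact (inside $\{-c\le f\le R\}$), and a standard exhaustion argument based on $f$ shows $\mathcal A^*\hookrightarrow\Omega^*(M,U_c)$ is a quasi-isomorphism.

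To prove $\Psi$ is itself a quasi-isomorphism I would construct a chain-level inverse up to homotopy. By the Kodaira-Hodge theorem (Theorem \ref{kodairahodge}) every class in $H^*_{(2)}(M,d_{Tf})$ has a unique harmonic representative $\omega\in\ker\Box_{Tf}$, and the Agmon estimate (Theorem \ref{est}) gives $|\omega(p)|\le Ce^{-a\rho_T(p)}$ for any $a<1$. Set $\Phi(\omega)=\chi\cdot e^{Tf}\omega$ with a smooth cutoff $\chi$ equal to $1$ on $\{f\ge -c+\epsilon\}$ and $0$ on $\{f\le -c\}$. On $U_c$ the Agmon distance $\rho_T$ is comparable to $bT|f|$ (via Lemma \ref{rho} and the fact that $\rho_T$ is the $\gt$-distance from $K$), so
\[
|e^{Tf}\omega(p)|\;\le\; C\,\exp\bigl(Tf(p)-a\rho_T(p)\bigr)
\]
decays super-exponentially as $f(p)\to -\infty$; hence $\Phi(\omega)$ is a bounded smooth form in $\Omega^*(M,U_c)$. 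The commutator error $d\Phi(\omega)-\Phi(d_{Tf}\omega)=d\chi\wedge e^{Tf}\omega$ is supported in the thin collar $\{-c\le f\le -c+\epsilon\}$ where $|e^{Tf}\omega|$ is exponentially small in $T$, and can be killed by a chain homotopy built from a local primitive of $d$ on that collar.

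The main technical hurdle is the last step: making the chain-homotopy argument rigorous, i.e.\ producing a homotopy operator killing $d\chi\wedge e^{Tf}\omega$ with norm bounds uniform in $\omega$. The super-exponential decay of Agmon eigenforms at spatial infinity is the essential ingredient here; without it the cutoff terms would not decay fast enough to be killed within the relevant complex. Once this is carried out, $\Psi\circ\Phi$ equals the identity on $\ker\Box_{Tf}$ modulo exact forms, and a dimension count combining the finite-dimensionality of $\ker\Box_{Tf}$ (Theorem \ref{discrete}) with the Morse inequalities (Theorem \ref{morse}) confirms that $\Psi$ induces an isomorphism on cohomology.
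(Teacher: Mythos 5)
There is a genuine gap. Your plan restricts the source complex to the subcomplex $\mathcal A^*$ of relative forms with compact support in $\{f\le R\}$ and claims that $\mathcal A^*\hookrightarrow\Omega^*(M,U_c)$ is a quasi-isomorphism ``by an exhaustion argument.'' This is false whenever $f$ is unbounded above, which is the generic situation here (indeed the paper explicitly introduces the second end $U_c'=\{f>c\}$ in this very proof). The simplest counterexample: $M=\R$, $f(x)=x$, $U_c=(-\infty,-c)$. There are no critical points, $\ker\Box_{Tf}=0$, and the relative de Rham cohomology $H^*(M,U_c)$ vanishes; but compactly supported $1$-forms vanishing on $U_c$ modulo $d$ of compactly supported functions vanishing on $U_c$ give $H^1(\mathcal A^\bullet)\cong\R$, via $\omega\mapsto\int\omega$. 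Compactly supported de Rham cohomology is a direct limit over an exhaustion, while ordinary (and relative) de Rham cohomology is not, so the inclusion is not a quasi-isomorphism: your map $\Psi$, which is only defined on $\mathcal A^*$, is therefore not tracking the right cohomology at the $f\to+\infty$ end. The paper's map runs in the opposite direction, from the finite-dimensional Witten instanton complex $F^{[0,1],*}_{Tf}$ into the cone complex $Cone^j=\Omega^j(M)\oplus\Omega^{j-1}(U_c)$, precisely so that the Agmon estimate controls everything, and the bijectivity argument in Subsection \ref{bijec} treats the $U_{c}$ and $U_{c}'$ ends separately using the rescaled gradient flow $\bar\Phi^s$ on both.

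The second gap is that the chain homotopy you invoke to cancel the commutator term $d\chi\wedge e^{Tf}\omega$ is exactly the hard content of the proof, not a routine remark. In the paper this is handled by the second component of $\L$,
\[
(-1)^j\int_0^\infty(\bar\Phi^s)^*\bigl(e^{Tf}\iota_{X_f}\omega\bigr)\,ds,
\]
which is a primitive along the flow; its convergence relies on the Agmon estimate plus Lemma \ref{rho}, and the verification that $\L$ is a chain map and then a bijection takes several pages. Finally, the closing ``dimension count via the Morse inequalities'' cannot establish that your $\Psi$ induces an isomorphism: the Morse inequalities are a priori only inequalities, and equality of dimensions (even if it held) would not show a specific linear map is invertible. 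The paper instead proves injectivity and surjectivity of $\L$ directly by producing explicit $L^2$ primitives, using the flow trick $\psi'(p)=\psi(p)-d\bigl(\eta\int(\bar\Phi^s)^*\iota_{X_f}\psi\,ds\bigr)$ and the Agmon decay at both ends.
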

\begin{proof}
We may as well set $K=f^{-1}[I,S].$
Motivated by \cite{farbershustin00polyform}, consider $(Cone^*,d_C)$, where $Cone^j=\Omega^j(M)\oplus \Omega^{j-1}(U_c),$
\[d_C(\phi,\phi')=(d\phi,(-1)^j(-d|_{U_c}\phi'+\phi|_{U_c})).\]
Then $(Cone^*,d_c)$ and $(\Omega^*(M,U_c),d)$ are quasi-isomorphic.
\def\bPhi{{\bar\Phi}}
\def\bnf{-\frac{\nabla f}{|\nabla f|}}
\def\L{\mathcal{L}}

Set $U_c'=\{p\in M:f(p)>c\}$, $U=U_c\cup U_c'$. Let $\bar\Phi^t$ be the flow in $U$ generated by $X_f=-\frac{\nabla f}{b^2T^2|\nabla f|^2}$ on $U_c$, and $X_f=\frac{\nabla f}{b^2T^2|\nabla f|^2}$ on $U_c'$.

Define a map $\L:F_{Tf}^{[0,1],j}\mapsto Cone^j,$
\begin{equation*}
\o\mapsto (\exp(Tf)\o,(-1)^j\int_0^\infty(\bPhi^s)^*(\exp(Tf)\iota_{X_f}\o)ds)
\end{equation*}
By Theorem \ref{est}, and similar argument in Lemma \ref{rho}, we can see that $|(\bPhi^s)^*\iota_{X_f}\o|\leq C\exp(-\int_0^saTdt)\leq C\exp(-aTs).$
Hence, $\L$ is well defined.

$\L$ is a chain map, since

 \begin{align*}
\L(d_{Tf}\o)&=(\exp(Tf) d_{Tf}\o,(-1)^{j+1}\int_0^\infty(\bPhi^s)^*(\exp(Tf)\iota_{X_f} d_{Tf}\o)ds)\\
&=(\exp(Tf) d_{Tf}\o,(-1)^{j+1}\int_0^\infty(\bPhi^s)^*(\exp(Tf)\iota_{X_f} d_{Tf}\o)ds)\\
&=(d(\exp(Tf)\o),(-1)^{j+1}\int_0^\infty(\bPhi^s)^*(\iota_{X_f}d(\exp(Tf)\o)ds))\\
&=(d(\exp(Tf)\o),(-1)^{j+1}\int_0^\infty(\bPhi^s)^*(L_{X_f}(\exp(Tf)\o)+d\iota_{X_f}(\exp(Tf)\o ds)\\
&=(d(\exp(Tf)\o),(-1)^{j+1}\int_0^\infty\frac{d}{ds}(\bPhi^s)^*(\exp(Tf)\o)+d(\exp(Tf)\iota_{X_f}\o) ds)\\
&=(d(\exp(Tf)\o),(-1)^{j+1}(-\exp(Tf)\o+d\int_0^\infty (\bPhi^s)^*(\exp(Tf)\iota_{X_f}\o) ds)\\
&=d_C\L(\o).
\end{align*}

 Hence $\L$ induces a homomorphism (still denote it by $\L$) between $H^*(\Omega_{(2)}^{\bullet}(M),d_{Tf})$ and $H^*(Cone^\bullet,d_C).$ The proof of the fact that $\L$ is a bijection is tedious, which will be given in Subsection \ref{bijec}.
\end{proof}
\section{An application of Theorem \ref{wts}}  \label{atmwb}
\def\M{\tilde{M}}
Let $(M,g)$ be an oriented, compact Riemannian manifold with boundary $\pat M,$ and near the boundary. Let $\M:=M\cup T,$ where $T\cong \pat M\times [1,\infty)$. Then we can extend the metric $g$ to $\M$, s.t. near the infinity, the metric $g$ on $\M$ is of product type, i.e. $g_{\partial M}+dr^2$. It's easy to see that $(\M,g)$ has bounded geometry.
\def\f{\tilde{f}}
We have the following technical lemma
\begin{lem}
Given a transversal Morse function $f$, a partition of boundaries $N^+=N^+_1\sqcup N^+_2,$ $N^-=N^-_1\sqcup N^-_2.$ We are able to extend $f$ to a function $\f$ on $\M,$ s.t.
\begin{enumerate}
\item $|\f|(x)\to\infty$ as $x\to\infty$;
\item $\f<0$ on $(N_1^+\sqcup N_2^-)\times (a,\infty];$
\item $\f$ has critical points $\Crit^{*}(\f)=Cirt^{\circ,*}(f)\cup Cirt^{+,*}_{N^+_1}(f)\cup Cirt^{-,*}_{N^-_1}(f)$;
\item $(\M,g,\f)$ is well tame.
\end{enumerate}
\end{lem}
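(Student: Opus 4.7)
The extension will be defined componentwise on each cylindrical end $N_i \times [1,\infty) \subset T$. To each boundary component $N_i$ I assign a sign $s_i = -1$ if $N_i \subset N_1^+\sqcup N_2^-$ and $s_i = +1$ otherwise; this sign prescribes the desired end behavior $\tilde f \to s_i\cdot\infty$, which immediately yields condition (2). On $N_i \subset N_1^\pm$ the natural sign of $\partial_r f|_{r=1}$ already agrees with $s_i$, so no ``flip'' of the radial direction is required and I can insert a controlled Morse-type extremum in $r$ to produce new critical points of $\tilde f$ corresponding to $\mathrm{Crit}(f|_{N_i})$. On $N_i \subset N_2^\pm$ the natural radial direction is opposite to $s_i$, so $\partial_r\tilde f$ must change sign somewhere on the cylinder; the issue there is to execute the flip without creating spurious critical points.

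For $N_i \subset N_1^\pm$, on $N_i \times[R_0,\infty)$ I use the ansatz $\tilde f(x,r) = f|_{N_i}(x) + h_i(r)$, where $h_i$ is smooth, affine of slope $s_i K$ at infinity (for some large $K$), and has a single Morse critical point at some $r_\ast$. Then $\nabla \tilde f = \nabla f|_{N_i}(x) + h_i'(r)\,\partial_r$, so the critical set of $\tilde f$ on this end is exactly $\{(x_0,r_\ast) : x_0 \in \mathrm{Crit}(f|_{N_i})\}$, in bijection with $\mathrm{Crit}^{+,*}_{N_1^+}(f)$ resp.\ $\mathrm{Crit}^{-,*}_{N_1^-}(f)$. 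For $N_i \subset N_2^\pm$, I use the product-type ansatz
\[
\tilde f(x,r) = \alpha_i(r)\bigl(f|_{N_i}(x) - c_i\bigr) + c_i + \beta_i(r), \qquad r \geq R_0,
\]
where $c_i$ is chosen outside the range of $f|_{N_i}$, $\alpha_i$ is bounded and nowhere zero with $\alpha_i$ constant at infinity, and $\beta_i$ is affine at infinity of slope $s_i K$. A direct computation gives $\nabla_x \tilde f = \alpha_i(r)\,\nabla f|_{N_i}(x)$, vanishing only at $\mathrm{Crit}(f|_{N_i})$; at such points $\partial_r \tilde f = \alpha_i'(r)(f|_{N_i}(x_0) - c_i) + \beta_i'(r)$, which is arranged to be nonzero by choosing $\beta_i'(r)/\alpha_i'(r)$ to avoid the finite set $\{c_i - f|_{N_i}(x_0) : x_0 \in \mathrm{Crit}(f|_{N_i})\}$. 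Finally, on each transition region $N_i \times [1, R_0]$ I use a cutoff function to interpolate smoothly between $f$ at $r = 1$ and the above model at $r = R_0$, and apply a small generic perturbation if needed to keep the zero loci of $\nabla_x\tilde f$ and $\partial_r\tilde f$ disjoint in the interpolation region.

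Conditions (1) and (3) are then built into the construction, and condition (2) follows since $\tilde f \to -\infty$ on each $(N_1^+\sqcup N_2^-)$-cylinder. For condition (4), at infinity on each cylinder $\nabla\tilde f$ has a nonzero component along $\partial_r$ of magnitude $|s_i K|$, so $|\nabla\tilde f| \geq |s_i K| > 0$ uniformly, while $|\nabla^2\tilde f|$ is uniformly bounded since the $N_1^\pm$ ansatz has $h_i'' \equiv 0$ at infinity and the $N_2^\pm$ ansatz has $\alpha_i, \beta_i$ with bounded second derivatives. Hence $c_{\tilde f} < \infty$ and $\epsilon_{\tilde f} > 0$, giving well tameness. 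The main obstacle is the $N_2^\pm$ flip, and the key trick is the choice of $c_i$ outside the range of $f|_{N_i}$: this displaces the candidate critical locus $\nabla_x\tilde f = 0$ onto the discrete set $\mathrm{Crit}(f|_{N_i})$, where the product structure then forces $\partial_r\tilde f \neq 0$ regardless of the sign of $\alpha_i'$, allowing the radial direction of $\tilde f$ to reverse without creating any critical points.
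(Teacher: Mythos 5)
Your high-level decomposition matches the paper's: extend component-by-component, insert new critical points on $N_1^\pm$ by creating a radial extremum, and extend without new critical points on $N_2^\pm$, with the sign of the end behavior dictated by condition (2). The index bookkeeping and the well-tameness check at infinity are also correct. However, there are two genuine problems with the way you handle the transition region, which is the crux of the matter.

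First, your assertion that on $N_1^\pm$ ``no `flip' of the radial direction is required'' is inconsistent with inserting a single Morse critical point of $h_i$. A Morse critical point forces $h_i'$ to change sign at $r_\ast$. Since $h_i'(\infty)=s_iK$ has the same sign as $\partial_r f|_{r=1}$ (that is what ``agrees with $s_i$'' means), the value $h_i'(R_0)$ on the near side of $r_\ast$ has the opposite sign. Hence on $[1,R_0]$ the radial derivative of $\tilde f$ must change sign -- a hidden flip -- exactly as on $N_2^\pm$, and then flip back at $r_\ast$. Second, your fallback -- ``apply a small generic perturbation\ldots to keep the zero loci of $\nabla_x\tilde f$ and $\partial_r\tilde f$ disjoint'' -- does not work. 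In the $n$-dimensional slab $N_i\times[1,R_0]$, $\{\nabla_x\tilde f=0\}$ is generically a $1$-dimensional set and $\{\partial_r\tilde f=0\}$ is generically an $(n-1)$-dimensional hypersurface; a generic intersection is therefore $0$-dimensional and nonempty, so disjointness is a \emph{non}-generic condition that a small perturbation cannot produce. Worse, there is a topological obstruction: the component of $\{\nabla_x\tilde f=0\}$ meeting $(x_0,1)$, $x_0\in\mathrm{Crit}(f|_{N_i})$, also meets $(x_0,R_0)$ (since your $N_1^\pm$ ansatz has $\nabla_x\tilde f=\nabla f|_{N_i}$ at both ends), and $\partial_r\tilde f$ has opposite signs at these two points by the flip above; by the intermediate value theorem the arc must cross $\{\partial_r\tilde f=0\}$, producing a spurious critical point. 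The same issue appears in your $N_2^\pm$ transition, where $\alpha_i(R_0)\nabla f|_{N_i}$ is a scalar multiple of $\nabla f|_{N_i}(\cdot,1)$ and vanishes at the same points.

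The paper avoids this by never running an uncontrolled interpolation between two incompatible $x$-dependences. It factors $f(x',r)=f(x',1)+\frac{\partial f}{\partial r}(x',1)\,\theta(x',r)$ with $\frac{\partial f}{\partial r}(x',1)\neq 0$ by transversality, and replaces only the scalar profile $\theta$ by a cutoff interpolation with a polynomial ($(r-1)^2$ on $N_1^\pm$, $\pm\frac12(r-1)$ on $N_2^\pm$). The nonvanishing prefactor means $\partial_r\tilde f=0$ iff the interpolated profile has vanishing $r$-derivative, and $\nabla_{x'}\tilde f$ at $r=1$ reduces to $\nabla_{x'}f(x',1)$; the conditions $\theta<\min\{-(r-1)^2,\tfrac12(r-1)\}$, $\partial_r\theta>0$, and $\eta'>0$ are precisely what is needed to control the sign of the interpolated profile's derivative. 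Your additive ansatz $f|_{N_i}(x)+h_i(r)$ discards the $r$-dependence of the tangential part of $f$ entirely, so the interpolation on $[1,R_0]$ is not a small correction and cannot be waved away. To repair your argument you would need either to adopt the paper's factored form, or to write the transition explicitly and prove that its $r$-derivative keeps a definite sign wherever $\nabla_x\tilde f$ can vanish -- the vague appeal to genericity does not do this.
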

\begin{proof}
Use notation $x=(x',r),x'\in\pat M,r\in(0,1]$ to denote $x\in\pat M\times (0,1].$
Since $f$ is a transversal Morse function, there exists $s_0<1$, s.t. $|\frac{\pat f}{\pat r}(x',r)|\neq0$ on $\pat M\times(s_0,1].$

Hence, by considering the Taylor expansion of $f(x',r)$ with respect to $r,$ there is a smooth function $\theta$ on $\pat M\times(s_0,1],$ s.t.
\begin{enumerate}
\item $f(x',r)=f(x',1)+\frac{\pat f}{\pat r}(x',1)\theta(x',r)$;
\item $\theta(x',r)=(r-1)+o((r-1))$ near $\pat M\times\{1\};$
\item $\frac{\pat \theta}{\pat r}(x',r')=1+o(1).$
\end{enumerate}
 Assume that $s_0$ is close enough to $1$, s.t. $\theta(x',r)<\min\{-(r-1)^2,1/2(r-1)\},\frac{\pat \theta}{\pat r}(x',r')>0$ on $\pat M\times(s_0,1].$

Let $\eta_1$ be a smooth function on $(-\infty, \infty),$ s.t.
 \def\Z{\mathbb{Z}}
 \begin{enumerate}
\item $0<\eta<1$ on $(s_0,1)$, and $\eta\equiv0$ on $(-\infty,s_0)$, $\eta\equiv1$ on $(1,\infty)$;
\item $\eta'(r)>0,\forall r\in(s_0,1)$;
\end{enumerate}
Then $\forall x'\in N^+_1\cup N^-_1$, let \[\f(x',r)=f(x',r)=f(x',1)+\frac{\pat f}{\pat r}(x',1)((1-\eta(r))\theta(x',r)+\eta(r)(r-1)^2);\] $\forall x'\in N^+_2\cup N^-_2$, let \[\f(x',r)=f(x',r)=f(x',1)+\frac{\pat f}{\pat r}(x',1)((1-\eta(r))\theta(x',r)+\eta(r)1/2(r-1)).\]
It's easy to verify that $\f$ satisfy our conditions.
\end{proof}

Since the Thom-Smale complex $(C^*(W^u),\patt')$ of $\f$ induces a differential operator $\patt'$ on $\Crit^*(f)$, Theorem \ref{bdy} follows easily from Theorem \ref{wts}.

\section{The Agmon Estimate}\label{tae}
\def\L{\mathcal{L}}

In this section we carry out the main technical estimates of the paper.
\subsection{Proof of Lemma \ref{tech1}}
\begin{proof}
Our proof is adapted from Theorem 1.5 in \cite{agmon2014lectures}.


Let $L=\{p\in M:\rho_T(p)\leq 2\}.$
\def\ek{\eta_k}
Let $\eta_k\in C^\infty_c(\R)$ ($k$ large enough) be a smooth bump function such that
\begin{equation*}
\eta_k(t)=
\begin{cases}
0, \mbox{ If $|t|<1$ or $|t|>k+1$;}\\
1, \mbox{ If $|t|\in(2,k)$},
\end{cases}
\end{equation*}
and $|\eta_k'(t)|\leq 2,$ $\eta_k(t)\in[0,1],\forall t\in\R.$

\def\rjt{\rho_{T,j}}
\def\phikj{\varphi_{k,j}}
\def\ltj{\lambda_{T,j}}
Set $\rho_{T,j}=\min\{\rho_T,j\}$, and
\[\ltj=\begin{cases}
\lt, \mbox{ if } \rho_T<j,\\
0, \mbox{ otherwise}
\end{cases}.\]Clearly $|\nabla \rho_{T,j}|^2=\ltj$ a.e. and $\lt\geq \ltj.$

Now set $\varphi_{k,j}=(\eta_k\circ\rho_T)\exp(b\rho_{T,j})$.
 Then by assumption, we have
\[\int_{M}\nabla u\nabla(\phikj^2u)+\lt (u\phikj)^2\dvol\leq \int_Mw\phikj^2u\dvol. \]
Noting that $\nabla u\nabla(\phikj^2u)=|\nabla (\phikj u)|^2-|\nabla \phikj |^2u^2\geq -|\nabla \phikj |^2u^2, $ we have
\begin{equation}\label{eq7}
\int_{M-K}(\lt|u\phikj|^2-|u|^2|\nabla\phikj|^2)\dvol\leq \int_{M-K}wu\phikj^2\dvol.
\end{equation}
Since (we now omit the volume form $\dvol$ in what follows) \[\int_{M-K}wu\phikj^2 \leq \frac{1}{1-b^2}\int_{M-K}(\lt)^{-1}w^2\phikj^2+\frac{1-b^2}{4}\int_{M-K}\lt u^2\phikj^2,\]
and
\begin{align*}
|\nabla \phikj|^2&\leq \frac{1+b^2}{2} (\eta_k\circ\rho_T)^2|\nabla \rjt|^2\exp(2b\rjt) + \frac{1+b^2}{1-b^2}(\eta_k'\circ\rho_T)^2|\nabla \rho_T|^2\exp(2b\rjt) \\
&=\frac{1+b^2}{2}(\eta_k\circ\rho_T)^2\ltj \exp(2b\rjt)+\frac{1+b^2}{1-b^2}(\eta_k'\circ\rho_T)^2\lt\exp(2b\rjt),\end{align*}
by (\ref{eq7}), we have
\def\ek{\eta_k\circ\rho_T}
\def\ekp{\eta_k'\circ\rho_T}
\begin{align}
\begin{split}
\label{sim}
&\frac{3+b^2}{4}\int_{M-K}\lt (\eta_k\circ\rho_T)^2u^2\exp(2b\rho_{T,j})
-\frac{1+b^2}{2}\int_{M-K}\ltj (\eta_k\circ\rho_T)^2u^2\exp(2b\rho_{T,j}) \\
&\leq \frac{1}{1-b^2}\int_{M-K}w^2(\ek)^2\lt^{-1}\exp(2b\rho_{T,j}) \\
&+\frac{1+b^2}{1-b^2}\int_{M-K}u^2(\ekp)^2\lt\exp(2b\rho_{T,j}) \\
&\leq \frac{1}{1-b^2}\int_{M-K}w^2\lt^{-1}\exp(2a\rho_{T}) \\
&+2\frac{1+b^2}{1-b^2}\int_{L-K}u^2\lt\exp(2b\rho_{T}) +2\frac{1+b^2}{1-b^2}\int_{\tb_{k+1}-\tb_{k}}u^2\lt\exp(2bj)
\end{split}\end{align}
Letting $k\to\infty,$ by the monotone convergence theorem and the fact that $\int_{M}\lt u^2<\infty$, we have
\begin{align*}
&\frac{3+b^2}{4} \int_{M-L}\lt u^2\exp(2b\rho_{T,j})-\frac{1+b^2}{2}\int_{M-L}\ltj u^2\exp(2b\rho_{T,j})\\
&\leq\frac{1}{1-b^2}\int_{M-K}w^2\lt^{-1}\exp(2b\rho_{T})+2\frac{1+b^2}{1-b^2}\int_{L-K}u^2\lt\exp(2b\rho_{T}).
\end{align*}
Now let $j\to\infty.$ By the monotone convergence theorem again, we arrive at (\ref{techeq}), as desired.
\end{proof}
\subsection{Proof of Lemma \ref{moser}}
\begin{proof}
Our proof is a standard argument of Moser iteration.

Assume that $\rho_T(p)$ is big enough and $r$ is small enough, s.t. $B_{2r}(p)\cap K=\emptyset.$ Then on $B_{2r}(p)$, we have
\begin{equation}\label{delta}cu\geq \Box_{Tf}u\geq \Delta u\end{equation}
weakly.

Set $r_1=2r, r_{k+1}=r_k-(1/2)^kr, n_k=(n/(n-2))^{k-1}$

Let $\eta_k\in C^\infty_c(B_{2r})$ be bump functions s.t.
\[\eta_k=\begin{cases}
1 \mbox{ on $B_{r_{k+1}}$,}\\
0 \mbox{ on $B_{2r}-B_{r_{k}}$,}
\end{cases}\]
and $|\nabla \eta_k(q)|<\frac{2}{r_{k+1}-r_k},$ $\eta_k(q)\in [0,1],\forall q\in B_{2r}.$
\def\um{u_{m}}
\def\u{u_{\xi}}

Set $u_{m}=\min\{u,m\}$, and $\phi_1=\eta_1^2u_m\in H_0^1(B_{2r}).$ Notice that $\phi_1=0$ and $\nabla \phi_1=0$ in $\{u\geq m\}.$
Hence, by (\ref{delta}), we have
\begin{align*}
&\int_{B_{r_1}}c(u_m)^2dvol\geq\int_{B_{2r}}cu\phi_1dvol\geq\int_{B_{2r}}\nabla u\nabla \phi_1 dvol\\
&=\int_{B_{2r}}\eta_1^2|\nabla u_m|^2+2\eta_1\nabla \eta\nabla u_mu_mdvol\\
&\geq \int_{B_{2r}}\eta_1^2|\nabla u_m|^2-1/2\eta_1^2|\nabla u_m|^2-2|\nabla \eta u_m|^2dvol\\
&\geq \int_{B_{2r}}\eta_1^2|\nabla u_m|^2-1/2\eta_1^2|\nabla u_m|^2-2|\nabla \eta u_m|^2dvol\\
&\geq 1/2\int_{B_{r_2}}|\nabla u_m|^2dvol-4/(r_2-r_1)^2\int_{B_{r_1}}| u_m|^2dvol\\
\end{align*}
Hence, we have
\[\int_{B_{r_2}}|\nabla u_m|^2dvol\leq (2c+8/(r_1-r_2)^2)\int_{B_{r_2}}c(u_m)^2dvol\leq C(n)/(r_1-r_2)^2\int_{B_{r_2}}c(u_m)^2dvol.\]
By Sobolev inequality,
\[(\int_{B_{r_1}}| u_m|^{2n_2}dvol)^{1/n_2}\leq C(n)/(r_1-r_2)^2\int_{B_{r_2}}c(u_m)^2dvol.\]
That is
\[\|u_m\|_{L^{2n_2}(B_{r_2})}\leq (C(n)/(r_1-r_2))\|u_m\|_{L^{2n_1}(B_{r_1})}\]
Let $m\to\infty,$ we have
\[\|u\|_{L^{2n_2}(B_{r_2})}\leq (C(n)/(r_1-r_2))\|u\|_{L^{2n_1}(B_{r_1})}\]
Consider $\phi_k=\eta_k^2(\um^{2n_k-1})\in H_0^1(B_{2r}).$ By the same arguments as above, we have
\[\|u\|_{L^{2n_{k+1}}(B_{r_{k+1}})}\leq (C(n)/(r_k-r_{k+1}))^{1/(n_k)}\|u\|_{L^{2n_k}(B_{r_k})}.\]
As a consequence,
\begin{align*}&\|u\|_{L^\infty(B_{r})}=\lim_{k\to\infty}\|u\|_{L^{2n_k}(B_{r_k})}\\
&\leq C\Pi_{k=1}^{\infty}(C(n)/(r_k-r_{k+1}))^{1/(n_k)}\|u\|_{L^2(B_{2r})}\\
&=C(C(n)/r)^{(\sum_{k=1}^{\infty}{1/(n_k)})}2^{\sum_{k=1}^\infty k/n_k}\|u\|_{L^2(B_{2r})}\\
&\leq C/r^{n/2}\|u\|_{L^2(B_{2r})}.
\end{align*}
\end{proof}

We state two Lemmas that would be needed shortly,
\begin{lem}\label{moser1}
 Suppose $u,w\in L^2(M)$, s.t. $\Box_{Tf}u\leq w$ in weak sense (Here we assume $u\geq 0.$). For $r>0$ small enough, $p\notin L$, let $B_r(p)$ be the geodesic ball around $p$ with radius $r$ induced by $g$. Then there exist $C_2>0$, s.t.
\[\sup_{y\in B_r(p)}u(y)\leq \frac{C_2}{r^{n/2}}(\|u\|_{L^2(B_{2r}(p))}+\|w\|_{L^2(B_{2r}(p))}),\]
where $C_2$ is a constant that depends only on dimension $n.$
\end{lem}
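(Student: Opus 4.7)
The plan is to adapt the Moser iteration from the proof of Lemma \ref{moser}, with an additional shift trick to absorb the inhomogeneous forcing $w$.

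First I would reduce from $\Box_{Tf}$ to $\Delta$. Since $p\notin L$ and $r$ is small enough, the ball $B_{2r}(p)$ is disjoint from the compact set $K$ (indeed from $L\supset K$). By Proposition \ref{estihdg}, when $\Box_{Tf}$ is applied to a function the form commutator $[e^i\wedge,\iota_{e_j}]$ vanishes, so the operator reduces to $\Delta+T^2|\nabla f|^2$ on scalars. Since $u\geq 0$ and $T^2|\nabla f|^2\geq 0$, the hypothesis gives $\Delta u\leq \Box_{Tf}u\leq w$ weakly on $B_{2r}(p)$.

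Next I would introduce the shifted function $\tilde u:=u+\kappa$ with the constant $\kappa:=r^{-n/2}\|w\|_{L^2(B_{2r}(p))}$. Then $\tilde u\geq\kappa>0$ and
$$\Delta\tilde u=\Delta u\leq w\leq |w|\leq \frac{|w|}{\kappa}\,\tilde u,$$
so $\tilde u$ satisfies a homogeneous differential inequality $\Delta\tilde u\leq c\,\tilde u$ with a non-constant coefficient $c:=|w|/\kappa$ whose $L^2(B_{2r})$ norm is exactly $r^{n/2}$, i.e.\ is scale-invariantly controlled. With this setup I would run the Moser iteration of Lemma \ref{moser} verbatim on $\tilde u$, with the nested balls $B_{r_k}$, cutoffs $\eta_k$, truncations $\tilde u_m=\min(\tilde u,m)$, and test functions $\phi_k=\eta_k^2\tilde u_m^{2n_k-1}$. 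The only new contribution is $\int c\,\eta_k^2\tilde u_m^{2n_k}$, which I would estimate by H\"older's inequality using that $c\in L^{n/2+\epsilon}(B_{2r})$ (obtained from the $L^2$ bound by H\"older on the ball of finite volume), and absorb into the Sobolev-side term exactly as in the standard treatment of inhomogeneous De Giorgi--Nash--Moser theory. The iteration then closes with a geometric-series constant, giving
$$\sup_{B_r(p)}\tilde u\leq \frac{C_2}{r^{n/2}}\|\tilde u\|_{L^2(B_{2r}(p))}.$$

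Finally, using $u\leq \tilde u$ and $\|\tilde u\|_{L^2(B_{2r})}\leq \|u\|_{L^2(B_{2r})}+\kappa|B_{2r}|^{1/2}\leq C(\|u\|_{L^2(B_{2r})}+\|w\|_{L^2(B_{2r})})$ (with $|B_{2r}|$ controlled by bounded geometry), the claimed estimate follows. The main obstacle is the closure of the iteration for a merely $L^2$ forcing: the choice $\kappa=r^{-n/2}\|w\|_{L^2(B_{2r})}$ is dictated by scale invariance so that $\|c\|_{L^2(B_{2r})}$ is dimensionally trivial, but in dimension $n\geq 4$ one must upgrade from $L^2$ to the Moser threshold $L^{n/2+\epsilon}$ by a finite-volume H\"older step -- this is the only technical point where the present argument goes beyond the calculation already carried out for Lemma \ref{moser}.
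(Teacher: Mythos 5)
Your shift-by-a-constant trick ($\tilde u=u+\kappa$ with $\kappa$ calibrated to $\|w\|_{L^2}$) is exactly the mechanism behind Han--Lin, Theorem 4.1, which is the reference the paper itself points to, so you are methodologically aligned with what the authors intend. However, there is a genuine gap in the step you flag at the end. You need $c=|w|/\kappa$ to lie in $L^{n/2+\epsilon}(B_{2r})$ for the term $\int c\,\eta_k^2\tilde u_m^{2n_k}$ to be absorbed by the Sobolev inequality, and you propose to obtain this from $c\in L^2$ ``by a finite-volume H\"older step.'' On a finite measure set H\"older only \emph{lowers} the integrability exponent, i.e. $L^p\subset L^q$ for $p\geq q$; it cannot raise $L^2$ to $L^{n/2+\epsilon}$ when $n/2+\epsilon>2$, i.e. for $n\geq 4$. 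So the argument only closes for $n\leq 3$. This is not a cosmetic defect: for $\Delta u\leq w$ with merely $w\in L^2$, local boundedness is genuinely false when $n\geq 5$ (take $w=|x|^{-\alpha}$ and its Newtonian potential $u\sim|x|^{2-\alpha}$ with $2<\alpha<n/2$: both are $L^2$, $u\geq 0$, but $u$ is unbounded). Han--Lin's theorem correspondingly requires the forcing to lie in $L^{q/2}$ with $q>n$, i.e. strictly above $L^{n/2}$, a hypothesis the lemma's statement does not supply; your write-up correctly senses this but does not resolve it. One would need either to assume $w\in L^q_{\mathrm{loc}}$ for some $q>n/2$ (which is likely what is implicitly used downstream, since the $w$ appearing in Lemma~\ref{epde} is smooth), or to exploit the positive zeroth-order term of $\Box_{Tf}$ rather than discarding it.

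A smaller point: the commutator $[e^i\wedge,\iota_{e_j}]$ does \emph{not} vanish on $0$-forms (it equals $-\delta_{ij}$), so $\Box_{Tf}$ on scalars is $\Delta+T^2|\nabla f|^2+T\,\mathrm{tr}\,\nabla^2 f$, not $\Delta+T^2|\nabla f|^2$. Your conclusion $\Delta u\leq w$ on $B_{2r}(p)$ is nevertheless correct, but for a different reason: outside $K$ one has $|\nabla^2 f|\leq\delta_2|\nabla f|^2$, so for $T$ large the full zeroth-order coefficient $T^2|\nabla f|^2+T\,\mathrm{tr}\,\nabla^2 f$ is nonnegative, and since $u\geq 0$ it can be dropped.
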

\begin{proof}
The proof is actually similar to the proof Lemma \ref{moser}, but a little more complicated.
See Theorem 4.1 in \cite{han2011elliptic} for a reference.
\end{proof}
By the same argument as the proof of Theorem \ref{est}, we have

\begin{lem}\label{epde}

Let $(M,g,f)$ be well tame, $w\in L^2(M)$, s.t.
\[\int_{M}\lt^{-1}|w|^2\exp(a''\rho_T)dvol<\infty\]
for some $a''\in(0,b).$
If $\phi\in L^2(M)$ is a weak solution of $\Box_{Tf}\phi\leq w,$
 then
\[|\phi(p)|\leq C\exp(-a''\rho_T(p)).\]

\end{lem}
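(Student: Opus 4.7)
The plan is to adapt the three-step machinery used to prove Theorem \ref{est} (Kato$\to$weighted $L^2$ $\to$ Moser upgrade), with the eigenvalue term $\mu(T)\phi$ replaced by the source $w$. Concretely, setting $u=|\phi|$ and combining the Bochner--Weitzenb\"ock formula with Kato's inequality applied to Proposition \ref{estihdg}, I obtain, in the weak sense,
\[
(\Delta+T^2|\nabla f|^2)u\;\leq\;|w|+(C_R+T|\nabla^2 f|)u,
\]
where $C_R$ is a curvature bound. Since $(M,g,f)$ is well tame, for $T$ large the zero-order term $T|\nabla^2 f|$ is dominated by $T^2|\nabla f|^2$ outside a compact set, so I can rewrite this as $(\Delta+\lambda_T)u\leq |w|+Gu$ with $G$ of order $\epsilon\lambda_T+\textrm{(compactly supported)}$.

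Next, I apply Lemma \ref{tech1} with the right-hand side $|w|+Gu$ and weight $\exp(2a''\rho_T)$. The $|w|$-piece is handled directly by the hypothesis $\int_M\lambda_T^{-1}|w|^2\exp(a''\rho_T)\,\dvol<\infty$ (note that $a''<b$, so the factor $\exp(2a''\rho_T)$ in Lemma \ref{tech1}'s conclusion is dominated by the weaker exponential in the hypothesis after minor adjustment of $b$; more precisely, one chooses any $b\in(a'',1)$ and applies the lemma with that $b$). The $Gu$-piece is absorbed into the left-hand side exactly as in the proof of Corollary \ref{wL2e}: the bulk contribution is swallowed by $\lambda_T u^2$ thanks to the smallness of $G/\lambda_T$ for large $T$, while the compactly supported remainder is controlled by $\|u\|_{L^2}<\infty$. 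This yields
\[
\int_M |u|^2\exp(2a''\rho_T)\,\dvol\;\leq\; C\!\left(\int_M\lambda_T^{-1}|w|^2\exp(2a''\rho_T)\,\dvol+\|u\|_{L^2}^2\right).
\]

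Finally, I upgrade the weighted $L^2$ estimate to a pointwise estimate by a local Moser iteration. For $p$ with $\rho_T(p)$ large, I apply Lemma \ref{moser1} (the source-term version of Moser iteration) on a $\tilde{g}_T$-ball $\tilde{B}_2(p)$, whose $g$-radius scales like $1/(T|\nabla f|)$; this produces
\[
|u(p)|^2\exp(2a''\rho_T(p))\;\lesssim\; (T|\nabla f(q_0)|)^{n}\bigl(I(u)+I_w\bigr),
\]
with $q_0$ a suitable point in $\tilde{B}_2(p)$ and $I_w$ the weighted integral of $|w|^2\lambda_T^{-1}$. The polynomial prefactor $|\nabla f(q_0)|^{n}$ is absorbed into a strictly weaker exponential $\exp(\varepsilon\rho_T(p))$ by Lemma \ref{remm}, at the cost of slightly shrinking the decay rate; choosing $\varepsilon$ small and $T$ correspondingly large gives $|\phi(p)|\leq C\exp(-a''\rho_T(p))$, as required. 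For $p$ in the compact region $\{\rho_T\leq r_0\}$, the estimate follows from a trivial Moser step as in the final remark of Section \ref{sec}.

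The main obstacle I anticipate is bookkeeping the rate $a''$: since Lemma \ref{tech1} is phrased with the parameter $b\in(0,1)$, while the polynomial-to-exponential trade-off in Lemma \ref{remm} also shaves off a small amount of decay, one must carefully choose an auxiliary $b'\in(a'',b)$, run Lemma \ref{tech1} with $b'$, and verify that the resulting exponent remains at least $a''$ after the Moser step. This is the same tightrope walked in Theorem \ref{est}, so the argument goes through; no genuinely new ingredient is needed beyond verifying the hypothesis $\int\lambda_T^{-1}|w|^2\exp(2b'\rho_T)<\infty$, which holds after adjusting $b'$ down toward $a''$.
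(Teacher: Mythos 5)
Your three-step template (Kato/Bochner to get a scalar differential inequality, Lemma~\ref{tech1} for a weighted $L^2$ bound, Lemma~\ref{moser1} for the pointwise upgrade) is exactly what the paper intends — its ``proof'' of this lemma is literally the single sentence ``by the same argument as the proof of Theorem~\ref{est},'' and you have correctly identified that argument, including using the source-term Moser lemma.

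However, the exponent bookkeeping in your last two paragraphs does not close. Lemma~\ref{tech1} run with parameter $b'$ requires finiteness of $\int \lambda_T^{-1}|w|^2\exp(2b'\rho_T)$ and returns $\int |u|^2\exp(2b'\rho_T)<\infty$; the Moser step then yields pointwise decay $\exp(-\tilde a\,\rho_T)$ only for $\tilde a<b'$. The stated hypothesis controls $\int \lambda_T^{-1}|w|^2\exp(a''\rho_T)$, which forces $2b'\le a''$, i.e.\ $b'\le a''/2$, and hence a pointwise rate $\tilde a<a''/2$ — not $a''$ as claimed. Your parenthetical ``the factor $\exp(2a''\rho_T)$ \dots is dominated by the weaker exponential in the hypothesis'' is backwards ($\exp(2a''\rho_T)\ge \exp(a''\rho_T)$, so the hypothesis is the weaker bound), and ``choose any $b\in(a'',1)$'' would require an even stronger weighted integrability of $w$ than is assumed. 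You should either (i) run the argument with $b'=a''/2-\varepsilon$ and accept the conclusion $|\phi(p)|\le C\exp\!\bigl(-(a''/2-\varepsilon)\rho_T(p)\bigr)$, which is what the machinery actually gives and which still suffices for the paper's application in Subsection~\ref{Independent}, or (ii) note explicitly that the lemma's hypothesis should read $\exp(2a''\rho_T)$ (likely a typo), in which case your proof does return the stated rate $a''$. As written, the claim ``the resulting exponent remains at least $a''$'' is a genuine gap.
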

\subsection{On the Thom-Smale complex}\label{ThomSmaleComplex}

First, let's recall the situation of the compact case. The following is a restatement of Proposition 6 in \cite{laudenbach1992thom}.
\begin{prop}\label{laudenbach}
Let $(N,g)$ be a compact Riemannian manifold, $f$ be a Morse function. Assume that $(N,g,f)$ satisfies Thom-Smale transversality condition. Then, for any critical point $x\in \Crit(f)$ with Morse index $n_f(x)$, any $\phi\in \O^{n_f(x)-1}(M)$, one has the following so called Stokes Formula
\[\int_{W^u(x)}d\phi=\sum_{y\in \Crit(f), n_f(y)=n_f(x)-1}m(x,y)\int_{W^u(y)}\phi.\]
\end{prop}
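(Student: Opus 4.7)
The strategy is to apply the classical Stokes theorem to a suitable truncation of $W^u(x)$ and then pass to the limit. First I would use the compactness of $N$ and the Smale transversality condition to conclude that $\overline{W^u(x)}$ is compact with frontier $\overline{W^u(x)}\setminus W^u(x) \subset \bigcup_{n_f(y)<n_f(x)} W^u(y)$, and that the codimension-one strata of this frontier are exactly the $W^u(y)$ with $n_f(y)=n_f(x)-1$.

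Next, for each small $\epsilon>0$, I would remove from $W^u(x)$ a tubular $\epsilon$-neighborhood $T_\epsilon(y)$ of every lower critical point $y$, obtaining an open set $W^u_\epsilon(x)$ whose closure is (for generic $\epsilon$) a compact manifold with boundary. Ordinary Stokes then yields
\[
\int_{\overline{W^u_\epsilon(x)}} d\phi = \int_{\partial \overline{W^u_\epsilon(x)}} \phi,
\]
and the left-hand side converges to $\int_{W^u(x)} d\phi$ as $\epsilon\to 0$ by the integrability of $d\phi$ over $\overline{W^u(x)}$.

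The heart of the argument is the analysis of the boundary contribution from each $T_\epsilon(y)$. By transversality, $W^u(x)\cap W^s(y)$ has dimension $n_f(x)-n_f(y)$; for $n_f(y)<n_f(x)-1$ this forces the intersection of $W^u(x)$ with the $\epsilon$-link of $y$ to have dimension strictly less than $n_f(x)-1$, so the corresponding boundary integral vanishes in the limit (since $\phi$ is bounded). For $n_f(y)=n_f(x)-1$, the intersection $W^u(x)\cap W^s(y)$ is a finite union of flow lines from $x$ to $y$; using Morse coordinates together with the Palis $\lambda$-lemma, I would show that near $y$ the manifold $W^u(x)$ asymptotically fibers over $W^u(y)$ with fiber an oriented small disk in the $W^s(y)$-direction, one sheet per flow line. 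The integer $m(x,y)$ is then the signed count of these sheets, and a direct local computation shows that the boundary integral over $W^u(x)\cap \partial T_\epsilon(y)$ converges as $\epsilon\to 0$ to $m(x,y)\int_{W^u(y)}\phi$.

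The main obstacle is the precise local structure of $\overline{W^u(x)}$ near a boundary critical point $y$: one must show that, after truncation, the pieces of $W^u(x)$ entering $T_\epsilon(y)$ are, modulo lower-order terms, graphs over small neighborhoods in $W^u(y)$, so that the boundary integral is genuinely approximated by $m(x,y)\int_{W^u(y)}\phi$. This requires a careful quantitative use of the $\lambda$-lemma together with Smale transversality. Since the proposition is stated as a restatement of a result of Laudenbach, at this stage I would cite \cite{laudenbach1992thom} for the remaining technical details.
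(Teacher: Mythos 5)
The paper does not actually prove this proposition; it simply states that it is a restatement of Proposition~6 in Laudenbach's paper and cites that reference. Your sketch correctly identifies the main ingredients of Laudenbach's argument --- the compactness of $\overline{W^u(x)}$ with frontier contained in lower-index unstable manifolds, the dimension count that kills contributions from strata of codimension $\geq 2$, and the $\lambda$-lemma analysis showing that near a codimension-one frontier stratum $W^u(y)$ the manifold $W^u(x)$ approaches in $|m(x,y)|$ signed sheets. You then defer to the reference for the quantitative details, which matches what the paper does.

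One point in your truncation step deserves a caveat. Removing $\epsilon$-balls around the lower critical points $y$ alone does \emph{not} make $\overline{W^u_\epsilon(x)}$ a manifold with boundary: a codimension-two frontier stratum $W^u(y)$ with $n_f(y)=n_f(x)-2$ extends away from $y$, and at such interior points of $W^u(y)$ the closure $\overline{W^u(x)}$ has a conical (not boundary) singularity that survives your excision. To get an honest compact manifold with boundary one should either excise tubular neighborhoods of the entire lower-dimensional closures $\overline{W^u(y)}$ (and check the compatibility near critical points), or truncate by flowing a small sphere $S_x\subset W^u(x)$ backward for finite time and let the flow time go to infinity. Either fix leaves your subsequent limiting argument intact, and both are in Laudenbach, so the gap is one of formulation rather than of substance.
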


For our noncompact case with tame conditions and Thom-Smale transversality, we have similarly 
\begin{prop}\label{stoke}
For any critical point $x\in \Crit(f)$ with Morse index $n_f(x)$, any $\phi\in \O^{n_f(x)-1}_c(M)$, one has the following so called Stokes Formula
\[\int_{W^u(x)}d\phi=\sum_{y\in \Crit(f), n_f(y)=n_f(x)-1}m(x,y)\int_{W^u(y)}\phi.\]
\end{prop}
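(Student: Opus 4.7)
The plan is to reduce the statement to the compact case, Proposition \ref{laudenbach} of Laudenbach, by restricting the picture to a large compact sub-domain that contains $\spt(\phi)$. The compact support of $\phi$ is crucial: only the part of each $W^u(z)$ lying in a fixed compact neighborhood of $\spt(\phi)$ ever enters the integrals, so the noncompact behavior of $W^u(z)$ at infinity is invisible to the identity.

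First I would choose a compact smooth sub-domain $N \subset M$ with the following properties: (i) $\spt(\phi) \subset N^\circ$; (ii) $K \subset N^\circ$, where $K$ is the compact set from Remark \ref{fgoestoinfinity}; (iii) $\partial N$ is transverse to every unstable manifold $W^u(z)$, $z \in \Crit(f)$; (iv) at each point of $W^u(z) \cap \partial N$ the flow vector $-F\nabla f$ points strictly out of $N$. Existence of such $N$ follows from Remark \ref{fgoestoinfinity}: any integral curve of $\tilde{\Phi}^t$ that leaves $K$ is driven to infinity and never returns, so starting from any large compact domain containing $\spt(\phi) \cup K$ one can enlarge or slightly deform its boundary (a standard Sard/transversality argument) to secure (iii) and (iv).

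Next, inside $N$ (viewed as a compact manifold with boundary), $f|_N$ is a Morse function with all its critical points in $N^\circ$, and the rescaled gradient flow $\tilde{\Phi}^t$ exits $\partial N$ transversally. Hence $W^u(x) \cap N$ is a compact manifold with corners whose codimension-one boundary strata are exactly of two types: (a) broken gradient trajectories ending at critical points $y$ with $n_f(y) = n_f(x)-1$, contributing $\sum_y m(x,y)\, (W^u(y) \cap N)$ in the usual Thom--Smale sense, and (b) the transverse intersection $W^u(x) \cap \partial N$. Laudenbach's compactification argument in \cite{laudenbach1992thom} applies verbatim in this manifold-with-corners setting, and combined with the ordinary Stokes theorem yields
\[
\int_{W^u(x)\cap N} d\phi \;=\; \sum_{\substack{y\in\Crit(f)\\ n_f(y)=n_f(x)-1}} m(x,y)\int_{W^u(y)\cap N} \phi \;+\; \int_{W^u(x)\cap \partial N}\phi.
\]

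Finally, I would conclude by using the compact support of $\phi$. Since $\spt(\phi) \subset N^\circ$, the boundary term $\int_{W^u(x)\cap \partial N}\phi$ vanishes, and the integrals over $W^u(x)\cap N$ and $W^u(y)\cap N$ coincide with those over $W^u(x)$ and $W^u(y)$ respectively; substituting gives the claimed formula. The main obstacle is justifying the boundary identity on $N$, i.e., the manifold-with-corners extension of Laudenbach's compactification in the presence of the transverse exit boundary $\partial N$. No new analytic input is needed beyond the Morse--Smale transversality assumed globally on $M$ and the transverse exit condition (iv), so the argument of \cite{laudenbach1992thom} carries over with only notational changes.
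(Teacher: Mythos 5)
Your route is genuinely different from the paper's. You keep the noncompact picture and try to prove a manifold-with-corners Stokes formula on a carefully chosen compact domain $N$, controlling the extra boundary term by the compact support of $\phi$. The paper instead sidesteps boundary terms entirely: it picks a compact $N$ with $\spt(\phi)\cup\tilde B_D\subset N^\circ$ (where $D$ is a carefully calibrated Agmon radius), forms the \emph{double} $DN$ of $N$ (a closed manifold), extends $f$ to a Morse function $\bar f$ on $DN$ that agrees with $f$ on $\spt(\phi)\cup\tilde B_D$ (Observation \ref{transversal}), applies Laudenbach's Proposition \ref{laudenbach} directly on the closed manifold $DN$, and then kills the extraneous terms coming from the new critical points $z$ of $\bar f$ by showing (Claim 2) that $W^u_{\bar f}(z)\cap\spt(\phi)=\emptyset$, an estimate on $\bar f(z)$ that uses the choice of $D$ and Lemma \ref{rho}. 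The paper's approach therefore never needs any structure on $\partial N$; the price is the bookkeeping with the doubled Morse function.

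Two places in your sketch need substantially more than you acknowledge. First, the claim that Laudenbach's argument ``applies verbatim'' to $\overline{W^u(x)}\cap N$ viewed as a manifold with corners is not accurate: Proposition \ref{laudenbach} is a statement for closed manifolds, and extending the compactification-with-conical-singularities picture to include the corner strata $W^u(y)\cap\partial N$ (broken-trajectory boundary meeting the domain boundary), and to justify the integration-by-parts formula with both a Morse boundary term and a $\partial N$ boundary term, is the technical heart of your approach, not a notational matter. Second, your condition (iv) (that $-F\nabla f$ points strictly out of $N$ at every point of $W^u(z)\cap\partial N$) is not obtained by a ``standard Sard/transversality argument'' applied to an arbitrary enlarged compact domain: Remark \ref{fgoestoinfinity} only guarantees that trajectories leaving $K$ never re-enter $K$, not that they never re-enter a larger domain $N\supsetneq K$, so a trajectory could in principle cross a generic $\partial N$ several times. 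To secure (iv) one needs a flow-adapted choice of $N$, for instance a smoothed sublevel set of the Agmon distance $\rho_T$ (which increases along the flow outside $K$ by Lemma \ref{rho}), and even then one must handle the nonsmoothness of $\rho_T$. So while your route can very likely be carried out, both of the steps you flag as routine are exactly where the paper's doubling construction is doing the real work.
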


Before giving the proof of this proposition, we first draw a couple of consequneces.
\begin{cor}
Let $\patt:C_*(W^u)\mapsto C_{*-1}(W^u)$ be the map constructed in Section \ref{tsw}, then $\patt^2=0$.
\end{cor}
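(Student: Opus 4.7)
The plan is to deduce $\patt^2=0$ by applying the Stokes formula of Proposition \ref{stoke} twice and then pairing against a well-chosen family of test forms. Fix $x\in\Crit(f)$ with $n_f(x)=k+2$ and expand
\[
\patt^2 W^u(x)=\sum_{z\in\Crit(f),\,n_f(z)=k}\Big(\sum_{y\in\Crit(f),\,n_f(y)=k+1}m(x,y)\,m(y,z)\Big)\,W^u(z);
\]
denote the inner coefficient by $c_{x,z}$. The goal is to show $c_{x,z}=0$ for every such $z$.

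Next I would test against an arbitrary $\phi\in\Omega^k_c(M)$. Since $d\phi\in\Omega^{k+1}_c(M)$, Proposition \ref{stoke} applies first to $d\phi$ on $W^u(x)$ and then to $\phi$ on each $W^u(y)$, yielding
\[
0=\int_{W^u(x)}d(d\phi)=\sum_y m(x,y)\int_{W^u(y)}d\phi=\sum_{z,y}m(x,y)\,m(y,z)\int_{W^u(z)}\phi,
\]
so that $\sum_{z:\,n_f(z)=k}c_{x,z}\int_{W^u(z)}\phi=0$ for every such $\phi$. Note that compact support of $\phi$ is preserved under $d$, so both applications of Stokes are legitimate.

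The remaining step is to isolate each $c_{x,z_0}$ by exhibiting a test form that detects only $W^u(z_0)$ among the index-$k$ unstable manifolds. Here I would use Smale transversality together with the flat Morse coordinates of (\ref{flatMetric}): for any $z'\neq z_0$ with $n_f(z')=n_f(z_0)$, the intersection $W^u(z')\cap W^s(z_0)$ is flow-invariant of expected dimension zero, hence empty, so that $z_0\notin\overline{W^u(z')}$. Choosing a small coordinate neighborhood $V$ of $z_0$ disjoint from every such closure, one has $W^u(z_0)\cap V$ equal to the local $k$-plane $\{z_{k+1}=\cdots=z_n=0\}$, and a bump form $\chi(z)\,dz_1\wedge\cdots\wedge dz_k$ supported in $V$ with $\int_{W^u(z_0)}\phi\neq 0$ does the job; the displayed identity above then forces $c_{x,z_0}=0$. (If $k=0$, one simply takes a bump function equal to $1$ at $z_0$ and vanishing near the other index-$0$ critical points.)

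The main obstacle I anticipate is ruling out that some other $W^u(z')$ accumulates on $z_0$ from infinity, which a priori could occur in the noncompact setting. This, however, is controlled by the well-tame hypothesis and the description of unstable manifolds at infinity in Remark \ref{fgoestoinfinity}: the closure of $W^u(z')$ meets $\Crit(f)$ only at those critical points reachable by broken flow lines from $z'$, so the separation argument at $z_0$ reduces to the compact-case one and the test-form construction goes through unobstructed.
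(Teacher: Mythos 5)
Your argument is essentially the paper's: both reduce $\patt^2=0$ to two applications of the Stokes formula in Proposition \ref{stoke} followed by pairing against a compactly supported test form, the only difference being that you spell out the construction of the bump form isolating a single $W^u(z_0)$ (and the transversality/closure argument justifying it), whereas the paper simply asserts that such a $\phi\in\Omega^{n_f(x)-2}_c(M)$ exists whenever $\patt^2 W^u(x)\neq 0$.
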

\begin{proof}
Otherwise, $\patt^2 W^u(x)\neq0$. Then there exists $\phi\in \Omega_c^{n_f(x)-2}(M),$ s.t.
\[\int_{\patt^2W^u(x)}\phi\neq 0.\]
But by Proposition \ref{stoke},
\[\int_{\patt^2W^u(x)}\phi=\int_{W^u(x)}d^2\phi= 0\]
\end{proof}

\begin{cor}
Let $\o\in F^{[0,1],n_f(x)-1}_{Tf}$, one has
\[\int_{W^u(x)}\exp(Tf)d_{Tf}\o=\sum_{y\in \Crit(f), n_f(y)=n_f(x)-1}m(x,y)\int_{W^u(y)}\exp(Tf)\o.\]
\end{cor}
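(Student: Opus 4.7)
The plan is to reduce to the compactly supported Stokes formula (Proposition \ref{stoke}) via a cutoff, using the key identity
\[
e^{Tf}\, d_{Tf}\omega \;=\; d\!\left(e^{Tf}\omega\right),
\]
which follows from $d_{Tf} = e^{-Tf}\, d\, e^{Tf}$. Since $d_{Tf}$ commutes with $\Box_{Tf}$, the form $d_{Tf}\omega$ again lies in $F_{Tf}^{[0,1],*}$, so the same analysis used in Section \ref{tsw} to verify the well-definedness of $\mathcal{J}$ (Theorem \ref{est} together with Lemmas \ref{vol} and \ref{rho}) shows that $\int_{W^u(x)} e^{Tf}\, d_{Tf}\omega$ is absolutely convergent. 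Note also that $\omega$ is smooth by elliptic regularity for $\Box_{Tf}$.

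Fix a bump $\chi \in C_c^\infty(\R)$ with $\chi \equiv 1$ on $[-1/2, 1/2]$, and set $\chi_R(p) := \chi(\rho_T(p) - R)$, so that $\chi_R$ is a smooth cutoff supported in $\{\rho_T \leq R+1\}$. Then $\phi_R := \chi_R\, e^{Tf}\omega \in \Omega_c^{n_f(x)-1}(M)$, and Proposition \ref{stoke} combined with the Leibniz rule gives
\[
\int_{W^u(x)} d\chi_R \wedge e^{Tf}\omega \;+\; \int_{W^u(x)} \chi_R\, e^{Tf} d_{Tf}\omega \;=\; \sum_{y:\, n_f(y)=n_f(x)-1} m(x,y) \int_{W^u(y)} \chi_R\, e^{Tf}\omega.
\]
Letting $R \to \infty$, dominated convergence handles the second term on the left and the entire right-hand side (using the uniform $L^1$ bounds already established for the integrability of $\mathcal{J}$). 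The only nontrivial step is to show that the boundary term
\[
\mathcal{B}(R) \;:=\; \int_{W^u(x)} d\chi_R \wedge e^{Tf}\omega
\]
vanishes in the limit.

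The integrand of $\mathcal{B}(R)$ is supported on the annular region $W^u(x) \cap \{\rho_T \in [R, R+1]\}$. On this region I would combine four estimates: (i) Theorem \ref{est} gives $|\omega| \leq C T^{(n+2)/2} e^{-a \rho_T} \|\omega\|_{L^2}$; (ii) integrating $\tfrac{d}{dt}(Tf \circ \tilde{\Phi}^t) = -1/T$ outside $K$ and comparing with Lemma \ref{rho} yields $Tf = Tf(x) - \rho_T + O(1)$ along $W^u(x)$, hence $e^{Tf} \leq C(T)\, e^{-\rho_T}$ there; (iii) Lemma \ref{remm} bounds $|d\chi_R|_g \leq bT|\nabla f|\, |\chi'| \leq C T\, e^{c_f R/(2bT)}$; and (iv) Lemma \ref{vol} controls the $g$-volume of the flow parameterization of $W^u(x) \cap \{\rho_T \in [R, R+1]\}$ by $e^{n_f(x)\epsilon R}$ times a polynomial factor in $T$. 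Multiplying and integrating these bounds produces
\[
|\mathcal{B}(R)| \;\leq\; C(T)\, e^{Tf(x)}\, e^{-\beta R}, \qquad \beta \;=\; (1+a) \;-\; \frac{c_f}{2bT} \;-\; n_f(x)\,\epsilon,
\]
provided $\epsilon$ in (\ref{2tc}) is taken small and $T$ is large enough that $\beta > 0$. Hence $\mathcal{B}(R) \to 0$, and passing to the limit gives the corollary.

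The main obstacle is precisely this quantitative balancing: the exponential decay of $|e^{Tf}\omega|$ along $W^u(x)$ must dominate the combined exponential growth coming from $|d\chi_R|$ (via $|\nabla f|$ on the far-away shell, controlled by Lemma \ref{remm}) and from the flow Jacobian on $W^u(x)$ (Lemma \ref{vol}). Both unwanted growth rates can be made arbitrarily small, $O(1/T)$ and $O(\epsilon)$ respectively, by the well-tameness hypothesis and by our choice of $K$ at the start of Section \ref{tsw}, which is precisely why the conclusion requires $T \geq T_0$ large.
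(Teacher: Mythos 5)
Your proof is correct and follows essentially the same approach as the paper's. The paper's one-line proof asserts, citing Theorem \ref{est} and Lemma \ref{vol}, that for any $\epsilon>0$ one can find compactly supported $\phi$ approximating $\exp(Tf)\omega$ in $L^1(W^u(y))$ and $d\phi$ approximating $\exp(Tf)d_{Tf}\omega$ in $L^1(W^u(x))$; the natural such $\phi$ is precisely your $\chi_R\exp(Tf)\omega$, and the error between $d\phi$ and $\chi_R\exp(Tf)d_{Tf}\omega$ is exactly your boundary term $\mathcal{B}(R)=\int_{W^u(x)}d\chi_R\wedge e^{Tf}\omega$. You have merely made the cutoff explicit and spelled out the exponent bookkeeping (Theorem \ref{est}, Lemma \ref{remm}, Lemma \ref{vol}, and the flow-line relation $Tf\approx Tf(x)-\rho_T/b$) that shows the boundary term decays; this is a welcome elaboration rather than a different route.
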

\begin{proof}
By Theorem \ref{est} and Lemma \ref{vol}, for any $\epsilon>0$, there exist $\phi\in \O^{n_f-1}_c(M)$, s.t. for any $y\in \Crit(f)$ with $n_f(y)=n_f(x)-1$,
\[\int_{W^u(x)}|\exp(Tf)d_{Tf}\o-d\phi|<\epsilon,\int_{W^u(y)}|\exp(Tf) \o-\phi|<\epsilon.\]

Now our Corollary follows from Proposition \ref{stoke} trivially. 
\end{proof}
Hence, the map $\J$ introduced in Section \ref{tsw} is a chain map.

The proof of Proposition \ref{stoke} follows from the following observations:
\begin{ob}\label{transversal}
Let $(N, \partial N)$ be compact manifold with boundary. Moreover, assume that near the boundary $\partial N$, the manifold is of  product type $(0,1]\times \partial N.$ Suppose that $f$ is a Morse function on $N-[1/2,1]\times \partial N$. Then there exist a transversal Morse function $\bar{f}$ on $N$, s.t. $\bar{f}|_{N-[1/4,1]\times \partial N}=f, \tilde{f}_{[3/4,1]\times\partial N}=r.$ Here $r$ is the standard coordinate on $(0,1]$ factor.
\end{ob}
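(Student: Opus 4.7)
The plan is to extend $f$ to a transversal Morse function $\bar{f}$ on $N$ by smoothly interpolating between $f$ and the collar coordinate $r$, using a cutoff with sufficient steepness to avoid introducing new critical points in the transition region.

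First, choose a smooth bounded extension $f_0$ of $f$ to all of $N$: since $f$ is defined on the open set $\{r < 1/2\}$ together with the interior of $N$, one can use a partition of unity to produce a smooth $f_0$ agreeing with $f$ on $\{r \le 1/4\}$ and, say, $f_0 \equiv 0$ on $\{r \ge 1/2\}$. Next, fix a smooth monotone function $\eta:[0,1]\to[0,1]$ with $\eta \equiv 0$ on $[0,1/4]$, $\eta \equiv 1$ on $[3/4,1]$, and $\eta'>0$ on $(1/4,3/4)$. On the collar, set
\[
\bar{f}(x',r) := (1-\eta(r))\,f_0(x',r) + \eta(r)\,r,
\]
and $\bar{f} := f$ elsewhere on $N$. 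By the support properties of $\eta$, the two pointwise equalities in the conclusion hold.

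Next, verify the Morse and transversality conditions. Outside the interpolation region $[1/4,3/4]\times \partial N$, $\bar{f}$ equals either $f$ (Morse by hypothesis) or $r$ (no interior critical points and $\partial_r \bar{f} = 1$, hence transverse). Inside this region, a direct computation gives
\[
\partial_r \bar{f} = \eta(r) + \eta'(r)(r - f_0) + (1-\eta(r))\,\partial_r f_0.
\]
Because $f_0$ and $\partial_r f_0$ are uniformly bounded on the compact collar, and the factor $r - f_0$ can be made positive there by an a priori constant shift of $f_0$ on $\{r > 1/4\}$ (absorbed into the same partition-of-unity construction), the right-hand side is strictly positive once $\eta'$ is chosen sufficiently large on $(1/4, 3/4)$. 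Hence $\bar{f}$ has no critical points in the interpolation region; all critical points of $\bar{f}$ lie where $\bar{f} = f$, are nondegenerate, and $\partial_r \bar{f}$ is nowhere zero on the collar.

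The main obstacle is arranging the interpolation in the second step so that no spurious critical points appear; this is a routine calculus estimate once one observes that the steepness of $\eta$ and the $C^1$-size of $f_0$ on the collar may be chosen independently. For the Morse condition of $\bar{f}$ restricted to $\partial N$, which the literal formula $\bar{f}|_{\partial N}=1$ fails trivially, one perturbs by $\epsilon\,\chi(r)\,h(x')$ with $h$ a fixed Morse function on $\partial N$, $\chi$ supported just inside the collar, and $\epsilon>0$ small enough to preserve the nonvanishing of $\partial_r \bar{f}$ and the Morse structure of the interior critical points; such an adjustment is compatible with the use of this observation in the proof of Theorem~\ref{bdy}.
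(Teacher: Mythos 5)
Your overall strategy---interpolating between $f$ and the collar coordinate with a cutoff---is the right kind of construction, and the paper itself gives no detail beyond a pointer to Theorem~2.5 of Milnor's \emph{Lectures on the $h$-cobordism theorem}, which is a genericity/partition-of-unity argument in the same spirit. You also correctly flag that the statement as written is slightly inaccurate: if $\bar{f}\equiv r$ on $[3/4,1]\times\partial N$, then $\bar{f}|_{\partial N}$ is constant and cannot be Morse on $\partial N$, and indeed in the analogous construction of Section~6 the paper uses $(r-1)^2$ or $\tfrac12(r-1)$ near the boundary rather than $r$.

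However, your key positivity argument has a genuine gap. You compute
\[\partial_r \bar{f} = \eta(r) + \eta'(r)\bigl(r - f_0\bigr) + (1-\eta(r))\,\partial_r f_0,\]
and claim the right-hand side is positive ``once $\eta'$ is chosen sufficiently large on $(1/4,3/4)$.'' This cannot work: $\eta$ is a smooth cutoff that is constant on $[0,1/4]$ and $[3/4,1]$, so $\eta'(1/4)=\eta'(3/4)=0$, and $\int_{1/4}^{3/4}\eta' = 1$. You cannot make $\eta'$ uniformly large; near $r=1/4^+$ both $\eta$ and $\eta'$ are small, so $\partial_r\bar{f}\approx\partial_r f_0$, whose sign is whatever $\partial_r f$ happens to be at $r=1/4$ and is entirely outside your control. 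Worse, the proposed fix of pushing $f_0$ down by a constant ``on $\{r>1/4\}$'' forces $\partial_r f_0$ to be large and negative on the transition band, which makes the $(1-\eta)\partial_r f_0$ term even more negative precisely where $\eta$ and $\eta'$ are smallest. Making the transitions of $f_0$ and $\eta$ occur simultaneously on the same interval is what creates the problem; a standard remedy (which the sketch does not use) is to stagger them: first flatten $f_0$ to a constant on, say, $(1/4,1/2)$ where $\eta\equiv 0$, then let $\eta$ transition on $(1/2,3/4)$ where $\partial_r f_0\equiv 0$, so that $\partial_r\bar{f}=\eta' r + \eta>0$ there. Even then one must invoke a Sard/transversality perturbation in the band where $\bar{f}=f_0$ to restore the Morse condition, and one must accept that $\bar{f}$ may have critical points on the collar region $(0,1/4]\times\partial N$ inherited from $f$ itself (the hypothesis does not exclude them), so the transversality condition $\partial_r\bar{f}\neq 0$ can only hold on a smaller collar; this interpretive point should be stated explicitly rather than glossed over.
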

The proof is essentially the same with Theorem 2.5 in \cite{milnor2015lectures}.

\begin{ob}\label{flow}
Let $\tilde{B}_D(p_0)$ be the ball with radius $D$ introduced in Section \ref{tsw}. Then for any $y,z\in \Crit(f)$, $W^u(y)\cap W^s(z)\subset \tilde{B}_D^\circ.$ Moreover, if $p\notin \tilde{B}_D(p_0)$ lies in an unstable manifold, then the curve $\{\Phi^t(p):t\geq0\}\cap\tilde{B}_D=\emptyset.$ Here $\tilde{B}_D^\circ$ denotes the interior of $\tilde{B}_D=\tilde{B}_D(p_0),$ $\Phi^t$ is the flow generated by $-\nabla f.$
\end{ob}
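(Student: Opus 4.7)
My plan is to deduce both statements from the identification of $-\nabla f$-flow lines with $\gt$-geodesics established inside the proof of Lemma \ref{rho}, together with the identity $\tilde{d}_T(p,q)=bT|f(p)-f(q)|$ whenever $p,q$ lie on a common flow line (the equality used in the second form of the definition of $D$).

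For the first statement, if $p\in W^u(y)\cap W^s(z)$, then $p$ sits on a single trajectory of $-\nabla f$ joining $y$ at $t=-\infty$ to $z$ at $t=+\infty$. Applying Lemma \ref{rho} to the segments $y\to p$ and $p\to z$, each is a minimizing $\gt$-geodesic, so
\[\tilde{d}_T(y,p)=bT\bigl(f(y)-f(p)\bigr),\qquad \tilde{d}_T(p,z)=bT\bigl(f(p)-f(z)\bigr),\]
and their sum equals $\tilde{d}_T(y,z)=bT(f(y)-f(z))$. The triangle inequality then yields
\[\tilde{d}_T(p,p_0)\le \tilde{d}_T(p,y)+\tilde{d}_T(y,p_0)\le \tilde{d}_T(y,z)+\tilde{d}_T(y,p_0)\le D,\]
the last step by the very definition of $D$. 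If $p\ne z$ then $f(p)>f(z)$, so the middle inequality is strict and $p\in \tilde{B}_D^\circ$; the degenerate cases $p=y$ or $p=z$ follow by varying the free critical point in the $\sup$ defining $D$, using that $\Crit(f)$ contains more than one point whenever the intersection is nontrivial.

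For the second statement, fix $p\in W^u(x)$ with $p\notin \tilde{B}_D(p_0)$ and consider the forward orbit $\tilde\Phi^t(p)$, which exists for all $t\ge 0$ by Lemma \ref{complete}. Since $\tilde\Phi^t(p)\in W^u(x)$ for all $t$, the $\omega$-limit set of this orbit in $M$ is contained in $\Crit(f)$ (by monotonicity of $f$ and the tameness condition ensuring that outside $K$ the $f$-value drops at a fixed rate). If this $\omega$-limit set contained some critical point $z$, then $p\in W^s(z)$ as well, and Part (1) would give $p\in W^u(x)\cap W^s(z)\subset \tilde{B}_D^\circ$, contradicting $p\notin \tilde{B}_D$. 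Hence $\tilde\Phi^t(p)$ escapes every compact set, and in particular $\tilde{d}_T(\tilde\Phi^t(p),p_0)\to\infty$.

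The crux is ruling out intermediate returns: I must show $\tilde\Phi^{t_0}(p)\notin \tilde{B}_D$ for any $t_0\ge 0$. Suppose for contradiction $q:=\tilde\Phi^{t_0}(p)\in \tilde B_D$. Since $p$ and $q$ lie on the $\gt$-geodesic $W^u(x)\cap\{\text{trajectory through }x\}$, Lemma \ref{rho} gives the additive identity $\tilde{d}_T(x,q)=\tilde{d}_T(x,p)+\tilde{d}_T(p,q)=bT(f(x)-f(q))$. From $p\notin\tilde{B}_D$ and the triangle inequality through $x$, $\tilde{d}_T(x,p)>D-\tilde{d}_T(x,p_0)\ge \tilde{d}_T(x,y)$ for every critical $y$; inserting the equality $\tilde{d}_T(x,y)=bT|f(x)-f(y)|$ and taking $y=\arg\min_{\Crit(f)} f$ forces $f(p)<f_{\min,\,\Crit}$, hence $f(q)<f_{\min,\,\Crit}$ as well. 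Combining this with the bound $\tilde{d}_T(q,p_0)\ge bT|f(q)-f(p_0)|$ and the inequality $D/(bT)\ge |f(x)-f(p_0)|+(f_{\max,\,\Crit}-f_{\min,\,\Crit})$ coming from the definition of $D$, one produces $\tilde{d}_T(q,p_0)>D$, the desired contradiction. The most delicate point—and the one I expect to require the most care—is verifying this last chain of estimates without losing a factor of $2$, which is where one must exploit the judicious choice of $K$ (with $\tilde{B}_{D+1}\subset K^\circ$) so that the triangle inequalities are effectively tight for trajectories originating at critical points.
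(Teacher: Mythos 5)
The paper's own argument is a one\nobreakdash-liner: it invokes the identity that for $p,q$ on a common flow line the distance $\td(p,q)$ is proportional to $|f(p)-f(q)|$ (the $\gt$\nobreakdash-length of a $-\nabla f$ trajectory is exactly $bT$ times the $f$\nobreakdash-drop, and $bTf$ is $1$\nobreakdash-Lipschitz in $\gt$ so this length is also minimal), together with the monotonicity of $f$ along the flow. Your Part 1 lands in the same place but justifies the key identity incorrectly: you cite Lemma \ref{rho} to say the segments $y\to p$ and $p\to z$ are minimizing $\gt$\nobreakdash-geodesics, but Lemma \ref{rho} proves this only for trajectory segments lying entirely outside $K$, and here $y,z\in\Crit(f)\subset K^\circ$ so the flow line certainly enters $K$. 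The equality $\td(y,p)=bT(f(y)-f(p))$ still holds — the flow segment furnishes a path of $\gt$\nobreakdash-length $bT(f(y)-f(p))$ regardless of $K$, and the reverse inequality follows from the $1$\nobreakdash-Lipschitzness of $bTf$ — but that is a different (and more robust) argument than appealing to the geodesic claim in Lemma \ref{rho}.

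Part 2 has a genuine gap, which you yourself flag. The $\omega$\nobreakdash-limit\nobreakdash-set digression only shows eventual escape and does not rule out intermediate returns, which is the real content of the claim. In the contradiction argument, after correctly deriving $f(p)<f_{\min,\Crit}$ (hence $f(q)<f_{\min,\Crit}$), your intended conclusion is $\td(q,p_0)>D$ from the two bounds $\td(q,p_0)\geq bT|f(q)-f(p_0)|$ and $D/(bT)\geq |f(x)-f(p_0)|+(f_{\max,\Crit}-f_{\min,\Crit})$. But this chain does not close: taking, say, $f(p_0)\geq f(q)$, the first bound gives $\td(q,p_0)\geq bT(f(p_0)-f(q))>bT(f(p_0)-f_{\min,\Crit})$, and comparing with the second would require $f(p_0)-f_{\min,\Crit}\geq D/(bT)$, which forces something like $f(p_0)\geq f_{\max,\Crit}$ — not implied by the hypotheses. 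The alternative route through $x$, namely $\td(q,p_0)\geq\td(x,q)-\td(x,p_0)$ combined with $\td(x,q)\geq\td(x,p)>D-\td(x,p_0)$, yields only $\td(q,p_0)>D-2\td(x,p_0)$, losing exactly the factor of two you were worried about. So the final chain of estimates is not a complete proof, and you would need either a sharper use of the additivity $\td(x,q)=\td(x,p)+\td(p,q)$ along the trajectory or a different reduction (for instance, first arguing that the forward orbit of $p$ never re\nobreakdash-enters $K$, using $\rho_T$ in place of $\td(\cdot,p_0)$) to close Part 2.
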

\begin{proof}
It follows easily from the fact that for any $p\in M$, $\td(p,\Phi^t(p))=|f(p)-f(\Phi^t(p))|/b$, and $f$ is decreasing along the flow $\Phi^t.$
\end{proof}
Now we are ready to prove Proposition \ref{stoke}
\begin{proof}
For any $\phi\in \O^{n_f(x)}_c(M)$, let \[D=bT(\sup_{p\in \Crit{f}\cup {\mathrm supp}(\phi)}|f(p)-f(p_0)|+\sup_{p,q\in \Crit{f}\cup {\mathrm supp}(\phi)}|f(p)-f(q)|),\]
we can find a compact submanifold $(N,\partial N)$ with boundary, s.t. ${\mathrm supp}(\phi)\cup \tilde{B}_D\subset N^\circ.$ Here ${\mathrm supp}(\phi)$ denotes the support of $\phi,$ $N^\circ$ denote the interior of $N.$

Now consider the double $(DN=N^+\cup N^-,g_{DN})$ of $N$,  $g_{DN}|_{{\mathrm supp}(\phi)\cup \tilde{B}_D}=g.$ By Observation \ref{transversal}, we can find a Morse function $\bar{f}$ on $DN$, s.t. $\bar{f}|_{{\mathrm supp}(\phi)\cup \tilde{B}_D}=f.$ We may as well assume that $(DN,g_{DN}, \tilde{f})$ satisfy Thom-Smale transversality condition. Then for any $y,z\in \Crit(\bar{f})$ with $n_{\bar{f}}(y)=n_{\bar{f}}(z)+1,$ let $m_{DN}(y,z)$ be the signed count of the  number of flow lines in $W^u_{\bar{f}}u(y)\cap W^s_{\bar{f}}(z).$

Then we claim:
\begin{enumerate}
\item \label{c1}By Observation \ref{flow}, if $y,z \in {\mathrm supp}(\o)\cup \tilde{B}_D$ are critical points of $\bar{f}$ with $n_{\bar{f}}(y)=n_{\bar{f}}(z)+1$, we have $m_{DN}(y,z)=m(y,z).$

\item \label{c2}If $z\notin {\mathrm supp}(\phi)\cup \tilde{B}_D(p_0),y\in {\mathrm supp}(\phi)\cap \tilde{B}_D$ are critical points of $\bar{f}$, and $W^s_{\bar{f}}(z)\cap W^u_{\bar{f}}(y)\neq \emptyset$, then $W^u_{\bar{f}}(z)\cap {\mathrm supp}(\phi)=\emptyset.$ This is because, by definition of $D$, Claim in Lemma \ref{rho} and properties of unstable manifolds, $\bar{f}(z)<\bar{f}(y)+|f(y)-f(p_0)|-D/(bT)\leq\inf_{p\in {\mathrm supp}(\phi)}\bar{f}(p).$ 
\end{enumerate}
As a result, by Proposition \ref{laudenbach}
\begin{align*}
\int_{W^u_f(x)}d\phi&=\int_{W^u_{\bar{f}}(x)}d\phi=\sum_{z\in \Crit(\bar{f}),n_{\bar{f}}(z)=n_{\bar{f}}(x)-1}m_{DN}(x,z)\int_{W^u_{\bar{f}}(z)}\phi\\
&=\sum_{y\in \Crit({f}),n_f(y)=n_f(x)-1}m_{DN}(x,y)\int_{W^u_{f}(y)}\phi\mbox{ (By Claim \ref{c2})}\\
&=\sum_{y\in \Crit({f}),n_f(y)=n_f(x)-1}m(x,y)\int_{W^u_f(y)}\phi\mbox{ (By Claim \ref{c1})}.
\end{align*}
\end{proof}
\subsubsection{An counterexample}
On closing this subsection, let's give a counterexample that when we drop the condition that $\nabla f$ has a positive lower bound near infinity,
$\tilde{\partial}^2=0$ may fail.

Consider the following heart shaped topological sphere $S$ with obvious height function $f$. Then we have four critical points $x,y,z,w$ as indicated below. Let $\gamma$ be a flow line connecting $y$ and $w$, and remove a point $p$ on $\gamma$. Making a conformal change of metric near point $p$, s.t. $S-p$ is complete under this new metric. Now we can see that $|\nabla f(q)|\to0$, as $q\to p.$ Since the flow line is invariant under the conformal change of metric, $\gamma-p$ is still a (broken) flow line. However, in this case, $\tilde{\partial}^2x=w$, which is nonzero.

In our previous arguments, the fact that $|\nabla f|$ has a positive lower bounded near the infinity play a crucial role. It forces the value of Morse function $f$ goes to infinity along a flow line if that flow line flows to infinity (See also Remark \ref{fgoestoinfinity} and Observation \ref{flow}). 

\begin{center}
\begin{tikzpicture}[scale=0.8]
\draw plot [smooth cycle] coordinates {(-5.5,4) (-7,6) (-5,8) (0,7) (5,8) (7,6) (5.5,4) (0,1.5)};
\draw [dashed] plot [smooth, tension=2] coordinates { (-7,6) (0,6.4) (7,6)};
\draw [dashed] plot [smooth, tension=2] coordinates { (-7,6) (0,5.4) (7,6)};
\draw plot [smooth, tension=2] coordinates { (0,7) (-0.5,4.2) (0,1.5)};
\draw [dashed] plot [smooth, tension=2] coordinates { (0,7) (0.3,4) (0,1.5)};
\draw [dashed](-0.5,4.2) circle (0.5cm);

\fill [white] (-0.5,4.2) circle (2pt);
\node (b) at (-0.7,4.2) {p};
\node (b) at (-0.6,3) {$\gamma$};
\fill [black] (0,7) circle (2pt);
\node (a) at (0,7.2) {y};
\fill [black] (-5,8) circle (2pt);
\node (b) at (-5,8.2) {x};
\fill [black] (5,8) circle (2pt);
\node (b) at (5,8.2) {z};
\fill [black] (0,1.5) circle (2pt);
\node (b) at (0,1.3) {w};
\begin{scope}
        \clip (-0.5,4.2) circle (0.5cm);
         \foreach \x in {-1,-0.75,-0.5,-0.25,0,0,0.25,0.5,0.75,1,1.25,1.5,1.75,2,2.25,2.5,2.75}
        \draw [xshift=\x cm](-2.6,6.3)--(1.6,2.3);
    \end{scope}
\end{tikzpicture}
\end{center}
\begin{rem}
    We would like to thank Shu Shen for providing this interesting example.
\end{rem}
\subsection{Isomorphism of $H^*(C^\bullet(W^u),\pat)$ and $H^*_{dR}(M,U_c)$}\label{relative}
For simplicity, we assume that $f$ is self-indexed Morse function, i.e., if $x$ is a critical point of $f$ with Morse index $i$, we require $f(x)=i$. Moreover, $B_1(x)$

Let $V_i=f^{-1}(-\infty,i+\frac{1}{2}]$, $0\leq i\leq n$.

Recall that we assume in a neighborhood $U_x$ of critical points $x$ of $f$, we have coordinate system $z=(z_1,...,z_n),$ such that
\[f=-z_1^2-...-z_{n_f(x)}^2+z_{n_f(x)+1}^2+...+z_n^2,\]
\[g=dz_1^2+...+dz_n^2,\]
Moreover $U_x$ is an Euclidean open ball around $x$ with radius $1.$ Also, these open balls are disjoint.

We have the following observation:
\begin{lem}\label{v0vn}
$V_0$ can be written as disjoint union of $\cup_{x\in Crit(f),n_f(x)=0}\tilde{U}_x$ and  $V$, where $V$ is some open subset diffeomorphic to $U_c$, $\tilde{U}_x$ is an Euclidean ball around $x$ with radius $\frac{1}{2}$. 

$V_n$ is diffeomorphic to $M.$
\end{lem}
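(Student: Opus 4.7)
The plan is to combine the Morse lemma at the index-$0$ critical points with a completed gradient flow argument on the critical-point-free complement. Self-indexing forces every critical point in $V_0$ to have index $0$ (hence $f=0$), and every critical point of $f$ to satisfy $f\leq n$; the rest is then standard Morse theory adapted to the noncompact, well tame setting.

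For the decomposition of $V_0$, fix an index-$0$ critical point $x$. In the Morse coordinates \eqref{flatMetric} we have $f=|z|^2$ on the unit ball $U_x$, so $V_0\cap U_x=\{|z|\leq 1/\sqrt{2}\}$ is a closed Euclidean ball whose boundary sphere lies in $\partial V_0=f^{-1}(1/2)$, while $\partial U_x\subset f^{-1}(1)$ is disjoint from $V_0$. Hence this ball is an entire connected component of $V_0$ and a radial rescaling identifies it with $\tilde U_x$. Letting $V$ be the union of the remaining components, which is necessarily free of critical points, one obtains the claimed decomposition $V_0=\bigl(\bigsqcup_{x\in\Crit(f),\,n_f(x)=0}\tilde U_x\bigr)\sqcup V$.

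To establish $V\cong U_c$, I will use the normalized negative gradient flow. Since $V$ contains no critical points, the vector field $Y:=-\nabla f/|\nabla f|^2$ is smooth on $V$, and its integral curves satisfy $\tfrac{d}{dt}f(\gamma(t))=-1$. Well tameness gives $|\nabla f|\geq\delta_1>0$ outside a compact set, so $Y$ has uniformly bounded speed at infinity; combined with completeness of $g$ this makes $Y$ complete, by the same reasoning as in Lemma \ref{complete}. The time-$(c+1/2)$ map of $Y$ then identifies the collar $V\cap f^{-1}[-c,1/2]$ with $f^{-1}(-c)\times[0,c+1/2]$; gluing with the identity on $f^{-1}(-\infty,-c]$ yields $V\cong f^{-1}(-\infty,-c]$, and hence $V\cong U_c$ after the standard boundary-collar thickening.

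For $V_n\cong M$, observe that $M\setminus V_n^\circ=f^{-1}[n+1/2,\infty)$ is free of critical points, so the forward flow of $+\nabla f/|\nabla f|^2$ is smooth, has unit $f$-speed, and is complete by the same well tameness argument; this produces a diffeomorphism $f^{-1}(n+1/2)\times[0,\infty)\cong M\setminus V_n^\circ$. Attaching this infinite collar to $V_n$ along $\partial V_n$ recovers all of $M$, and since an infinite collar attached to a manifold-with-boundary is diffeomorphic to an open collar extension of that manifold, one obtains $V_n\cong M$. The main technical obstacle in both halves is the completeness and surjectivity of the gradient flow on the noncompact manifold, which is precisely where the well tameness of $(M,g,f)$ is used.
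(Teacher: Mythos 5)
Your proof is correct and rests on the same tools the paper uses — the explicit Morse coordinates of \eqref{flatMetric} near index-$0$ critical points, self-indexing to exclude higher-index critical points from $V_0$, and the normalized gradient flow — but you run the flow argument in the opposite direction and replace the paper's stable-manifold step by a topological one. The paper defines $\Phi^t$ to be the flow of $X_f=\nabla f/|\nabla f|^2$, pushes $U_c$ \emph{forward} by $\Phi^{c+1/2}$ into $V_0$, and proves disjointness from the $\tilde U_x$'s by noting $\tilde U_x\subset W^s(x)$ while $f<-c<0=f(x)$ on $U_c$; you instead read the decomposition off directly, observing that $V_0\cap U_x=\{|z|\le 1/\sqrt 2\}$ is clopen in $V_0$ (its frontier in $M$ splits into a piece in $\partial V_0=f^{-1}(1/2)$ and a piece $\partial U_x\subset f^{-1}(1)$ outside $V_0$), hence is a whole connected component, and then retract the critical-point-free remainder $V$ onto $U_c$ with the downward normalized flow. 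Your component argument is a clean substitute for the paper's $W^s(x)$ computation, and you usefully spell out the completeness/escape considerations for the normalized flow and the $V_n\cong M$ case that the paper dismisses with ``similarly.'' One caveat: when you cite Lemma \ref{complete}, note that the vector field there is modified by the cutoff $F$ precisely to tame the singularity at critical points, whereas $-\nabla f/|\nabla f|^2$ is singular at $\Crit(f)$; your argument works because the trajectories you use never approach a critical point (no index-$0$ stable manifold meets $V$, and all other critical values exceed $1/2$), but that should be said rather than inherited from the lemma. Finally, like the paper you gloss over the open/closed mismatch ($\{|z|\le 1/\sqrt 2\}$ versus the radius-$\tfrac12$ ball $\tilde U_x$; the manifold-with-boundary $V_n$ versus the boundaryless $M$); this imprecision is in the statement of the lemma itself and is harmless for the spectral-sequence application, but a fully rigorous version should work with interiors or say ``up to a collar.''
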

\begin{proof}
Let $X_f:=\frac{\nabla f}{|\nabla f|^2}$, $\Phi^t$ be the flow generated by $X_f$. Then we have \[\left(\Phi^{c+\frac{1}{2}}(U_c)\right)\cap\left(\cup_{x\in Crit(f),n_f(x)=0}\tilde{U}_x\right)=\emptyset.\]

This is because:
\begin{itemize}
    \item If $f(p)\leq c-\frac{1}{2}$, then $f(\Phi^{c+\frac{1}{2}}(p))<0$. Hence $\Phi^{c+\frac{1}{2}}(p)\notin \cup_{x\in Crit(f),n_f(x)=0}\tilde{U}_x$.
    \item If $c-\frac{1}{2}\leq f(p)<c$, and if $\Phi^{c+\frac{1}{2}}(p)\in \tilde{U}_x$ for some $x\in Crit(f)$ with Morse index $n_f(x)=0$. Then $\Phi^{c+\frac{1}{2}}(p)\in W^s(x)$, which implies $p\in W^s(x)$. But this is impossible since $f(p)<-c<0=f(x)$. 
\end{itemize}

Similarly, we can prove that $V_n$ is diffeomorphic to $M.$
\end{proof}

Let $C_*(V_i,U_c)$ be complex of relative singular chains. Then we have
\[C_*(V_n,U_c)\supset C_*(V_{n-1},U_c)\supset\cdots C_*(V_{0},U_c).\]

By a similar spectral sequence argument as in the proof of Theorem 1.6 in \cite{bismutzhang1992cm} and Lemma \ref{v0vn}, one can show that 
\[H_*(C^\bullet(W^u),\pat)\simeq H_*(M,U_c).\]
Thus, it follows from the universal coefficient theorem that
\[H^*(C^\bullet(W^u),\pat)\simeq H^*(M,U_c).\]

\subsection{Isomorphism of $H^*_{(2)}(M,d_{Tf})$ and $H^*(C^\bullet(W^u),\pat)$ }\label{Independent}

We will first show that the chain map $\J: (F_{Tf}^{[0,1],*},d_{Tf})\mapsto C^*((W^u)',\patt')$ defined in Section \ref{tsw} is in fact an isomorphism when $T$ is sufficiently large. Hence $\mathcal{J}$ induced an isomorphism between $H^*_{(2)}(M,d_{Tf})$ and $H^*(C^\bullet(W^u),\pat)$ in that case. 

More precisely we will follow the arguments in Chapter 6 of \cite{zhang2001lectures}, with necessary modification, to show that there exists $T_6>T_0$, such that $\J$ is an isomorphism whenever $T>T_6.$  (We point out that the explicit description of $T_6$ is more involved than $T_0$.)
In fact, the only difference is that we need a more refined estimate in Theorem 6.17 of \cite{zhang2001lectures}, that is:
\begin{equation}\label{thm617}|\mathcal{P}\tau_{x,T}-\tau_{x,T}|\leq C\exp(-aT(\rho+c)) \|\tau_{x,T}\|_{L^2},\end{equation}
where $\mathcal{P}$ is the orthogonal projection from $L^2\Lambda(M)$ to $F^{[0,1],*}$, and C, c are positive constants.

Here $\tau_{x,T}$ is defined as follows. Notice that in Section \ref{MorseInequality}, we require that in a neighborhoof $U$ of $x$, the metric and Morse function is of the form (\ref{flatMetric}).
Hence, let $\alpha_{x}$ be a bump function whose support is contained in $U$, and $\alpha_{x}\equiv1$ in a neighborhood $V$ of $x$. Now let 
\[\tau_{x,T}=\alpha_{x}\exp(-T^2|z|^2)dz_1\wedge \cdots\wedge  dz_{n_f(x)}.\]

To obtain the estimate (\ref{thm617}), pick a bump function $\eta$ with compact support, such that $\eta\equiv1$ on $K$. Then by our Agmon estimate, we have \[|(1-\eta)(\mathcal{P}\tau_{x,T}-\tau_{x,T})|\leq C\exp(-aT(\rho+c))\|\tau_{x,T}\|_{L^2}.\]
The estimate of \[|\eta(\mathcal{P}\tau_{x,T}-\tau_{x,T})|\leq C\exp(-cT)\|\tau_{x,T}\|_{L^2}\] 
now follows from exactly the same argument in the proof of Theorem 6.17 of \cite{zhang2001lectures}.

Now it remains to prove that when $T\in(T_0, T_6]$, $H^*_{(2)}(M,d_{Tf})$ and $H^*(C^\bullet(W^u),\pat)$ are still isomorphic.

We only present the proof for the case when $(M,g,f)$ is strongly tame, the case of well tame being exactly the same except notationally. In this case, $T_0=0$. The idea is to show that if $S>0,$ then for any $T\in [7/8S,S]$, $H^*_{(2)}(M,d_{Tf})$ and $H^*_{(2)}(M,d_{Sf})$ are isomorphic. Hence $H^*_{(2)}(M,d_{Tf})$ is independent of $T\in (0,\infty)$, which finishes the proof of isomorphism of $H^*_{(2)}(M,d_{Tf})$ and $H^*(C^\bullet(W^u),\pat)$.

For simplicity, we prove that $H^*_{(2)}(M,d_{7f})$ and $H^*_{(2)}(M,d_{8f})$ are isomorphic, the general case being similar.

Thus fix coefficient $b=\frac{63}{64}$ in Lemma \ref{t0} and Theorem \ref{est}.

\def\ls{L^2\Lambda^*(M)}

Define $M_{f}:(F^{*,[0,1]}_{8f},d_{8f}) \mapsto (\Omega^*_{(2)}(M), d_{7f})$;  $\forall w\in F^{*,[0,1]}_{8f},$ $M_f(w)=\exp(f)w$. Similarly
$M_{-f}: (F^{*,[0,1]}_{7f}, d_{7f}) \mapsto (\Omega^*_{(2)}(M), d_{8f})$; $\forall w\in F^{*,[0,1]}_{7f},$ $M_{-f}(w)=\exp(-f)w.$

Clearly these are chain maps once we check that $M_f$ and $M_{-f}$ are well defined.
To this end, let's verify that $|f(p)|\leq |f(p_0)|+\frac{1}{bT}\rho_T(p),$ where $p_0$ is the fix point in defining $\rho_T.$ Indeed, let $\gamma:[0,\rho_{T}(p)]$ be a normal minimal geodesic connecting $p_0$ and $p$, in the metric $\gt$. Then
\[|\frac{d}{dt}f\circ\gamma(t)|=|<\tilde{\nabla}f,\gamma'>_{\gt}|\leq\frac{1}{bT}.\]

Now the $L^2$ bound of $M_f(w)$ (resp. $M_{-f}(w)$) follows by Theorem \ref{est} and the standard volume comparison, 
Hence $M_f$ induces a homomorphism (still denote it by $M_f$) from $H^*_{(2)}(M,d_{8f})$ to $H^*_{(2)}(M,d_{7f})$.

Our next step is to show that $M_f$ is injective. Suppose we have $w\in \ker(\Box_{8f})$, s.t. $M_fw$ is exact, which means that we can find $\alpha\in Im(\delta_{7f}),$ s.t $\exp(f)w=d_{7f}\alpha(=(d_{7f}+\delta_{7f})\alpha).$

Thus
\[\Box_{7f}\alpha=(d_{7f}+\delta_{7f})\exp(f)w=\exp(f)d_{8f}w+\exp(2f)\delta_{6f}w\]
\[=0+\exp(2f)(\delta_{8f}w-\iota_{2f}w)=-\exp(2f)\iota_{2f}w.\]

By Lemma \ref{epde}, $|\alpha|\leq C\exp(-1/3\rho_7).$ Consequently, $\exp(-f)\alpha\in \ls$, and $w=d_{8f}\exp(-f)\alpha$ is exact.

As a result, $M_f$ is injective.
Similarly, $M_{-f}$ is also injective. Therefore, $H^*_{(2)}(M,d_{8f})$ and $H^*_{(2)}(M,d_{7f})$ are isomorphic.

\subsection{$\L$ is bijective}\label{bijec}
\def\bPhi{\bar\Phi}
\begin{itemize}
\item $\L$ is injective:

Let $\o\in F^{[0,1],j}_{Tf}$. Assume that $\L(\o)$ is exact, then there exists $\phi\in\Omega^{j-1}(M),\phi'\in \Omega^{j-2}(U_c),$ s.t.
\[\exp(Tf)\o=d\phi, \o'=(-1)^j(\phi-d\phi'),\]
where $\o'=(-1)^j\int_0^\infty(\bPhi^s)^*(\exp(Tf)\iota_{X_f}\o)ds.$

Let $c'=c+2$, then choose a smooth function $\chi : M \mapsto \R$ such that
$\chi|_{U_{c'}} = 1$ and $\chi|_{M-U_c}=0$
Then denoting $\psi= \phi - d(\chi\phi')$, we have
\[d\psi = \exp(Tf)\o \mbox{ and } \psi|_{U_{c'}} = (-1)^j\o' .\]

Also, on $U_{c'},$ \begin{equation}\label{mu} \iota_{X_f}\psi=\iota_{X_f}\int_0^\infty(\bPhi^s)^*(\exp(Tf)\iota_{X_f}\o)ds=\int_0^\infty(\bPhi^s)^*(\exp(Tf)\iota_{X_f}\iota_{X_f}\o)ds=0\end{equation}

Next, choose a smooth function $\eta,$ s.t. $\eta=0$ on $K$, and $\eta=1$ in $f^{-1}((S+1,\infty)\cup(-\infty,-I-1))$.
Then we set
\[\psi'(p) = \psi(p) -d(\eta\int_{-f(p)+I/2+S/2}^0(\bPhi^s)^*\iota_{X_f}\psi ds), p\in U_c',\]
\[\psi'(p) = \psi(p) -d(\eta\int^{-f(p)+I/2+S/2}_0(\bPhi^s)^*\iota_{X_f}\psi ds), p\in U_c,\]
By (\ref{mu}), we have $\psi'=\psi$ on $U_{c'}$.

Thus we can see that for $p\in U_{c'}'$,
\begin{align*}
\iota_{X_f}\psi'&=\iota_{X_f}\psi-\iota_{X_f}d(\eta\int_{-f(p)+I/2+S/2}^0(\bPhi^s)^*\iota_{X_f}\psi ds)\\
&=\iota_{X_f}\psi-(\int_{-f(p)+I/2+S/2}^0\iota_{X_f}d(\bPhi^s)^*\iota_{X_f}\psi ds)\\&-\iota_{X_f}df(\bPhi^{-f(p)+I/2+S/2})^*\iota_{X_f}\psi\\
&=\iota_{X_f}\psi-(\int_{-f(p)+I/2+S/2}^0\frac{d}{ds}(\bPhi^s)^*\iota_{X_f}\psi ds-(\bPhi^{-f(p)+I/2+S/2})^*\iota_{X_f}\psi\\
&=\iota_{X_f}\psi-\iota_{X_f}\eta\psi+(\bPhi^{-f(p)+I/2+S/2})^*\iota_{X_f}\psi-(\bPhi^{-f(p)+I/2+S/2})^*\iota_{X_f}\psi\\
&=0.
\end{align*}
Similarly, we have $\iota_{X_f}\psi'=0$ in $U_{c'}.$

As a consequence,
\[\frac{d}{ds}(\bPhi^s)^*\psi'|_{s=t}=\iota_{X_f}(\bPhi^t)^*d\psi'=\iota_{X_f}(\bPhi^t)^*\exp(Tf)\o\]
on $U.$

Therefore, on $U_c'$, we have
\begin{align}\begin{split}\label{ucc}|(\bPhi^t)^*\psi'|&=|\int_0^t\iota_{X_f}(\bPhi^s)^*\exp(Tf)\o ds|\\
&\leq \int_0^t|\iota_{X_f}(\bPhi^s)^*\exp(Tf)\o| ds\\
&\leq \int_0^t|(\bPhi^s)^*\exp(Tf)\o| ds\\
&\leq C\exp((1-a)Tt)\mbox{ (Since on $U_c'$, $(\bPhi^t)^*\o\leq C\exp(-aTt))$.}
\end{split}
\end{align}

We claim that $\exp(-Tf)\psi'\in L^2\Lambda^*(M).$ Since $d\psi'=\exp(Tf)\o$ implies that $d_{Tf}\exp(-Tf)\psi'=\o$,  $\o$ is trivial in $H^*(\Omega_{(2)}^{\bullet},d_{Tf}).$

Now we prove the claim:

It suffices to prove that $\int_{U_{c'}\cup U_c'}|\exp(-Tf)\psi'|^2dvol<\infty.$ Let $K_{c'}=f^{-1}\{-c'\}, K_c'=f^{-1}\{c\},$ and endow they with induced metrics. Define a diffeomorphism
$\Psi_{c'}:K_{c'}\times(0,\infty)\mapsto U_{c'}$ as follow
\[\Psi_{c'}(p,t)=\bPhi^t(p).\]
Similarly, we can define a diffeomorphism $\Psi_{c}':K_{c}'\times(0,\infty)\mapsto U_{c}'.$

On $U_{c'}$, $|\psi'|=|\o'|$, hence for $p\in K_{c'}$
\begin{align}\begin{split}\label{equc}
&|(\bPhi^t)^*(\exp(-Tf)\psi')(p)|=(\bPhi^t)^*(\exp(-Tf)(p)\int_0^\infty(\bPhi^s)^*(\exp(Tf)\iota_{X_f}\\o(p))ds)\\
&=\exp(Tc'+Tt)\int_0^\infty((\exp(-Tc'-T(s+t))(\bPhi^{s+t})^*\iota_{X_f}\o(p))ds)\\
&\leq \int_0^\infty((\exp(-Ts)|(\bPhi^{s+t})^*\iota_{X_f}\o(p))|ds\\
&\leq C\exp(-aTt)\int_0^\infty((\exp(-(a+1)Ts)ds\mbox{ (Since $(\bPhi^{s+t})^*\o\leq C\exp(-T(s+t)))$}\\
&\leq C\exp(-aTt).
\end{split}\end{align}
Then,
\begin{align*}
&|\int_{U_{c'}}|\exp(-Tf)\psi'|^2dvol|=|\int_0^\infty \int_{K_{c'}}(\bPhi^t)^*(|\exp(-Tf)\o'|^2dvol_{K_{c'}}dt)|\\
&\leq\int_0^\infty \int_{K_{c'}}\exp(-2aTt)(\bPhi^t)^*(dvol_{K_{c'}}dt)\\
&\leq C\int_0^\infty \int_{K_{c'}}\exp(-2a'Tt)dvol_{K_{c'}}dt<\infty, \mbox{ (Similar with Lemma \ref{vol})}
\end{align*}
where $a'$ is some positive number which is smaller than $a.$

For $p\in U_c',$ we have
\begin{align*}
&|(\bPhi^t)^*(\exp(-Tf)\psi')(p)|=|(\exp(-Tc-Tt)(\bPhi^t)^*\psi')(p)|\\
&\leq C\exp(-aTt) \mbox{ (By (\ref{ucc}))}.
\end{align*}
For the same reason, we have $\int_{U_{c}'}|\exp(-Tf)\psi'|^2dvol<\infty$.
\item $\L$ is surjective:
\def\o{\omega}
We claim that any cohomology class $\xi\in H^j(M, U_{c})$ can be
represented by a smooth closed $j$-form $\phi$ so that $\phi|_{U_c}=0.$
Also, it behaves on $U$ as follows: $\iota_{X_f}\phi = 0$ and $(\bPhi^t)^*\phi$ does not depend on $t$ for large $t$. Then it
follows that $\exp(-Tf)\phi$ belongs to $L^2\Lambda^*(M)$ (follows from the similar argument as above). Let $\nu\in Ker\Box_{Tf},$ s.t. $\nu-\exp(-Tf)\phi$ is exact. Then we can see that $\L(\nu)\in\xi$ hence $\L$ is surjective. This is because, we can find $\psi\in Im(\delta_{Tf})$, s.t. $\nu-\exp(-Tf)\phi=d_{Tf}\psi=(d_{Tf}+\delta_{Tf})\psi.$ As a result, $\Box_{Tf}\psi=0$ on $U_c$. Hence $\psi$ is of exponential decay in $U_c$, which implies that $\L(\psi)$ is well define. Now $\L(\nu)-\L(\exp(-Tf)\phi)=d_C\L(\psi)$ implies that $\L(\nu)\in \xi.$

It suffices to prove the claim:

It is clear that we may realize $\xi$ by a closed form $\phi$ on $M$ with $d\phi = 0$ and
$\phi|_{U_{c}}=0$.
Let $\eta: M \mapsto \R$ denote a smooth function which is identically $0$ on $K$ and identically $1$ on $f^{-1}((S+1,\infty)\cup(-\infty,-I-1))$. The form
$\phi'(p) = \phi(p) -d(\eta\int_{-f(p)+I/2+S/2}^0(\bPhi^s)^*\iota_{X_f}\phi ds)$
is cohomologous to $\phi$ and clearly satisfies what we claimed.
\end{itemize}

\section{Appendix: Decomposition of $L^2$ space}\label{l2}
In this section, we investigate the decomposition (\ref{dec}). For this purpose we first have to understand the Friedrichs extension of $\Delta_{H,f}$. Moreover, all operators considered in this section are closurable.
\subsection{Review on Friedrichs extension}
\def\B{\operatorname{B}}
\def\A{\operatorname{A}}
\def\I{\operatorname{I}}
Let $\A$ be a nonnegative, symmetric (unbounded) operator on Hilbert space $\H$, with $\Dom(\A)=V,$ i.e.
\[(\A\alpha,\beta)_\H=(\alpha,\A\beta)_\H, \ \forall \alpha,\beta\in V; \ \ (\A\alpha,\alpha)_\H\geq 0.\]

Define a norm $\|\cdot\|_{V_1}$ on $V$ by
\[\|\alpha\|_{V_1}^2=(\alpha,\alpha)_\H+(\alpha,\A\alpha)_\H.\]
Let $V_1$ to be the completion of $V$ under $\|\cdot\|_{V_1}.$ Then for any $\beta\in \H$, one can construct a bounded linear functional $L_{\beta}$ on $V_1$ as follows
\begin{equation}\label{eq}
	L_{\beta}(\phi)=(\phi,\beta)_\H,\phi\in V_1.
\end{equation}
Since $|(\phi,\beta)_\H|\leq \|\phi\|_\H\|\beta\|_\H\leq \|\phi\|_\H\|\beta\|_{V_1},$ $L_\beta$ is indeed bounded functional on $V_1.$ By Riesz representation, there exist $\gamma\in V_1$, s.t. $(\phi,\gamma)_{V_1}=(\phi,\beta)_\H.$

Let $\B:\H\mapsto V_1, \beta\mapsto \gamma,$ then $\B$ is bounded and injective. Take $\Box=\B^{-1}-\I,$ where $\I$ is the identity map, then $\Box$ is the Friedrichs extension of $\A,$ with $\Dom(\Box)=\Im(\B).$

\begin{rem}
	From the construction of Friedrichs extension $\Box$ of $\A$, we can see that $Dom(\Box)=\Im((\I+\Box)^{-1}).$
\end{rem}
\def\S{\operatorname{S}}
Let $\T,\S$ be two unbounded operators on Hilbert space $\H$, s.t.

\begin{enumerate}[(i)]
	\item\label{i1} \[V=\Dom(\T)=\Dom(\S), \T V\subset V.\]
	\item \label{i2} $S$ is a formal adjoint of $T:$  \[(\T\alpha,\beta)_\H=(\alpha,\S \beta)_\H,\]
\end{enumerate}

Let $\|\cdot\|_{W}$ be the norm on $V$ given by
\[\|\alpha\|^2_{W}=(\alpha,\alpha)_\H+(\T\alpha,\T\alpha)_\H,\alpha\in V,\]
and $W$ be the completion of $V$ under the norm $\|\cdot\|_{W}.$ Then we can extend $\T$ to $\bar{\T}_{min}$ with $\Dom(\bar{\T}_{min})=W.$

Let $\bar{\S}_{max}$ be the closure of $\S$ with $\Dom(\bar{\S}_{max})=\{\alpha\in\H:|(\alpha,\T\phi)_\H|\leq M_\alpha\|\phi\|_\H,\forall\phi\in V\}$. Namely, for any $\alpha\in \Dom(\bar{\S}_{max})$, since $V$ is dense in $\H$, by Riesz representation, there exists unique $\nu\in \H$, such that $(\nu,\phi)_H=(\alpha,\T\phi). $ Now define $\bar{\S}_{max}(\alpha)=\nu.$

Since $\T V\subset V$, $\S\T$ is symmetric and nonnegative with $\Dom(ST)=V.$

\begin{prop}\label{prop}
	The Friedrichs extension $\Delta$ of $\S\T$ is just $\bar{\S}_{max}\bar{\T}_{min}.$
\end{prop}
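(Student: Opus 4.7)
The plan is to identify the Friedrichs extension directly via its characterization in terms of the operator $B$ and match it with $\bar{S}_{max}\bar{T}_{min}$.

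First I would observe that the form space $V_1$ for $ST$ coincides (as a Hilbert space) with $W$: the inner product $\|\alpha\|_{V_1}^2 = (\alpha,\alpha)_H + (\alpha, ST\alpha)_H$ equals $\|\alpha\|^2 + \|T\alpha\|^2 = \|\alpha\|_W^2$ on $V$ by the formal adjoint relation (i), (ii), and both spaces are defined as completions of $V$ in this norm. Hence the bounded injective operator $B:H\to V_1=W$ from the review subsection satisfies $\Dom(\Delta)=\Im(B)$ and $\Delta = B^{-1}-I$.

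Next I would unpack the defining identity for $B$. For $\beta\in H$ and $\gamma = B\beta\in W$, the identity $(\phi,\gamma)_W = (\phi,\beta)_H$ for all $\phi\in W$ becomes, when restricted to $\phi\in V$ and expanded using the definition of $\|\cdot\|_W$,
\begin{equation*}
(T\phi,\bar{T}_{min}\gamma)_H = (\phi,\beta-\gamma)_H, \qquad \forall\phi\in V.
\end{equation*}
By the very definition of $\bar{S}_{max}$, this says $\bar{T}_{min}\gamma \in \Dom(\bar{S}_{max})$ and $\bar{S}_{max}\bar{T}_{min}\gamma = \beta-\gamma$. Equivalently $(I+\bar{S}_{max}\bar{T}_{min})\gamma = \beta$. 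This gives both $\Im(B)\subset \Dom(\bar{S}_{max}\bar{T}_{min})$ and the equality $\Delta = \bar{S}_{max}\bar{T}_{min}$ on $\Im(B)$.

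Finally I would establish the reverse inclusion $\Dom(\bar{S}_{max}\bar{T}_{min})\subset\Im(B)$ by a uniqueness argument. Given $\alpha\in\Dom(\bar{S}_{max}\bar{T}_{min})$, set $\beta = (I+\bar{S}_{max}\bar{T}_{min})\alpha\in H$; then $B\beta$ lies in $\Im(B)$ and by the previous step also satisfies $(I+\bar{S}_{max}\bar{T}_{min})B\beta = \beta$. Thus $x := \alpha - B\beta$ solves $(I+\bar{S}_{max}\bar{T}_{min})x = 0$ in $\Dom(\bar{S}_{max}\bar{T}_{min})$. Pairing with $x$ and using the extended adjoint relation $(\bar{S}_{max}y,z)_H = (y,\bar{T}_{min}z)_H$ for $y\in\Dom(\bar{S}_{max})$, $z\in W$, yields $\|x\|^2 + \|\bar{T}_{min}x\|^2 = 0$, forcing $x=0$.

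The main obstacle, though minor, is justifying the extended adjoint identity $(\bar{S}_{max}y,z)_H = (y,\bar{T}_{min}z)_H$ for $z\in W$ (not just $z\in V$). This follows by choosing $\phi_n\in V$ with $\phi_n\to z$ in $W$, so that $T\phi_n\to \bar{T}_{min}z$ in $H$, and passing to the limit in $(\bar{S}_{max}y,\phi_n)_H = (y,T\phi_n)_H$; this density step is the only genuine technicality and deserves to be spelled out explicitly before using it in the uniqueness argument.
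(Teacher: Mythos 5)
Your proof is correct, and it reorganizes the argument in a way that differs genuinely from the paper in both inclusions. For $\Dom(\Delta)\subset\Dom(\bar{\S}_{max}\bar{\T}_{min})$ the paper (its part (b)) fixes $\alpha=(\I+\Delta)^{-1}f$ and verifies $\bar{\T}_{min}\alpha\in\Dom(\bar{\S}_{max})$ through the functional-calculus bound $|(\bar{\T}_{min}\alpha,\T g)_\H| = |(f,(\I+\Delta)^{-1}\Delta g)_\H|\le \|f\|_\H\|g\|_\H$; you instead unpack the Riesz identity $(\phi,B\beta)_W=(\phi,\beta)_\H$ for $\phi\in V$ directly, which reads off both the membership $\bar{\T}_{min}(B\beta)\in\Dom(\bar{\S}_{max})$ and the formula $(\I+\bar{\S}_{max}\bar{\T}_{min})B\beta=\beta$ at once --- a cleaner derivation of this direction. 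For the reverse inclusion the paper (its part (a)) takes $\alpha\in\Dom(\bar{\S}_{max}\bar{\T}_{min})$, sets $\beta=\alpha+\bar{\S}_{max}\bar{\T}_{min}\alpha$, and verifies $(\alpha,\phi)_W=(\beta,\phi)_\H$ for all $\phi\in W$ by density to conclude $\alpha=B\beta$; you instead bootstrap from your first direction and close with the coercivity argument $\|x\|^2+\|\bar{\T}_{min}x\|^2=0$, i.e.\ injectivity of $\I+\bar{\S}_{max}\bar{\T}_{min}$. The paper's route makes the two inclusions logically independent, while yours threads the first into the second; in exchange you avoid re-deriving the $W$-inner-product identity a second time. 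You correctly flag the one real technicality shared with the paper --- extending the adjoint pairing $(\bar{\S}_{max}y,z)_\H=(y,\bar{\T}_{min}z)_\H$ from $z\in V$ to $z\in W$ by density --- which the paper likewise uses implicitly in both (a) and (b). Both proofs are sound; yours is a clean alternative.
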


\begin{proof}
	Since $\T V\subset V,$ we see that $V_1$ constructed in (\ref{eq}) is the same as $W.$
	Indeed, for any $\phi,\psi\in V,$ we have
	\begin{align*}
		(\psi,\phi)_\H+(\T\psi,\T\phi)_\H
		=(\psi,\phi)_\H+(\S\T\psi,\phi)_\H
	\end{align*}
	Hence, we have
	\[\Dom(\Delta)=\{\alpha\in W: \alpha=(I+\Delta)^{-1}f, f\in\H\},\]
	\[\Dom(\bar{S}_{max}\bar{T}_{min})=\{\alpha\in W: \bar{\T}_{min}\alpha\in \Dom(\bar{\S}_{max})\}.\]
	We now divide our discussion in two cases.
	
	(a) We first prove that $\Dom{\bar{\S}_{max}\bar{\T}_{min}}\subset \Dom(\Delta),$ and $\forall\alpha\in \Dom(\bar{\S}_{max}),$ $\bar{\S}_{max}\bar{\T}_{min}\alpha=\Delta\alpha.$
	
	For any $\alpha\in\Dom{\bar{\S}_{max}\bar{\T}_{min}},$ let \begin{equation}\label{eq2}\beta=\alpha+\bar{\S}_{max}\bar{\T}_{min}\alpha.\end{equation} Then for any $\phi\in W,$ we have
	\begin{align}
		\begin{split}\label{eq1}
			&(\alpha,\phi)_{W}=\lim_{n\rightarrow\infty}(\alpha,\phi_n)_{W}\\
			&=\lim_{n\rightarrow\infty}(\alpha,\phi_n)_\H+(\bar{\T}_{min}\alpha,\T\phi_n)_\H\\
			&=\lim_{n\rightarrow\infty}(\alpha,\phi_n)_\H+(\bar{\S}_{max}\bar{\T}_{min}\alpha,\phi_n)_\H\mbox{ (Since $\phi_n\in V,\bar{\T}_{min}\alpha\in \Dom({\bar{\S}_{max}})$ )}\\
			&=\lim_{n\rightarrow\infty}(\alpha+\bar{\S}_{max}\bar{\T}_{min}\alpha,\phi_n)_\H=(\alpha+\bar{\S}_{max}\bar{\T}_{min}\alpha,\phi)_\H=(\beta,\phi)_\H,
		\end{split}
	\end{align}
	where $\phi_n\in V$, and $\phi_n\rightarrow \phi$ w.r.t. $\|\cdot\|_W.$
	By the construction of Friedrichs extension and (\ref{eq1}), we deduce that  $\alpha\in (I+\Delta)^{-1}\H$ and $(I+\Delta)\alpha=\beta.$ Comparing with (\ref{eq2}), we obtain $\bar{\S}_{max}\bar{\T}_{min} \alpha=\Delta \alpha.$
	
	(b) We then show that $\Dom(\Delta)\subset \Dom(\bar{\S}_{max}\bar{\T}_{min}).$
	
	Take any $\alpha\in \Dom(\Delta)\subset W,$ we can find $f\in \H$, s.t. $\alpha=(I+\Delta)^{-1}f.$ We now just need to show that $\bar{\T}_{min}\alpha\in \Dom(\bar{\S}_{max}).$ For this, it suffices to prove that $\forall g\in V,$ $|(\bar{\T}_{min}\alpha,\T g)_\H|\leq M\|g\|_\H$ for some $M>0.$
	
	In fact, by standard functional calculus,
	\begin{align*}
		&|(\bar{\T}_{min}\alpha,\T g)_\H|=|(\alpha, STg)_\H| \mbox{ (via $\alpha_n\in V, \alpha_n\rightarrow \alpha $ w.r.t $\|\|_{W}$)}\\
		&=|((I+\Delta)^{-1}f,\Delta g)_\H|\\
		&=|(f,(I+\Delta)^{-1}\Delta g)_\H| \\
		&\leq M\| g\|_\H
	\end{align*}
\end{proof}

\subsection{The Friedrichs extension of $\Delta_{H,f}$}

By Proposition \ref{prop}, we can see that the Friedichs extension $\Box_{f}$ of $\Delta_{H,f}$ is $(\overline{d_f+\delta_f})_{max}(\overline{d_f+\delta_f})_{min}$.

If $0$ is an eigenvalue of $\Box_{f}$ with finite multiplicity, we have the following decomposition
\begin{equation}\label{eq3}L^2\Lambda^*(M)=\ker\Box_{f}\oplus\Im(\overline{d_f+\delta_f})_{max}.\end{equation}

Could we say more about decomposition (\ref{eq3})?
\begin{prop}\label{prop1}
	\def\S{\operatorname{S}}
	Let $\T,\S$ be two unbounded operators on Hilbert space $\H$, such that
	\begin{enumerate}
		\item \[V=\Dom(\T)=\Dom(\S), \T V\subset V.\]
		\item $\Im(T)$ is orthogonal to $\Im(S)$, and \[(\T\alpha,\beta)_\H=(\alpha,\S \beta)_\H.\]
		\item $T+S$ is essential self-adjoint, i.e. $\overline{(T+S)}_{min}=\overline{(T+S)}_{max}.$
	\end{enumerate}
	Then 
	\[\overline{T+S}=\bar{T}_{min}|_{\Dom{\bar{S}_{min}\cap\Dom\bar{T}_{min}}}+\bar{S}_{min}|_{\Dom{\bar{S}_{min}\cap\Dom\bar{T}_{min}}}\]
	\[=\bar{T}_{max}|_{\Dom{\bar{S}_{max}\cap\Dom\bar{T}_{max}}}+\bar{S}_{max}|_{\Dom{\bar{S}_{max}\cap\Dom\bar{T}_{max}}}\]
	
\end{prop}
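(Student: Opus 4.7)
The plan is to use the orthogonality $\Im(T)\perp\Im(S)$ together with the essential self-adjointness of $T+S$ to squeeze $\Dom(\overline{T+S})$ between $\Dom(\bar T_{\min})\cap\Dom(\bar S_{\min})$ and $\Dom(\bar T_{\max})\cap\Dom(\bar S_{\max})$ via two complementary inclusions. The key algebraic input is the Pythagorean identity
\[
\|(T+S)\phi\|_\H^2 \;=\; \|T\phi\|_\H^2 + \|S\phi\|_\H^2, \qquad \phi\in V,
\]
which is immediate from $(T\phi,S\phi)_\H=0$. In particular, the graph norm of $T+S$ on $V$ controls the graph norms of $T$ and $S$ individually.

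First I would establish $\Dom(\overline{T+S})\subseteq\Dom(\bar T_{\min})\cap\Dom(\bar S_{\min})$, together with $\overline{T+S}=\bar T_{\min}+\bar S_{\min}$ on the intersection. Given $\alpha\in\Dom(\overline{T+S})$, choose $\alpha_n\in V$ with $\alpha_n\to\alpha$ and $(T+S)\alpha_n\to\overline{T+S}\,\alpha$. Applying the identity to $\alpha_n-\alpha_m$ shows that both $T\alpha_n$ and $S\alpha_n$ are Cauchy, so $\alpha\in\Dom(\bar T_{\min})\cap\Dom(\bar S_{\min})$ and $\overline{T+S}\,\alpha=\bar T_{\min}\alpha+\bar S_{\min}\alpha$.

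Next I would establish the reverse inclusion $\Dom(\bar T_{\max})\cap\Dom(\bar S_{\max})\subseteq\Dom(\overline{T+S})$ with $\overline{T+S}=\bar T_{\max}+\bar S_{\max}$ on the intersection. For such $\alpha$ and any $\phi\in V$, since $\bar S_{\max}$ (respectively $\bar T_{\max}$) is by construction the Hilbert space adjoint of $T$ (respectively $S$) restricted to $V$, one computes
\[
(\alpha,(T+S)\phi)_\H \;=\; (\alpha,T\phi)_\H+(\alpha,S\phi)_\H \;=\; (\bar S_{\max}\alpha+\bar T_{\max}\alpha,\,\phi)_\H,
\]
which shows $\alpha\in\Dom((T+S)^*)$ with $(T+S)^*\alpha=\bar T_{\max}\alpha+\bar S_{\max}\alpha$. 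The essential self-adjointness of $T+S$ is exactly the statement that $(T+S)^*=\overline{T+S}$, so $\alpha\in\Dom(\overline{T+S})$ and the formula holds.

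Combining these two inclusions with the trivial one $\Dom(\bar T_{\min})\cap\Dom(\bar S_{\min})\subseteq\Dom(\bar T_{\max})\cap\Dom(\bar S_{\max})$ collapses all three domains to a single one, on which both asserted expressions for $\overline{T+S}$ are valid. The main obstacle is the reverse of the min inclusion: one cannot in general glue together a sequence approximating $\alpha$ in the $T$-graph norm with another approximating it in the $S$-graph norm, so the min side alone does not close up. Essential self-adjointness of $T+S$ is precisely what bypasses this difficulty, by letting us access $\Dom(\overline{T+S})$ from the max side where the separate boundedness estimates add cleanly.
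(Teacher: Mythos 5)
Your proof is correct and follows essentially the same route as the paper: the Pythagorean identity $\|(T+S)\phi\|^2=\|T\phi\|^2+\|S\phi\|^2$ gives $\Dom(\overline{T+S})\subseteq\Dom(\bar T_{\min})\cap\Dom(\bar S_{\min})$, the adjoint computation gives $\Dom(\bar T_{\max})\cap\Dom(\bar S_{\max})\subseteq\Dom((T+S)^*)$, and essential self-adjointness closes the cycle. The only difference is presentational (Cauchy sequences versus closure-in-graph-norm, and invoking $(T+S)^*=\overline{T+S}$ explicitly rather than via $\overline{(T+S)}_{\max}$), plus your closing paragraph explaining why the max side is needed is a useful remark the paper leaves implicit.
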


\begin{proof}
	Since $\Dom{\overline{(T+S)}}_{min}$ is the closure of $V$ under metric
	\[ (\phi,\phi)_\H+((T+S)\phi,(T+S)\phi)_\H=(\phi,\phi)_\H+(T\phi,T\phi)_\H+(S\phi,S\phi)_\H,\phi\in V, (**)\]
	Hence, $\Dom{\overline{(T+S)}}_{min}\subset\Dom{\bar{S}_{min}\cap\Dom\bar{T}_{min}}.$ Also, for any $\phi\in \Dom{(T+S)}_{min}$
	\[\overline{(T+S)}_{min}\phi=\lim_{n\rightarrow\infty} (T+S)\phi_n=\lim_{n\rightarrow\infty} T\phi_n+S\phi_n=\T_{min}\phi+\S_{min}\phi,\]
	where $\phi_n\in V\rightarrow\phi$ in the metric $(**)$.
	
	For each $\phi\in \Dom{\bar{S}_{max}\cap\Dom\bar{T}_{max}},$ $\psi\in V$,
	\begin{align*}
		&(\phi, (T+S)\psi)_\H=(\phi, T\psi)_\H+(\phi, S\psi)_\H\\
		&=(\bar\T_{max}\phi,\psi)_\H+(\bar\S_{max}\phi,\psi)_\H\\
		&\leq C\|\psi\|_{\H}.
	\end{align*}

	Therefore $\phi\in \Dom(\overline{(T+S)}_{max}),$ and $\overline{(T+S)}_{max}\phi=\bar\T_{max}\phi+\bar\S_{max}\phi,$ which means that $\Dom{\bar{S}_{min}\cap\Dom\bar{T}_{min}}\subset \Dom(\overline{(T+S)}_{max}).$
\end{proof}

\bibliography{lib}
\bibliographystyle{plain}
\end{document}